\numberwithin{equation}{section}
\newtheorem{defn}{Definition}[section]
\newtheorem{lem}{Lemma}[section]
\newtheorem{prop}{Proposition}[section]
\newtheorem{cor}{Corollary}[section]
\newcommand{\prob}[1]{\mathbb{P}\left(#1 \right)}
\newcommand{\probi}[2]{\mathbb{P}_{#1}\left( #2 \right)}
\newcommand{\probc}[2]{\mathbb{P}\left(#1 \ | \ #2 \right)}
\newcommand{\esp}[1]{\mathbb{E}\left[#1 \right]}
\newcommand{\som}[2]{\sum_{#1}^{#2}}
\newcommand{\f}[1]{ \widehat{f}_k( #1 ) }
\newcommand{\fe}[1]{ \widehat{f}_k^e(#1 ) }
\newcommand{\fep}[1]{ \widehat{f}_k^{e^\prime}(#1 ) }
\newcommand{\cb}[2]{(_{#2}^{#1})}
\newcommand{\gp}[1]{\left(#1\right)}
\newcommand{\ga}[1]{\left\{#1\right\}}
\newcommand{\gc}[1]{\left[#1\right]}
\newcommand{\floor}[1]{\left\lfloor #1 \right\rfloor}
\newcommand{\1}{\mathds{1}}
\newcommand{\ind}[1]{\1_{\ga{#1}}}
\newcommand{\paren}[1]{\left( #1 \right)}
\newcommand{\croch}[1]{\left[\, #1 \,\right]}
\newcommand{\acc}[1]{\left\{ #1 \right\}}
\newcommand{\norm}[1]{\left\| #1 \right\|}
\newcommand{\abs}[1]{\left\lvert #1 \right\rvert} 
\newcommand{\defegal}{:=} 
\newcommand{\iid}{\textit{i.i.d.}\ }
\newcommand{\F}{\mathcal{F}}
\newcommand{\R}{\mathbb{R}}
\newcommand{\N}{\mathbb{N}}
\newcommand{\E}{\mathbb{E}}
\renewcommand{\P}{\mathbb{P}}
\newcommand{\Var}{\mathrm{Var}}
\DeclareMathOperator{\card}{Card} 
\newcommand{\X}{\mathcal{X}}
\newcommand{\Rh}{\widehat{R}}
\newcommand{\Rhp}{\Rh_p}
\newcommand{\D}{\mathcal{D}}
\newcommand{\Dn}{\mathcal{D}_n}
\newcommand{\De}{\mathcal{D}^e}
\newcommand{\Deb}{\mathcal{D}^{\bar e}}
\newcommand{\A}{\mathcal{A}}
\newcommand{\eb}{\bar e}
\newcommand{\Enp}{\mathcal{E}_{n-p}}
\numberwithin{equation}{section}
\theoremstyle{plain}
\newtheorem{thm}{Theorem}[section]
\begin{document}

\title{Theoretical analysis of cross-validation for estimating the risk of the $k$-Nearest Neighbor classifier}
%
%
%

\author{Alain Celisse \\
Laboratoire de Math\'ematiques \\
\textsc{Modal}  Project-Team \\
UMR 8524 CNRS-Universit\'e Lille 1 \\
F-59\,655 Villeneuve d'Ascq Cedex, France\\
\texttt{celisse@math.univ-lille1.fr} \\
\AND
Tristan Mary-Huard\\
INRA, UMR 0320 / UMR 8120 G\'en\'etique V\'eg\'etale et \'Evolution\\
Le Moulon, F-91190 Gif-sur-Yvette, France\\
UMR AgroParisTech INRA MIA 518, Paris, France\\
16 rue Claude Bernard\\
F-75\,231 Paris cedex 05, France \\
\texttt{maryhuar@agroparistech.fr} \\
}


\editor{TBW}

\maketitle

\begin{abstract}
The present work aims at deriving theoretical guaranties on the behavior of some cross-validation procedures applied to the $k$-nearest neighbors ($k$NN) rule in the context of binary classification.
Here we focus on the leave-$p$-out cross-validation (L$p$O) used to assess the performance of the $k$NN classifier. Remarkably this L$p$O estimator can be efficiently computed in this context using closed-form formulas derived by \cite{CelisseMaryHuard11}.

We describe a general strategy to derive moment and exponential concentration inequalities for the L$p$O estimator applied to the $k$NN classifier.
%
%
Such results are obtained first by exploiting the connection between the L$p$O estimator and U-statistics, and second by making an intensive use of the generalized Efron-Stein inequality applied to the L$1$O estimator.
One other important contribution is made by deriving new quantifications of the discrepancy between the L$p$O estimator and the classification error/risk of the $k$NN classifier. The optimality of these bounds is discussed by means of several lower bounds as well as simulation experiments.
%
%

%
%
%

\end{abstract}

\begin{keywords}
Classification, Cross-validation, Risk estimation
\end{keywords}


\section{Introduction} \label{Section: Introduction}

The $k$-nearest neighbor ($k$NN) algorithm \citep{FixHodges51} in binary classification is a popular prediction algorithm based on the idea that the predicted value at a new point is based on a majority vote from the $k$ nearest labeled neighbors of this point.
Although quite simple, the $k$NN classifier has been successfully applied to many difficult classification tasks \citep{Li04,Simard98,Scheirer03}. Efficient implementations have been also developed to allow dealing with large datasets \citep{Indyk98,Andoni06}.

The theoretical performances of the $k$NN classifier have been already extensively investigated.
In the context of binary classification preliminary theoretical results date back to \cite{Cover_Hart:1967,Cover68,Gyorfi_1981}. More recently, \cite{Psaltis_Snapp_Venkatesh:1994,KulkPosner:1995} derived an asymptotic equivalent to the performance of the 1NN classification rule, further extended to $k$NN by \cite{Snapp_Venkatesh:1998}.
\cite{Hall_Park_Samworth:2008} also derived asymptotic expansions of the risk of the $k$NN classifier assuming either a Poisson or a binomial model for the training points, which relates this risk to the parameter $k$.
By contrast to the aforementioned results, the work by \cite{Chaudhuri_Dasgupta2014} focuses on the finite sample framework. They typically provide upper bounds with high probability on the risk of the $k$NN classifier where the bounds are not distribution-free.
Alternatively in the regression setting, \cite{KulkPosner:1995} provide a finite-sample bound on the performance of 1NN that has been further generalized to the $k$NN rule ($k\geq 1$) by \cite{Biau_Cerou_Guyader:2010}, where a bagged version of the $k$NN rule is also analyzed and then applied to functional data \cite{Biau_Cerou_Guyader:2010_function}.
We refer interested readers to \cite{BiauDevroye_2016} for an almost thorough presentation of known results on the $k$NN algorithm in various contexts.

\medskip

In numerous (if not all) practical applications, computing the cross-validation (CV) estimator \citep{Ston74,Sto:1982} has been among the most popular strategies to evaluate the performance of the $k$NN classifier \citep[][Section~24.3]{DeGyLu_1996}.
%
%
All CV procedures share a common principle which consists in splitting a sample of $n$ points into two disjoint subsets called \emph{training} and \emph{test} sets with respective cardinalities $n-p$ and $p$, for any $1\leq p \leq n-1$. The $n-p$ training set data serve to compute a classifier, while its performance is evaluated from the $p$ \emph{left out} data of the test set.
For a complete and comprehensive review on cross-validation procedures, we refer the interested reader to \cite{ArCe_2010_survey}.

In the present work, we focus on the leave-$p$-out (L$p$O) cross-validation.
Among CV procedures, it belongs to exhaustive strategies since it considers (and averages over) all the ${n\choose p}$ possible such splittings of $\acc{1,\ldots,n}$ into training and test sets.
Usually the induced computation time of the L$p$O is prohibitive, which gives rise to its surrogate called $V-$fold cross-validation (V-FCV) with $V\approx n/p$ \citep{Gei:1975}.
However, \cite{Steele_2009,CelisseMaryHuard11} recently derived closed-form formulas respectively for the bootstrap and the L$p$O procedures applied to the $k$NN classification rule. Such formulas allow one to efficiently compute the L$p$O estimator. Moreover since the V-FCV estimator suffers a larger variance than the L$p$O one \citep{CeRo08,ArCe_2010_survey}, L$p$O (with $p=\floor{n/V}$) strictly improves upon V-FCV in the present context.

\medskip

Although being favored in practice for assessing the risk of the $k$NN classifier,
the use of CV comes with very few theoretical guarantees regarding its performance.
Moreover probably for technical reasons, most existing results apply to Hold-out and leave-one-out (L1O), that is L$p$O with $p=1$ \citep{KeRo99}.
In this paper we rather consider the general L$p$O procedure (for $1\leq p \leq n-1$) used to estimate the risk (alternatively the classification error rate) of the $k$NN classifier. Our main purpose is then to provide distribution-free theoretical guarantees on the behavior of L$p$O with respect to influential parameters such as $p$, $n$, and $k$. For instance we aim at answering questions such as: ``Does it exist any regime of $p=p(n)$ (with $p(n)$ some function of $n$) where the L$p$O estimator is a consistent estimate of the risk of the $k$NN classifier?'', or ``Is it possible to describe the convergence rate of the L$p$O estimator with respect to $p/n$?''
%
%
%

\paragraph{Contributions.}
The main contribution of the present work is two-fold: $(i)$ we describe a new general strategy to derive moment and exponential concentration inequalities for the L$p$O estimator applied to the $k$NN binary classifier, and $(ii)$ these inequalities serve to derive the convergence rate of the L$p$O estimator towards the risk of the $k$NN classifier.

This new strategy relies on several steps.
First exploiting the connection between the L$p$O estimator and U-statistics \citep{Ko_Bo:1994} and the Rosenthal inequality \citep{IbragShar2002}, we prove that upper bounding the polynomial moments of the centered L$p$O estimator reduces to deriving such bounds for the simpler L1O estimator.
Second, we derive new upper bounds on the moments of the L1O estimator using the generalized Efron-Stein inequality \citep[][Theorem~15.5]{Bou_Bou_Lug_Mas:2005,BouLugMas_2013}.
Third, combining the two previous steps provides some insight on the interplay between $p/n$ and $k$ in the concentration rates measured in terms of moments.
This finally results in new exponential concentration inequalities for the L$p$O estimator applying whatever the value of the ratio $p/n \in (0,1) $. In particular while the upper bounds increase with $1\leq p\leq n/2+1$, it is no longer the case if $p>n/2+1$.
We also provide several lower bounds suggesting our upper bounds cannot be improved in some sense in a distribution-free setting. 

\medskip

The remainder of the paper is organized as follows.
The connection between the L$p$O estimator and $U$-statistics is clarified in Section~\ref{Section: Context}, where we also recall the closed-form formula of the L$p$O estimator \citep{CelisseMaryHuard11} applied to the $k$NN classifier. Order-$q$ moments ($q\geq 2$) of the L$p$O estimator are then upper bounded in terms of those of the L1O estimator. This step can be applied to any classification algorithm. 
Section~\ref{Section: Polynomial} then specifies the previous upper bounds in the case of the $k$NN classifier, which leads to the main Theorem~\ref{prop.moment.upper.bounds.Lpo} characterizing the concentration behavior of the L$p$O estimator with respect to $p$, $n$, and $k$ in terms of polynomial moments.
Deriving exponential concentration inequalities for the L$p$O estimator is the main concern of Section~\ref{Section: Deviations} where we highlight the strength of our strategy by comparing our main inequalities with concentration inequalities derived with less sophisticated tools.
Finally Section~\ref{Section:Old} exploits the previous results to bound the gap between the L$p$O estimator and the classification error of the $k$NN classifier. The optimality of these upper bounds is first proved in our distribution-free framework by establishing several new lower bounds matching the upper ones in some specific settings. Second, empirical experiments are also reported which support the above conclusions.

\section{$U$-statistics and L$p$O estimator} \label{Section: Context}


\subsection{Statistical framework}

\paragraph{Classification}
We tackle the binary classification problem where the goal is to predict the unknown label $Y\in \{0, 1\}$ of an observation $X\in \mathcal{X} \subset \R^d$. The random variable $(X,Y)$ has an \emph{unknown} joint distribution $P_{(X,Y)}$ defined by $P_{(X,Y)}(B) = \P\croch{ (X,Y)\in B}$ for any Borelian set $B \in  \X\times \acc{0,1}$, where $\P$ denotes a probability distribution. In what follows no particular distributional assumption is made regarding $X$.
To predict the label, one aims at building a classifier $\hat f:\mathcal{X}\rightarrow\{0,1\}$ on the basis of a set of random variables $ \D_n = \acc{Z_1,\ldots,Z_n}$ called the training sample, where $Z_i=(X_i,Y_i),\ 1\leq i\leq n$ represent $n$ copies of $(X,Y)$ drawn independently from $P_{(X,Y)}$. In settings where no confusion is possible, we will replace $\D_n $ by $\D$.

Any strategy to build such a classifier is called a \emph{classification algorithm} or \emph{classification rule}, and can be formally defined as a function $\A:\ \cup_{n\geq1} \acc{\X\times \acc{0,1}}^n \to \F$ that maps a training sample $\D_n$ onto the corresponding classifier $ \A^{\D_n}\paren{ \cdot}= \hat f \in \F$, where $\F$ is the set of all measurable functions from $\X$ to $\acc{0,1}$.
%
%
Numerous classification rules have been considered in the literature and it is out of the scope of the present paper to review all of them (see \cite{DeGyLu_1996} for many instances).
Here we focus on the $k$-nearest neighbor rule ($k$NN) initially proposed by \cite{FixHodges51} and further studied for instance by \cite{Dev_Wag:1977,RogersWagner78}.

\paragraph{The $k$NN algorithm}
For $1\leq k\leq n$, the $k$NN rule, denoted by $\A_k$, consists in classifying any new observation $x$ using a \emph{majority vote} decision rule based on the label of the $k$  points $X_{(1)}(x),\ldots,X_{(k)}(x)$ closest to $x$ among the training sample $X_{1},\ldots,X_n$. 
In what follows these $k$ \emph{nearest neighbors} are chosen according to the distance associated with the usual Euclidean norm in $\R^d$.
Note that other \emph{adaptive metrics} have been also considered in the literature \citep[see for instance][Chap.~14 ]{Has_Tib_Fri:2001}. But such examples are out of the scope of the present work that is, our reference distance does not depend on the training sample at hand.
Let us also emphasize that possible ties are broken by using the \emph{smallest index} among ties, which is one possible choice for the Stone lemma to hold true \citep[][Lemma~10.6, p.125]{BiauDevroye_2016}.

Formally, given $V_k(x) = \acc{1\leq i\leq n,\ X_i \in \acc{X_{(1)}(x),\ldots,X_{(k)}(x)}}$ the set of indices of the $k$ nearest neighbors of $x$ among $X_1,\ldots,X_n$, the kNN classification rule is defined by
\begin{eqnarray} \label{def.knn.classifier}
\A_k(\D_n;x) = \widehat{f}_k(\D_n;x) \defegal
\left\{
\begin{tabular}{cl}
1 &, if $\frac{1}{k}\sum_{i \in V_k(x)} Y_i =\frac{1}{k}\sum_{i =1}^k Y_{(i)}(x) > 0.5$ \\
0 &, if $\frac{1}{k}\sum_{i =1}^k Y_{(i)}(x) < 0.5$\\
\mbox{$\mathcal{B}$}(0.5) &, \mbox{otherwise}
\end{tabular}
\right. \ \ ,
\end{eqnarray}
where $Y_{(i)}(x)$ is the label of the $i$-th nearest neighbor of $x$ for $1\leq i \leq k$, and $\mathcal{B}(0.5)$ denotes a Bernoulli random variable with parameter 1/2.

\paragraph{Leave-$p$-out cross-validation}
For a given sample $\D_n$, the performance of any classifier $\hat f=\A^{\Dn}(\cdot)$ (respectively of any classification algorithm $\AA$) is assessed by the classification error $L(\hat f)$ (respectively the risk $R(\hat f)$) defined by
\begin{eqnarray*}
L(\hat{f}) = \probc{\hat{f}(X) \neq Y}{\D_n}\ , \quad \mbox{and}\qquad R(\hat{f}) = \E\croch{\probc{\hat{f}(X) \neq Y}{\D_n}}  .
\end{eqnarray*}
In this paper we focus on the estimation of $L(\hat f)$ (and its expectation $R(\hat f)$) by use of the \emph{Leave-$p$-Out} (L$p$O) cross-validation for $1\leq p\leq n-1$ \citep{Zha:1993,CeRo08}.
L$p$O successively considers all possible splits of $\D_n$ into a training set of cardinality $n-p$ and a test set of cardinality $p$.
Denoting by $\Enp$
the set of all possible subsets of $\acc{1,\ldots,n}$ with cardinality $n-p$, any $e\in\Enp$ defines a split of $\D_n$ into a training sample $\De=\acc{Z_i \mid i\in e}$ and a test sample $\Deb$, where $\bar e = \acc{1,\ldots,n}\setminus e$.
For a given classification algorithm $\A$, the final L$p$O estimator of the performance of $\A^{\D_n}(\cdot) = \widehat f$ is the average (over all possible splits) of the classification error estimated on each test set, that is
\begin{eqnarray}\label{def.Lpo.estimator}
\widehat{R}_{p}( \A,\D_n) = {n\choose p}^{-1}\som{e\in\mathcal{E}_{n-p}}{} \paren{\frac{1}{p}\som{i\in\bar{e}}{}\ind{ \A^{\De}\paren{X_i}\neq Y_i} } ,
\end{eqnarray}
where $\A^{\De}\paren{ \cdot}$ is the classifier built from $\De$.
We refer the reader to \cite{ArCe_2010_survey} for a detailed description of L$p$O and other cross-validation procedures.
In the sequel, the lengthy notation $\widehat{R}_{p}( \A,\D_n) $ is replaced by $\widehat{R}_{p,n}$ in settings where no confusion can arise about the algorithm $\A$ or the training sample $\Dn$, and by  $\widehat{R}_{p}(\D_n) $ if the training sample has to be kept in mind.

\paragraph{Exact L$p$O for the $k$NN classification algorithm}
Usually due to its seemingly prohibitive computational cost, L$p$O is not
applied except with $p = 1$ where it reduces to the well known leave-one-out.
However unlike this widespread idea \cite{CeRo08,Cel:2008:phd,Celi_2014} proved that the L$p$O estimator can be efficiently computed by deriving closed-form formulas in several statistical frameworks.  
The $k$NN classification rule is another instance for which efficiently computing the L$p$O estimator is possible with a time complexity linear in $p$ as previously established by \cite{CelisseMaryHuard11}.
Let us briefly recall the main steps leading to the closed-form formula.
\begin{enumerate}
\item From Eq.~\eqref{def.Lpo.estimator} the L$p$O estimator can be expressed as a sum (over the $n$ observations of the complete sample) of probabilities:
\begin{eqnarray*}
{n\choose p}^{-1}\sum_{e\in \mathcal{E}_{n-p}}\frac{1}{p} \paren{ \sum_{i \notin e} \ind{ \A^{\De}\gp{X_i}\neq Y_i} } &=& \frac{1}{p}\sum_{i=1}^n \croch{ {n\choose p}^{-1}\sum_{e\in\mathcal{E}_{n-p}} \ind{ \A^{\De}\gp{X_i}\neq Y_i} \ind{ i \notin e} } \\
&=& \frac{1}{p}\sum_{i=1}^n \P_e(\A^{\De}\gp{X_i} \neq Y_i \mid i \notin e)\P_e( i \notin e) .
\end{eqnarray*}
Here $\P_e$ means that the integration is computed with respect to the  random variable $e \in \mathcal{E}_{n-p}$, which follows the uniform distribution over the ${n\choose p}$ possible subsets with cardinality $n-p$ in $\mathcal{E}_{n-p}$.
For instance $\P_e( i \notin e) = p/n$ since it is the proportion of subsamples with cardinality $n-p$ which do not contain a given prescribed index $i$, which equals ${n-1\choose n-p}/{n\choose p}$. (See also Lemma~\ref{BasicResultForResamplingKnn} for further examples of such calculations.)

\item For any $X_i$, let $X_{(1)},...,X_{(k+p-1)},X_{(k+p)},...,X_{(n-1)}$ be the ordered sequence of neighbors of $X_i$. 
This list depends on $X_i$, i.e. $X_{(1)}$ should be noted $X_{(i,1)}$, but this dependency is skipped here for the sake of readability. 

The key in the derivation is to condition with respect to the random variable $R_{k}^i$ which denotes the rank (in the whole sample $\D_n$) of the $k-$th neighbor of $X_i$ in the $\De$, that is $R_k^i=j$ means that $X_{(j)}$ is the $k$-th neighbor of $X_i$ in $\De$.
%
%
Then
\begin{eqnarray*}
\P_e(\A^{\De}\gp{X_i} \neq Y_i|i \notin e) = \sum_{j=k}^{k+p-1} \P_e(\A^{\De}\gp{X_i}\neq Y_i \mid R_k^i=j, \ i \notin e)\P_e(R_k^i=j \mid i \notin e) ,
\end{eqnarray*}
where the sum involves $p$ terms since only $X_{(k)},\dots,X_{(k+p-1)}$ are candidates for being the $k$-th neighbor of $X_i$ in at least one training subset $e$.

\item Observe that the resulting probabilities can be easily computed (see Lemma~\ref{BasicResultForResamplingKnn}):
\begin{equation*}
\begin{array}{l}
\star \ \P_e(i \notin e) = \frac{p}{n}\\
\star \ \P_e(R_k^i=j| i \notin e) = \frac{k}{j}P\left(U=j-k\right)   \\
\star \ \P_e(\A^{\De}\gp{X_i} \neq Y_i| V_k^i=j, \ i \notin e) =
(1-Y_j) \croch{1-F_H\left(\frac{k+1}{2}\right) } +
Y_j \croch{ 1-F_{H'}\paren{ \frac{k-1}{2} } } ,
\end{array}
\end{equation*}
with $U \sim  \mathcal{H}(j,n-j-1,p-1)$, $H \sim \mathcal{H}(N_i^j,j-N_i^j-1,k-1)$, and $H' \sim \mathcal{H}(N_i^j-1,j-N_i^j,k-1)$, where $\mathcal{H}$ denotes the hypergeometric distribution and $N_i^j$ is the number of 1's among the $j$ nearest neighbors of $X_i$ in $\D_n$.
\end{enumerate}
The computational cost of L$p$O for the $k$NN classifier is the same as that of L$1$O for the $(k+p-1)$NN classifier whatever $p$, that is $O(p\, n)$. 
This contrasts with the usual ${n\choose p}$ prohibitive computational complexity seemingly suffered by L$p$O.


\subsection{$U$-statistics: General bounds on L$p$O moments}

The purpose of the present section is to describe a general strategy allowing to derive new upper bounds on the polynomial moments of the L$p$O estimator.
As a first step of this strategy, we establish the connection between the L$p$O risk estimator and U-statistics.
Second, we exploit this connection to derive new upper bounds on the order-$q$ moments of the L$p$O estimator for $q\geq2$. Note that these upper bounds, which relate moments of the L$p$O estimator to those of the L1O estimator, hold true with any classifier.

Let us start by introducing $U$-statistics and recalling some of their basic properties that will serve our purposes. For a thorough presentation, we refer to the books by \cite{Serf:1980,Ko_Bo:1994}.
The first step is the definition of a $U$-statistic of order $m\in\N^*$ as an average over all $m$-tuples of distinct indices in $\acc{1,\ldots,n}$.
\begin{defn}[\cite{Ko_Bo:1994}] \label{def.Ustatistic}
Let $h:\ \X^m \longrightarrow \R $ (or $\R^k$) denote any Borelian function where $m\geq 1$ is an integer.
Let us further assume $h$ is a symmetric function of its arguments.
Then any function $U_n:\ \X^n \longrightarrow \R$ such that
\begin{align*}
  U_n(x_1, \ldots,x_n) = U_n(h)(x_1, \ldots,x_n) = { n\choose m}^{-1} \sum_{1\leq i_1<\ldots<i_m\leq n} h\paren{ x_{i_1},\ldots, x_{i_m} }
\end{align*}
where $m\leq n$, is a $U$-statistic of order $m$ and kernel $h$.
\end{defn}
Before clarifying the connection between L$p$O and $U$-statistics, let us introduce the main property of $U$-statistics our strategy relies on.
It consists in representing any U-statistic as an average, over all permutations, of sums of independent variables.
\begin{prop}[Eq.~(5.5) in~\cite{Hoef:1963}]\label{Prop : HoeffdingDecomposition}
  With the notation of Definition~\ref{def.Ustatistic}, let us define $W:\ \X^n \longrightarrow \R$ by
\begin{align}\label{eq.Ustatistics.sum.independent}
  W(x_1,\ldots,x_n) = \frac{1}{r} \sum_{j=1}^r h\paren{ x_{(j-1)m+1},\ldots,x_{jm} } ,
\end{align}
where $r = \floor{n/m}$ denotes the integer part of $n/m$.
Then
\begin{align*}
  U_n(x_1, \ldots,x_n) = \frac{1}{n!} \sum_{\sigma}
W\paren{ x_{\sigma(1)},\ldots,x_{\sigma(n)} } ,
\end{align*}
where $\sum_{\sigma}$ denotes the summation over all permutations $\sigma$ of $\acc{1,\ldots,n}$.
\end{prop}

\medskip

We are now in position to state the key remark of the paper. All the developments further exposed in the following result from this connection between the L$p$O estimator defined by Eq.~\eqref{def.Lpo.estimator} and $U$-statistics.
\begin{thm}\label{theorem.lpo.ustat}
For any classification rule $\A$ and any $1\leq p \leq n-1$ such that the following quantities are well defined, the L$p$O estimator $\Rh_{p,n}$ is a U-statistic of order $m=n-p+1$ with kernel $h_m: \ \X^m \longrightarrow \R$ defined by
\begin{align*}
  h_m( Z_1,\ldots,Z_m ) = \frac{1}{m} \sum_{i=1}^m \1_{ \acc{\A^{ \D_m^{(i)} }( X_i) \neq Y_i } } ,
\end{align*}
where $\D_m^{(i)}$ denotes the sample $\D_m=(Z_1,\ldots,Z_m)$ with $Z_i$ withdrawn.
\end{thm}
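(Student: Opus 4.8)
The plan is to show that the two averages---the L$p$O estimator of~\eqref{def.Lpo.estimator} and the $U$-statistic $U_n(h_m)$ associated with the kernel $h_m$ for $m=n-p+1$---are literally the same sum after reindexing. The observation to make first is that, restricted to any $m$-subset of the data, the kernel $h_m$ is nothing but the leave-one-out estimator computed on that subsample: each summand $\1_{\{\hat f(Z_{1,m}^{(i)};X_i)\neq Y_i\}}$ trains the rule on the $m-1=n-p$ remaining points and tests on the single withheld point $Z_i$, and $h_m$ averages these $m$ contributions. Since $m-1=n-p$, the training sets appearing inside $h_m$ have exactly the cardinality of those indexed by $\mathcal{E}_{n-p}$.

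First I would write the $U$-statistic out explicitly, summing over $m$-subsets $S\subset\acc{1,\ldots,n}$,
\[
U_n(h_m)(Z_{1,n}) = \binom{n}{m}^{-1}\sum_{|S|=m}\frac{1}{m}\sum_{j\in S}\1_{\{\hat f(Z^{S\setminus\{j\}};X_j)\neq Y_j\}} .
\]
The combinatorial heart of the argument is to replace the summation over pairs $(S,j)$ with $|S|=m$ and $j\in S$ by the summation over pairs $(e,j)$ with $e=S\setminus\{j\}$, that is over all training sets $e$ of cardinality $n-p$ together with a test index $j\notin e$. This is a genuine bijection, since $S=e\cup\{j\}$ is recovered uniquely, and the inner summand is left untouched. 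Hence
\[
U_n(h_m)(Z_{1,n}) = \binom{n}{m}^{-1}\frac{1}{m}\sum_{e\in\mathcal{E}_{n-p}}\ \sum_{j\in\bar e}\1_{\{\hat f(Z^{e};X_j)\neq Y_j\}} ,
\]
which carries exactly the double sum of~\eqref{def.Lpo.estimator}, once one notes that $j\in\bar e$ is the same as $j\notin e$.

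It then remains only to match the normalizing constants, i.e.\ to verify $\binom{n}{m}^{-1}/m=\binom{n}{p}^{-1}/p$, equivalently $p\binom{n}{p}=m\binom{n}{m}$. This identity is immediate once both sides are written as $n!/[(p-1)!\,(n-p)!]$, using $m-1=n-p$ and $n-m=p-1$. Combining the last display with~\eqref{def.Lpo.estimator} yields $U_n(h_m)(Z_{1,n})=\Rhp(\hat f)$, which is the claimed equality. Finally, to confirm that $h_m$ is an admissible kernel in the sense of Definition~\ref{def.Ustatistic}, I would record that it is symmetric in its $m$ arguments: because a classification rule (in particular $\A_k$) depends on its training sample only through the \emph{set} of observations, permuting the arguments of $h_m$ merely permutes the $m$ leave-one-out terms among themselves and leaves the sum invariant.

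I do not anticipate a real obstacle beyond careful bookkeeping; the whole content is the recognition that $h_m$ is a subsampled L1O together with the bijection $(S,j)\leftrightarrow(e,j)$. The only point deserving a word of care is this implicit permutation-invariance of $\A$, which is precisely what the caveat ``as long as the considered quantities remain well defined'' is meant to cover, and which guarantees that $h_m$ is a \emph{symmetric} kernel rather than merely a measurable one.
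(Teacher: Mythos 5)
Your argument is correct and is essentially the paper's own proof read in the opposite direction: the paper starts from \eqref{def.Lpo.estimator} and reindexes the pairs $(e,i)$ with $i\in\bar e$ as pairs $(e',i)$ with $e'=e\cup\{i\}\in\mathcal{E}_{n-p+1}$, which is exactly your bijection $(S,j)\leftrightarrow(e,j)$, and the normalization check $p\binom{n}{p}=m\binom{n}{m}$ is the same identity the paper uses implicitly in passing from $\binom{n}{p}^{-1}p^{-1}$ to $\bigl[(n-p+1)\binom{n}{n-p+1}\bigr]^{-1}$. Your explicit remark on the permutation-invariance of the rule, which makes $h_m$ a symmetric kernel, matches the observation the paper records immediately after its proof.
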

\begin{proof}[Proof of Theorem~\ref{theorem.lpo.ustat}]
  ~\\

From Eq.~\eqref{def.Lpo.estimator}, the L$p$O estimator of the performance of any classification algorithm $\A$ computed from $\D_n$ satisfies
\begin{align*}
\Rh_p(\A,\D_n) = \Rh_{p,n} & = \frac{1}{{n\choose p}}\sum_{e\in \Enp} \frac{1}{p} \sum_{i\in\bar e} \1_{\acc{ \A^{\De}\paren{X_i} \neq Y_i }} \\
& = \frac{1}{{n\choose p}}\sum_{e\in\mathcal{E}_{n-p}} \frac{1}{p} \sum_{i\in\bar e} \paren{\sum_{v\in \mathcal{E}_{n-p+1}}\1_{\acc{ v = e\cup \acc{i}} }} \1_{\acc{ \A^{\De}\paren{X_i} \neq Y_i }} ,
\end{align*}
since there is a unique set of indices $v$ with cardinality $n-p+1$ such that $v= e\cup \acc{i}$.
Then
\begin{align*}
\Rh_{p,n} 
& = \frac{1}{{n\choose p}}\sum_{v\in \mathcal{E}_{n-p+1}}  \frac{1}{p} \sum_{i=1}^n  \paren{ \sum_{e\in\mathcal{E}_{n-p}} \1_{\acc{ v = e\cup \acc{i}} }  \1_{\acc{i\in\bar e}} } \1_{\acc{ \A^{\D^{v\setminus \acc{i}}}\paren{ X_i} \neq Y_i }}.
\end{align*}
Furthermore for $v$ and $i$ fixed, $\sum_{e\in\mathcal{E}_{n-p}} \1_{\acc{ v = e\cup \acc{i}} }  \1_{\acc{i\in\bar e}} = \1_{\acc{ i\in v}}$ since there is a unique set of indices $e$ such that $e = v\setminus i$.
One gets
\begin{align*}
\Rh_{p,n} 
& = \frac{1}{p}\frac{1}{{n\choose p}}\sum_{v\in \mathcal{E}_{n-p+1}}   \sum_{i=1}^n  \1_{\acc{ i\in v}} \1_{\acc{ \A^{\D^{v\setminus \acc{i}}}\paren{ X_i} \neq Y_i }} \\
& =  \frac{1}{{n\choose n-p+1}} \sum_{v\in \mathcal{E}_{n-p+1}}   \frac{1}{n-p+1} \sum_{i \in v} \1_{\acc{ \A^{\D^{v\setminus \acc{i}}}\paren{ X_i} \neq Y_i }},
\end{align*}
by noticing $p {n\choose p} = \frac{ p n!}{ p! n-p!} = \frac{ n!}{ p-1! n-p!} = (n-p+1) {n\choose n-p+1} .$

\end{proof}
The kernel $h_m$ is a deterministic and symmetric function of its arguments that does only depend on $m$.
Let us also notice that $h_m\paren{Z_{1},\ldots, Z_{m} }$ reduces to the L1O estimator of the risk of the classification rule $\A$ computed from $Z_1,\ldots,Z_m$, that is
\begin{align}\label{eq.notation.LpO.Ustatistic}
  h_m\paren{Z_{1},\ldots, Z_{m} } = \Rh_1\paren{ \A,\D_m } = \Rh_{1,n-p+1} .
\end{align}
In the context of testing whether two binary classifiers have different error rates, this fact has already been pointed out by \cite{FHDBB:2013}.

\medskip

We now derive a general upper bound on the $q$-th moment ($q\geq 1$) of the L$p$O estimator that holds true for any classification rule as long as the following quantities remain meaningful.
\begin{thm}\label{Prop : upper.bound.lpo.loo}
For any classification rule $\A$, let $\A^{\D_n}(\cdot)$ and $\A^{\D_m}(\cdot)$ be the corresponding classifiers built from respectively $\Dn $ and $\D_{m}$, where $ m = n-p+1$.
Then for every $1\leq p\leq n-1$ such that the following quantities are well defined, and any $q\geq 1$,
\begin{eqnarray} \label{ineq.Lpo.Loo.moments.direct.quad}
& \esp{ \left| \Rh_{p,n} - \esp{ \Rh_{p,n} } \right|^q } \leq  \E\croch{ \abs{  \Rh_{1,m} - \E\croch{ \Rh_{1,m} } }^q } . \hfill
\end{eqnarray}
Furthermore as long as $p> n/2+1$, one also gets
\begin{itemize}
    \item for $q=2$
    \begin{align} \label{ineq.Lpo.Loo.moments.quad}
    \esp{ \left| \Rh_{p,n} - \esp{ \Rh_{p,n} } \right|^2 } \leq   \frac{  \esp{ \left| \Rh_{1,m} - \esp{ \Rh_{1,m} } \right|^2 }  }{\floor{ \frac{n}{m}}} \enspace\cdot \hfill
    \end{align}
    \item for every $q>2$
\begin{align} \label{ineq.Lpo.Loo.moments.larger.quad}
& \esp{ \left| \Rh_{p,n} - \esp{ \Rh_{p,n} } \right|^q } \leq  B(q,\gamma) \times  \hfill \nonumber \\
&\max\acc{ 2^{q}\gamma \floor{ \frac{n}{m}} \E\croch{ \abs{ \frac{ \Rh_{1,m} - \E\croch{ \Rh_{1,m} } }{\floor{ \frac{n}{m}}} }^q } , \paren{ \sqrt{ \frac{2
\Var\paren{  \Rh_{1,m} }}{ \floor{ \frac{n}{m}} } } }^q } , \hfill
\end{align}
where $\gamma>0$ is a numeric constant and $B(q,\gamma)$ denotes the optimal constant defined in the Rosenthal inequality (Proposition~\ref{prop.rosenthal.inequality}).
\end{itemize}
\end{thm}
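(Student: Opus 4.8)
The plan is to combine the $U$-statistic representation of Theorem~\ref{theorem.lpo.ustat} with the permutation decomposition of Proposition~\ref{Prop : HoeffdingDecomposition}, and then to exploit the independence of the blocks it produces. Writing $m=n-p+1$, Theorem~\ref{theorem.lpo.ustat} gives $\widehat{R}_p(\widehat{g}_n)=U_n(h_m)$ with $h_m(Z_1,\dots,Z_m)=\widehat{R}_1(\A(Z_{1,m};\cdot))$, and Proposition~\ref{Prop : HoeffdingDecomposition} with $r=\floor{n/m}$ rewrites it as $\widehat{R}_p(\widehat{g}_n)=\frac{1}{n!}\sum_\sigma W(Z_{\sigma(1)},\dots,Z_{\sigma(n)})$, where $W$ averages $r$ copies of $h_m$ over the disjoint blocks $\{Z_{(j-1)m+1},\dots,Z_{jm}\}$, $1\leq j\leq r$. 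The structural fact I would rely on is that these blocks are disjoint, so the terms $h_m^{(j)}$ are i.i.d.\ with the common law of $\widehat{R}_1(\widehat{g}_m)$, and $\esp{W_\sigma}=\esp{\widehat{R}_p(\widehat{g}_n)}$ for every $\sigma$ by exchangeability.

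For the general bound~\eqref{ineq.Lpo.Loo.moments.direct.quad} I would apply Jensen's inequality twice. Convexity of $t\mapsto|t|^q$ ($q\geq1$) on the permutation average gives $\esp{|\widehat{R}_p(\widehat{g}_n)-\esp{\widehat{R}_p(\widehat{g}_n)}|^q}\leq\frac{1}{n!}\sum_\sigma\esp{|W_\sigma-\esp{W_\sigma}|^q}=\esp{|W-\esp{W}|^q}$, the last equality by exchangeability. A second application of convexity to $W-\esp{W}=\frac{1}{r}\sum_{j=1}^r(h_m^{(j)}-\esp{h_m^{(j)}})$ yields $\esp{|W-\esp{W}|^q}\leq\frac{1}{r}\sum_{j=1}^r\esp{|h_m^{(j)}-\esp{h_m^{(j)}}|^q}=\esp{|\widehat{R}_1(\widehat{g}_m)-\esp{\widehat{R}_1(\widehat{g}_m)}|^q}$, which is~\eqref{ineq.Lpo.Loo.moments.direct.quad}. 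This argument uses only convexity and the common law of the blocks, hence holds for all $1\leq p\leq n-1$.

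The improved bounds exploit that $p>n/2+1$ forces $m<n/2$, hence $r=\floor{n/m}\geq2$ genuinely independent blocks. For $q=2$ I would keep the first Jensen step but replace the second by the exact variance of an i.i.d.\ centered average, $\esp{|W-\esp{W}|^2}=\Var(W)=r^{-1}\Var(\widehat{R}_1(\widehat{g}_m))$, which is~\eqref{ineq.Lpo.Loo.moments.quad}. For $q>2$ I would reduce similarly to $\esp{|W-\esp{W}|^q}=r^{-q}\,\esp{|\sum_{j=1}^r\xi_j|^q}$ with $\xi_j=h_m^{(j)}-\esp{h_m^{(j)}}$ i.i.d.\ and centered, and then apply the Rosenthal inequality of Proposition~\ref{prop.rosenthal.inequality} to $\sum_{j=1}^r\xi_j$. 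Since $\sum_j\esp{|\xi_j|^q}=r\,\esp{|\xi_1|^q}$ and $\sum_j\esp{\xi_j^2}=r\,\Var(\widehat{R}_1(\widehat{g}_m))$, pushing the prefactor $r^{-q}$ through the two branches (using $r^{-q}(r\,v)^{q/2}=(v/r)^{q/2}$ with $v=\Var(\widehat{R}_1(\widehat{g}_m))$) produces a term of order $r\,\esp{|\xi_1/r|^q}$ and one of order $(\Var(\widehat{R}_1(\widehat{g}_m))/r)^{q/2}$, which are exactly the two entries of the maximum in~\eqref{ineq.Lpo.Loo.moments.larger.quad}.

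I expect the last step to be the main obstacle: the bookkeeping needed so that the output of Proposition~\ref{prop.rosenthal.inequality} appears precisely as stated, i.e.\ with $2^q\gamma$ in front of the first term and a factor $\sqrt{2}$ inside the square root of the second. These numerical constants are dictated by the exact formulation of the Rosenthal inequality, and one must carefully follow the $r^{-q}$ factor through both branches of the maximum to check that the variance branch scales as $(\Var(\widehat{R}_1(\widehat{g}_m))/r)^{q/2}$ and not as $\Var(\widehat{R}_1(\widehat{g}_m))^{q/2}$. The remainder is a routine interplay between convexity and the i.i.d.\ block structure.
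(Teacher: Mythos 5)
Your treatment of \eqref{ineq.Lpo.Loo.moments.direct.quad} and \eqref{ineq.Lpo.Loo.moments.quad} is correct and coincides with the paper's proof: the same Hoeffding permutation decomposition, Jensen over permutations, then either a second Jensen step over the blocks or the exact variance computation for the i.i.d.\ block average. The gap is in the case $q>2$ with $p>n/2+1$. You propose to apply Proposition~\ref{prop.rosenthal.inequality} directly to the centered variables $\xi_j=h_m^{(j)}-\esp{h_m^{(j)}}$, but that proposition, as stated (Eq.~(20) in \cite{IbragShar2002} with the optimal constant $B(q,\gamma)$), requires the $X_i$ to have \emph{symmetric} distributions, and the $\xi_j$ are centered but not symmetric. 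The paper bridges this by a symmetrization step: it introduces $\zeta_j=h_m^{(j)}-h_m^{\prime(j)}$ with $Z'_1,\ldots,Z'_n$ independent copies of the data, notes that $\esp{\abs{\sum_j\bar h_m^{(j)}}^q}\leq\esp{\abs{\sum_j\zeta_j}^q}$ (Jensen, conditioning on the unprimed sample), and only then applies Rosenthal to the symmetric $\zeta_j$.

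This is not mere bookkeeping: the constants you flag as the ``main obstacle'' are exactly the footprint of the missing symmetrization. One has $\esp{\abs{\zeta_j}^q}\leq 2^q\esp{\abs{\bar h_m^{(j)}}^q}$ (whence the $2^q\gamma$ in front of the first branch) and $\esp{\zeta_j^2}=2\Var\paren{\widehat R_1(\widehat g_m)}$ (whence the factor $2$ inside the square root of the second branch). Your direct computation would yield $B(q,\gamma)\{\gamma\, r^{1-q}\esp{\abs{\xi_1}^q}\vee(\Var(\widehat R_1(\widehat g_m))/r)^{q/2}\}$, which is formally sharper than \eqref{ineq.Lpo.Loo.moments.larger.quad} but is not licensed by the cited inequality. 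To repair the argument you must either insert the symmetrization step (recovering the paper's proof and its constants verbatim) or invoke a version of Rosenthal's inequality valid for centered, non-symmetric variables, which would change $B(q,\gamma)$ and hence the constants propagated into Theorem~\ref{prop.moment.upper.bounds.Lpo}.
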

The proof is given in Appendix~\ref{Appendix: UpperBoundLpO.L1O}.
Eq.~\eqref{ineq.Lpo.Loo.moments.direct.quad} and Eq.~\eqref{ineq.Lpo.Loo.moments.quad} straightforwardly result from the Jensen inequality applied to the average over all permutations provided in Proposition~\ref{Prop : HoeffdingDecomposition}.
If $p>n/2+1$, the integer part $\floor{n/m}$ becomes larger than 1 and Eq.~\eqref{ineq.Lpo.Loo.moments.quad} becomes better than Eq.~\eqref{ineq.Lpo.Loo.moments.direct.quad} for $q=2$.
As a consequence of our strategy of proof, the right-hand side of Eq.~\eqref{ineq.Lpo.Loo.moments.quad} is equal to the classical upper bound on the variance of U-statistics which suggests it cannot be improved without adding further assumptions.

Unlike the above ones, Eq.~\eqref{ineq.Lpo.Loo.moments.larger.quad} is derived from the Rosenthal inequality, which enables us to upper bound a sum $\norm{\sum_{i=1}^r \xi_i }_q$ of independent and identically centered random variables in terms of $\sum_{i=1}^r \norm{\xi_i }_q$ and $\sum_{i=1}^r\Var(\xi_i)$.
Let us remark that, for $q=2$, both terms of the right-hand side of Eq.~\eqref{ineq.Lpo.Loo.moments.larger.quad} are of the same order as Eq.~\eqref{ineq.Lpo.Loo.moments.quad} up to constants.
Furthermore using the Rosenthal inequality allows us to take advantage of the integer part $\floor{n/m}$ when $p>n/2+1$, unlike what we get by using Eq.\eqref{ineq.Lpo.Loo.moments.direct.quad} for $q>2$.
In particular it provides a new understanding of the behavior of the L$p$O estimator when $p/n \to 1$ as highlighted later by Proposition~\ref{prop.exponential.inequality.p.large.explicit.deviations}.
%



\section{New bounds on L$p$O moments for the $k$NN classifier} \label{Section: Polynomial}

Our goal is now to specify the general upper bounds provided by Theorem~\ref{Prop : upper.bound.lpo.loo} in the case of the $k$NN classification rule $\A_k$ ($1\leq k\leq n$) introduced by~\eqref{def.knn.classifier}.

Since Theorem~\ref{Prop : upper.bound.lpo.loo} expresses the moments of the L$p$O estimator in terms of those of the L1O estimator computed from $\D_m$ (with $m=n-p+1$), the next step consists in focusing on the L1O moments.
Deriving upper bounds on the moments of the L1O is achieved using a generalization of the well-known Efron-Stein inequality (see Theorem~\ref{thm.efron.stein} for Efron-Stein's inequality and Theorem~15.5 in \cite{BouLugMas_2013} for its generalization).
For the sake of completeness, we first recall a corollary of this generalization that is proved in Section~\ref{subsec.generalized.efron.stein} (see Corollary~\ref{cor.generalized.Efron.Stein.appendix}).
\begin{prop}\label{cor.generalized.Efron.Stein} \sloppy
Let $Z_1,\ldots,Z_n$ denote $n$ independent random variables and $\zeta=f(Z_1,\ldots,Z_n)$, where $f:\ \R^n\rightarrow \R$ is any Borelian function.
With $Z'_1,\ldots,Z'_n$ independent copies of the $Z_i$s, there exists a universal constant $\kappa\leq 1.271$ such that for any $q\geq 2$,
\begin{align*}
	\norm{ \zeta-\E \zeta }_q & \leq \sqrt{2\kappa q} \sqrt{\norm{\sum_{i=1}^n \paren{ f(Z_1,\ldots,Z_n)-f(Z_1,\ldots,Z^\prime_i,\ldots,Z_n) }^2}_{q/2}} .
\end{align*}
\end{prop}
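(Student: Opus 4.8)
The plan is to deduce this statement from the moment version of the generalized Efron--Stein inequality (Theorem~15.5 in \cite{BouLugMas_2013}), which controls each tail of $Z-\E Z$ separately, and then to recombine the two one-sided bounds \emph{without losing any constant}. Write $\mathcal{G}=\sigma(X_1,\dots,X_n)$ and let $\E_i$ denote integration with respect to the single copy $X_i'$ (equivalently, conditioning on everything but $X_i'$). First I would introduce the one-sided quantities
\begin{align*}
V_+ = \sum_{i=1}^n \E_i\croch{ (Z-Z_i')_+^2 }, \qquad V_- = \sum_{i=1}^n \E_i\croch{ (Z-Z_i')_-^2 } ,
\end{align*}
for which the generalized Efron--Stein inequality provides, for every $q\geq 2$, the bounds $\norm{(Z-\E Z)_+}_q \leq \sqrt{2\kappa q}\,\norm{V_+}_{q/2}^{1/2}$ and the symmetric one for $(Z-\E Z)_-$ with $V_-$, where $\kappa=\sqrt{e}/(2(\sqrt{e}-1))\leq 1.271$.

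Next I would pass to the full $L^q$ norm. Since at every point exactly one of $(Z-\E Z)_+$ and $(Z-\E Z)_-$ is nonzero, one has the pointwise identity $\abs{Z-\E Z}^q = (Z-\E Z)_+^q + (Z-\E Z)_-^q$, whence
\begin{align*}
\norm{Z-\E Z}_q^q = \norm{(Z-\E Z)_+}_q^q + \norm{(Z-\E Z)_-}_q^q \leq (2\kappa q)^{q/2}\paren{ \norm{V_+}_{q/2}^{q/2} + \norm{V_-}_{q/2}^{q/2} } .
\end{align*}
The crucial point is that this recombination costs nothing. The elementary identity $x_+^2+x_-^2=x^2$ gives $V_++V_-=V$ with $V=\sum_{i=1}^n \E_i[(Z-Z_i')^2]$, and since $q/2\geq 1$ the superadditivity of $t\mapsto t^{q/2}$ on $\R_+$ yields $V_+^{q/2}+V_-^{q/2}\leq (V_++V_-)^{q/2}=V^{q/2}$ pointwise. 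Taking expectations gives $\norm{V_+}_{q/2}^{q/2}+\norm{V_-}_{q/2}^{q/2}\leq \norm{V}_{q/2}^{q/2}$, so that $\norm{Z-\E Z}_q \leq \sqrt{2\kappa q}\,\norm{V}_{q/2}^{1/2}$ with no spurious $2^{1/q}$ factor and exactly the constant $\kappa\leq 1.271$.

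It then remains to replace the conditional quantity $V$ by the unconditional $\sum_{i=1}^n (Z-Z_i')^2$ appearing in the statement. Because the copies $X_1',\dots,X_n'$ are independent of $\mathcal{G}$ and of one another, each $\E_i$ acting on $(Z-Z_i')^2$ coincides with the $\mathcal{G}$-conditional expectation of that summand, so $V=\E\croch{ \sum_{i=1}^n (Z-Z_i')^2 \,\big|\, \mathcal{G} }$. Conditional Jensen applied to the convex map $t\mapsto t^{q/2}$ (valid since $q/2\geq 1$) gives $\norm{V}_{q/2}\leq \norm{ \sum_{i=1}^n (Z-Z_i')^2 }_{q/2}$, and chaining the three inequalities produces the claim.

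I expect the delicate step to be the lossless recombination of the two tails. The naive route $\norm{Z-\E Z}_q\leq \norm{(Z-\E Z)_+}_q+\norm{(Z-\E Z)_-}_q$, or equally the crude domination $V_\pm\leq V$ followed by summing, both leak a factor of order $2^{1/q}$ and so fail to reproduce the sharp constant $\kappa\leq 1.271$. The fix is to keep the \emph{exact} decomposition $V_++V_-=V$ and combine through the pointwise identity $\abs{Z-\E Z}^q=(Z-\E Z)_+^q+(Z-\E Z)_-^q$ together with the superadditivity $V_+^{q/2}+V_-^{q/2}\leq V^{q/2}$; everything else (rewriting $V$ as a single conditional expectation and applying Jensen) is routine.
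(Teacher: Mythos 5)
Your proof is correct and follows essentially the same route as the paper's own argument (Corollary~\ref{cor.generalized.Efron.Stein.appendix}): the pointwise identity $\abs{Z-\E Z}^q=(Z-\E Z)_+^q+(Z-\E Z)_-^q$, the two one-sided bounds from Theorem~15.5 of \cite{BouLugMas_2013}, the lossless recombination via $V_++V_-=V$ and superadditivity of $t\mapsto t^{q/2}$, and a final conditional Jensen step to drop the conditioning. The only difference is cosmetic: you spell out the Jensen step that the paper leaves implicit.
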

Then applying Proposition~\ref{cor.generalized.Efron.Stein} with $ \zeta = \Rh_1(\A_k,\D_m) = \Rh_{1,m}$ (L$1$O estimator computed from $\D_m$ with $m=n-p+1$) leads to the following Theorem~\ref{prop.moment.upper.bounds.Loo}, which finally allows us to control the order-$q$ moments of the L1O estimator applied to the $k$NN classifier.
\begin{thm}\label{prop.moment.upper.bounds.Loo} For every $1\leq k\leq n-1$, let $A_k^{\D_m}$ ($m=n-p+1$) denote the $k$NN classifier learnt from $\D_m$ and $\Rh_{1,m} $ be the corresponding L$1$O estimator given by Eq.~\eqref{def.Lpo.estimator}.
Then 
\begin{itemize}
    \item for $q=2$,
    \begin{eqnarray}\label{ineq.variance.generalized.efron.stein.loo.q2}
    \E\croch{ \paren{\Rh_{1,m} - \E\croch{\Rh_{1,m}} }^2}  \leq C_1   \frac{ k^{3/2}}{m} \enspace ;
    \end{eqnarray}
    \item for every $q>2$,
    \begin{eqnarray}\label{ineq.variance.generalized.efron.stein.loo.qlarger2}
    \E\croch{ \abs{ \Rh_{1,m} - \E\croch{ \Rh_{1,m} } }^{q} }
    & \leq   (C_2 \cdot k )^{q} \paren{ \frac{q}{m} }^{q/2}   ,
    \end{eqnarray}
\end{itemize}
with $C_1 =  2+16\gamma_d $ and $C_2 = 4\gamma_d\sqrt{2\kappa}$, where $\gamma_d$ is a constant (arising from Stone's lemma, see Lemma~\ref{Stone}) that grows exponentially with dimension $d$, and $\kappa$ is defined in Proposition~\ref{cor.generalized.Efron.Stein}.
\end{thm}
Its proof (detailed in Section~\ref{Appendix: BorneMomentLpO.kNN}) involves the use of Stone's lemma (Lemma~\ref{Stone}), which upper bounds, for a given $X_i$, the number of points in $\D_n^{ (i) }$ having $X_i$ among their $k$ nearest neighbors by $k\gamma_d$.
The dependence of our upper bounds with respect to $\gamma_d$ (see explicit constants $C_1$ and $C_2$) induces their strong deterioration as the dimension $d$ grows since $\gamma_d \approx 4.8^d-1$. Therefore the larger the dimension $d$, the larger the required sample size $n$ for the upper bound to be small (at least smaller than 1).
Note also that the tie breaking strategy (based on the smallest index) is chosen so that it ensures Stone's lemma to hold true.

In Eq.~\eqref{ineq.variance.generalized.efron.stein.loo.q2}, the easier case $q=2$  enables to exploit exact calculations (rather than upper bounds) of the variance of the L1O.
Further noticing $\E\croch{ \Rh_{1,m} }= R\paren{ A_k^{\D_{n-p}}}$ (risk of the $k$NN classifier learnt from $\D_{n-p}$), the resulting $k^{3/2}/m$ rate is a strict improvement upon the usual upper bound in $k^2/m$ which is derived from using the sub-Gaussian exponential concentration inequality provided by Theorem~24.4 in \cite{DeGyLu_1996}.

By contrast the larger $k^q$ in Eq.~\eqref{ineq.variance.generalized.efron.stein.loo.qlarger2} comes from the difficulty to derive a tight upper bound with $q>2$ for the expectation of $ (\sum_{i =1}^n \ind{ A_k^{\D_m^{(i)}}\paren{X_i} \neq A_k^{\D_m^{(i,j)}}\paren{X_i} }  )^q$, where $\D_m^{(i)}$ (resp. $\D_m^{(i,j)}$) denotes the sample $\D_m$ where $Z_i$ has been (resp. $Z_i$ and $Z_j$ have been) removed.

\bigskip

We are now in position to state the main result of this section.
It follows from the combination of Theorem~\ref{Prop : upper.bound.lpo.loo} (connecting moments of the L$p$O estimator to those of the L1O) and Theorem~\ref{prop.moment.upper.bounds.Loo} (providing an upper bound on the order-$q$ moments of the L1O).
\begin{thm}\label{prop.moment.upper.bounds.Lpo} For every $p,k\geq 1$  such that $p+k \leq n$, let $\Rh_{p,n}$ denote the L$p$O risk estimator (see \eqref{def.Lpo.estimator}) of the $k$NN classifier $ \A_k^{\Dn}\paren{\cdot}$ defined by \eqref{def.knn.classifier}.
Then there exist (known) constants $C_1,C_2>0$ such that for every $1\leq p\leq n-k$,
\begin{itemize}
    \item for $q=2$,
    \begin{eqnarray}\label{ineq.variance.generalized.efron.stein}
    \E\croch{ \paren{\Rh_{p,n} - \E\croch{\Rh_{p,n}} }^2}  \leq   C_1  \frac{ k^{3/2}}{ (n-p+1) }   \enspace ;
    \end{eqnarray}
    \item for every $q>2$,
        \begin{eqnarray}\label{ineq.p.small.generalized.efron.stein}
        & \E\croch{ \abs{ \Rh_{p,n} - \E\croch{\Rh_{p,n}} }^{q} }
        \leq   \paren{C_2 k}^q \paren{ \frac{q}{n-p+1}  }^{q/2}   ,
        \end{eqnarray}
\end{itemize}
with $C_1 =  \frac{128\kappa\gamma_d}{\sqrt{2\pi}}$ and $C_2 = 4\gamma_d\sqrt{2\kappa}$, where $\gamma_d$ denotes the constant arising from Stone's lemma (Lemma~\ref{Stone}).
Furthermore in the particular setting where $n/2+1 < p \leq n-k$, then
    \begin{itemize}
        \item  for $q=2$,
    \begin{eqnarray}\label{ineq.p.large.variance.generalized.efron.stein}
    \E\croch{ \paren{\Rh_{p,n} - \E\croch{\Rh_{p,n}} }^2}  \leq   C_1  \frac{ k^{3/2} }{ (n-p+1)\floor{ \frac{n}{n-p+1} } }  \enspace ,
    \end{eqnarray}
    \item for every $q>2$,

\begin{eqnarray}\label{ineq.p.large.generalized.efron.stein}
&& \E\croch{ \abs{ \Rh_{p,n} - \E\croch{\Rh_{p,n}} }^{q} } \hspace*{8.5cm} \nonumber \\
&\leq & \floor{\frac{n}{n-p+1} } \Gamma^q  \max\paren{  \frac{k^{3/2} }{\paren{n-p+1}\floor{\frac{n}{n-p+1}} } \, q  ,    \frac{k^2}{ (n-p+1) \floor{\frac{n}{n-p+1}}^2} \, q^{3} }^{q/2} ,
\end{eqnarray}
where $\Gamma =2\sqrt{2e} \max\paren{    \sqrt{ 2 C_1  } ,  2 C_2 }$.
    \end{itemize}
\end{thm}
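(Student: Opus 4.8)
The plan is to read off all four displays by substituting the L1O moment estimates of Theorem~\ref{prop.moment.upper.bounds.Loo} into the transfer inequalities of Theorem~\ref{Prop : upper.bound.lpo.loo}, taking $m=n-p+1$ throughout. First I would check that the constraint $p+k\leq n$ is exactly what makes everything well defined: it forces $m=n-p+1\geq k+1$, so that the $k$NN classifier $\hat f_{k,m}=\A_k(Z_{1,m};\cdot)$ genuinely uses $k$ neighbours and Theorem~\ref{prop.moment.upper.bounds.Loo} applies with $\widehat g_m=\hat f_{k,m}$. This places us precisely in the range $1\leq p\leq n-k$ of the statement.

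For the two bounds valid on the whole range $1\leq p\leq n-k$ I would use the coefficient-one inequality \eqref{ineq.Lpo.Loo.moments.direct.quad}. With $q=2$ it gives $\E\croch{\paren{\Rhp(\hat f_k)-\E\croch{\Rhp(\hat f_k)}}^2}\leq \Var\paren{\Rh_1(\hat f_{k,m})}$, and inserting the L1O variance bound of Theorem~\ref{prop.moment.upper.bounds.Loo}, rewritten as $C_1\,k\sqrt k/m$, yields \eqref{ineq.variance.generalized.efron.stein} once $m=n-p+1$ is substituted. For $q>2$, combining \eqref{ineq.Lpo.Loo.moments.direct.quad} with the L1O bound $(C_2\sqrt q)^q(k/\sqrt m)^q$ gives $\E\croch{\abs{\Rhp(\hat f_k)-\E\croch{\Rhp(\hat f_k)}}^q}\leq\paren{C_2\sqrt{k^2/m}\,q^{1/2}}^q$, which is exactly \eqref{ineq.p.small.generalized.efron.stein}; note that the constant $C_2=4\gamma_d\sqrt{2\kappa}$ is untouched, since the transfer step costs nothing in this case.

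In the regime $n/2+1<p\leq n-k$, equivalently $\floor{n/m}\geq 2$, I would switch to the refined inequalities. For $q=2$ I would combine \eqref{ineq.Lpo.Loo.moments.quad} with the same L1O variance bound; the extra factor $\floor{n/m}^{-1}$ simply lands in the denominator and produces \eqref{ineq.p.large.variance.generalized.efron.stein} with the same $C_1$. The genuinely involved case is $q>2$: starting from the Rosenthal-type bound \eqref{ineq.Lpo.Loo.moments.larger.quad}, I would substitute $\Var\paren{\Rh_1(\hat f_{k,m})}\leq C_1 k\sqrt k/m$ into its variance branch and $(C_2\sqrt q)^q(k/\sqrt m)^q$ into its $q$-th moment branch, and then rewrite the two branches, after pulling out the prefactor $\floor{n/m}$, as $\paren{\sqrt{k\sqrt k/(m\floor{n/m})}}^q$ and $\paren{\sqrt{k^2/(m\floor{n/m}^2)}\,q^{1/2}}^q$ respectively.

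The main obstacle is the clean treatment of the optimal Rosenthal constant $B(q,\gamma)$ of Proposition~\ref{prop.rosenthal.inequality}, which multiplies both branches of \eqref{ineq.Lpo.Loo.moments.larger.quad}. One has to insert the known growth of $B(q,\gamma)^{1/q}$ in $q$ and verify that, after multiplication, the variance branch carries a factor $q^{1/2}$ and the moment branch a factor $q^{3/2}$, thereby matching the two terms of the maximum in \eqref{ineq.p.large.generalized.efron.stein}, while the prefactor $\floor{n/m}$ is preserved. Collecting the universal factor $\sqrt{2e}$ coming from this constant, the $2^q\gamma$ and $2$ appearing in \eqref{ineq.Lpo.Loo.moments.larger.quad}, and the branch constants $\sqrt{2C_1}$ and $2C_2$ into the single constant $\Gamma=2\sqrt{2e}\max\paren{\sqrt{2C_1},2C_2}$ then completes \eqref{ineq.p.large.generalized.efron.stein}. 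Everything else is routine bookkeeping of the numerical constants, themselves traceable to $\gamma_d$ (Lemma~\ref{Stone}) and $\kappa$ (Proposition~\ref{cor.generalized.Efron.Stein}).
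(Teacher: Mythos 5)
Your proposal is correct and follows essentially the same route as the paper's own proof: Ineq.~\eqref{ineq.variance.generalized.efron.stein}, \eqref{ineq.p.small.generalized.efron.stein} and \eqref{ineq.p.large.variance.generalized.efron.stein} are obtained by plugging Theorem~\ref{prop.moment.upper.bounds.Loo} into \eqref{ineq.Lpo.Loo.moments.direct.quad} and \eqref{ineq.Lpo.Loo.moments.quad}, and \eqref{ineq.p.large.generalized.efron.stein} by plugging it into \eqref{ineq.Lpo.Loo.moments.larger.quad} with the Rosenthal constant optimized as in Proposition~\ref{prop.rosenthal.inequality.upper.bound}, which is exactly how the paper collects the two branches into $\Gamma=2\sqrt{2e}\max\paren{\sqrt{2C_1},2C_2}$. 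The only point worth flagging is internal to the paper, not to your argument: this route delivers $C_1=2+16\gamma_d$ from Theorem~\ref{prop.moment.upper.bounds.Loo}, whereas the statement of Theorem~\ref{prop.moment.upper.bounds.Lpo} lists $C_1=\frac{128\kappa\gamma_d}{\sqrt{2\pi}}$.
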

The straightforward proof is detailed in Section~\ref{sec.proof.upper.bound.moments.Lpo}.
Let us start by noticing that both upper bounds in Eq.~\eqref{ineq.variance.generalized.efron.stein} and~\eqref{ineq.p.small.generalized.efron.stein} deteriorate as $p$ grows.
This is no longer the case for Eq.~\eqref{ineq.p.large.variance.generalized.efron.stein} and~\eqref{ineq.p.large.generalized.efron.stein}, which are specifically designed to cover the setup where $p>n/2+1$, that is where $\floor{n/m}$ is no longer equal to 1.
%
%
Therefore unlike Eq.~\eqref{ineq.variance.generalized.efron.stein} and~\eqref{ineq.p.small.generalized.efron.stein}, these last two inequalities are particularly relevant in the setup where $p/n\to 1$, as $n\to +\infty$, which has been investigated in different frameworks by \cite{Sha:1993,Yan:2006,Yang07,Celi_2014}.
Eq.~\eqref{ineq.p.large.variance.generalized.efron.stein} and~\eqref{ineq.p.large.generalized.efron.stein} lead to respective convergence rates at worse $k^{3/2}/n$ (for $q=2$) and $k^q/n^{q-1}$ (for $q>2$). In particular this last rate becomes approximately equal to $(k/n)^q$ as $q$ gets large.

One can also emphasize that, as a U-statistic of fixed order $m=n-p+1$, the L$p$O estimator has a known Gaussian limiting distribution, that is \citep[see Theorem~A, Section~5.5.1][]{Serf:1980} 
\begin{align*}
\frac{\sqrt{n}}{m} \paren{ \Rh_{p,n} - \E\croch{\Rh_{p,n}} } \xrightarrow[n\to +\infty]{\mathcal{L}} \mathcal{N}\paren{0, \sigma_1^2} ,
\end{align*}
where $\sigma_1^2 = \Var\croch{ g(Z_1) } $, with $g(z) = E\croch{ h_m(z,Z_2,\ldots,Z_{m}) }$.
Therefore the upper bound given by Eq.~\eqref{ineq.p.large.variance.generalized.efron.stein} is non-improvable in some sense with respect to the interplay between $n$ and $p$ since one recovers the right magnitude for the variance term as long as $m=n-p+1$ is assumed to be constant.

Finally Eq.~\eqref{ineq.p.large.generalized.efron.stein} has been derived using a specific version of the Rosenthal inequality \citep{IbragShar2002} stated with the optimal constant and involving a ``balancing factor''.
In particular this balancing factor has allowed us to optimize the relative weight of the two terms between brackets in Eq.~\eqref{ineq.p.large.generalized.efron.stein}.
This leads us to claim that the dependence of the upper bound with respect to $q$ cannot be improved with this line of proof.
However we cannot conclude that the term in $q^3$ cannot be improved using other technical arguments.

\section{Exponential concentration inequalities} \label{Section: Deviations}

This section provides exponential concentration inequalities for the L$p$O estimator applied to the $k$NN classifier.
Our main results heavily rely on the moments inequalities previously derived in Section~\ref{Section: Polynomial}, that is Theorem~\ref{prop.moment.upper.bounds.Lpo}.
In order to emphasize the gain allowed by this strategy of proof, we start this section by successively proving two exponential inequalities obtained with less sophisticated tools.
We then discuss the strength and weakness of each of them to justify the additional refinements we introduce step by step along the section.

A first exponential concentration inequality for $\Rhp(\A_k,\D_n) = \Rh_{p,n}$ can be derived by use of the bounded difference inequality following the line of proof of \citet[][Theorem~24.4]{DeGyLu_1996} originally developed for the L1O estimator.
\begin{prop}\label{prop.concentration.Lpo.DGL.straightforward}
For any integers $p,k\geq 1$ such that $ p+k\leq n$, let $\Rh_{p,n}$ denote the L$p$O estimator \eqref{def.Lpo.estimator} of the classification error of the $k$NN classifier $\A_k^{\Dn}(\cdot)$ defined by \eqref{def.knn.classifier}.
Then for every $t>0$,
\begin{align} \label{naive.upper.bound.difference}
  \prob{ \abs{  \Rh_{p,n} - \E\paren{ \Rh_{p,n} } } > t  } \leq 2 e^{- n \frac{t^2}{  8(k+p-1)^2 \gamma_d^2 } }.
\end{align}
where $\gamma_d$ denotes the constant introduced in Stone's lemma (Lemma~\ref{Stone}).
\end{prop}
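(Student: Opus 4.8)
The plan is to view $\widehat R_p(\hat f_k)$ as a function $\phi(Z_1,\dots,Z_n)$ of the $n$ independent observations and to apply the bounded difference (McDiarmid) inequality, exactly as in the L1O argument of \cite{DeGyLu_1996}. The whole task then reduces to controlling, for each $j$, the oscillation $\Delta_j=\sup\bigl|\phi(Z_1,\dots,Z_j,\dots,Z_n)-\phi(Z_1,\dots,Z_j',\dots,Z_n)\bigr|$ of the estimator when a single coordinate $Z_j$ is replaced by an independent copy $Z_j'$. I would then feed a uniform bound $\Delta_j\le c:=4(k+p-1)\gamma_d/n$ into McDiarmid to get $2\exp(-2t^2/(n c^2))=2\exp\bigl(-nt^2/(8(k+p-1)^2\gamma_d^2)\bigr)$, which is precisely the claimed inequality.

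To estimate $\Delta_j$ I would start from $\widehat R_p(\hat f_k)=\binom{n}{p}^{-1}p^{-1}\sum_{(e,i):\,i\in\bar e}\ind{\hat f_k(Z^e;X_i)\neq Y_i}$ and observe that a pair $(e,i)$ can be affected by the substitution $Z_j\to Z_j'$ only in two disjoint situations: \textbf{(a)} $j=i\in\bar e$, in which case $Z^e$ is untouched and only the single indicator at the test point changes; or \textbf{(b)} $j\in e$, in which case the classifier $\hat f_k(Z^e;\cdot)$ itself may change. A direct count settles (a): the number of training sets $e$ with $j\in\bar e$ is $\binom{n-1}{p-1}$, each contributing an oscillation at most $1$, so after normalization its total contribution is at most $\binom{n}{p}^{-1}p^{-1}\binom{n-1}{p-1}=1/n$.

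The main obstacle is (b), and the difficulty is that Stone's lemma does \emph{not} apply per split: the test points $i\in\bar e$ look for neighbours only inside $Z^e$, so the usual cone argument (which needs the competing points to themselves be admissible neighbours) breaks down, and for a single split as many as $p$ classifications could flip. The key idea I would use to bypass this is a \emph{global} reduction. If $X_j\in k\text{-NN}(X_i,Z^e)$ with $j\in e$ and $i\in\bar e$, then at most $k-1$ points of $Z^e$ are strictly closer to $X_i$ than $X_j$; adjoining the at most $p-1$ remaining test points in $\bar e\setminus\{i\}$ shows that at most $k+p-2$ points of the whole sample (minus $X_i$) are closer to $X_i$ than $X_j$. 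Hence $X_j$ is among the $(k+p-1)$ nearest neighbours of $X_i$ \emph{in the full sample}, which does satisfy the hypotheses of Stone's lemma (Lemma~\ref{Stone}). Therefore $X_j$ can be one of the $(k+p-1)$ nearest neighbours of at most $(k+p-1)\gamma_d$ points, and the same holds for $X_j'$, so over all splits at most $2(k+p-1)\gamma_d$ distinct test indices $i$ can ever be flipped in case (b).

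Finally I would count the splits. For each such affectable index $i$, the number of training sets with $j\in e$ and $i\in\bar e$ is $\binom{n-2}{p-1}$, each contributing at most $1/p$ after the inner normalization; using $\binom{n}{p}^{-1}\binom{n-2}{p-1}=p(n-p)/(n(n-1))$, the contribution of (b) is at most $2(k+p-1)\gamma_d\,(n-p)/(n(n-1))\le 2(k+p-1)\gamma_d/n$. Adding (a) and (b) and using $\gamma_d\ge 1$, $k+p-1\ge 1$ to absorb the stray $1/n$, I obtain $\Delta_j\le (1+2(k+p-1)\gamma_d)/n\le 4(k+p-1)\gamma_d/n$, the uniform constant required by McDiarmid. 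The only genuinely delicate step is the global rank reduction in case (b) that converts a $k$-NN relation inside a subsample into a $(k+p-1)$-NN relation in the full sample, thereby restoring the applicability of Stone's lemma; the remainder is bookkeeping with binomial coefficients.
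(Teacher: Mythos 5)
Your proof is correct and follows essentially the same route as the paper's: McDiarmid's inequality applied to $\widehat{R}_p(\hat f_k)$ as a function of $Z_1,\ldots,Z_n$, with the oscillation controlled by exactly the paper's key observation that a $k$-NN relation inside a subsample $Z^e$ (with $j\in e$, $i\in\bar e$) forces $X_j$ to be among the $k+p-1$ nearest neighbours of $X_i$ in the full sample, so that Stone's lemma bounds the number of affectable test indices by $2(k+p-1)\gamma_d$. Your bookkeeping is in fact slightly more careful than the paper's (which ends with $\bigl(4(k+p-1)\gamma_d+1\bigr)/n$ and leaves the stray $1/n$ unabsorbed), since your use of $1\leq 2(k+p-1)\gamma_d$ yields the uniform bound $4(k+p-1)\gamma_d/n$ and hence the stated constant exactly.
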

The proof is given in Appendix~\ref{subsec.bounded.differences.proof}.

The upper bound of Eq.~\eqref{naive.upper.bound.difference} strongly exploits the facts that: (i) for $X_j$ to be one of the $k$ nearest neighbors of $X_i$ in at least one subsample $X^e$, it requires $X_j$ to be one of the $k+p-1$ nearest neighbors of $X_i$ in the complete sample, and (ii) the number of points for which $X_j$ may be one of the $k+p-1$ nearest neighbors cannot be larger than $(k+p-1)\gamma_d$ by Stone's Lemma (see Lemma~\ref{Stone}).

This reasoning results in a rough upper bound since the denominator in the exponent exhibits a $(k+p-1)^2 $ factor where $k$ and $p$ play the same role.
The reason is that we do not distinguish between points for which $X_j$ is among or above the $k$ nearest neighbors of $X_i$ in the whole sample,
although these two setups lead to strongly different probabilities of being among the $k$ nearest neighbors in the training resample.
Consequently the dependence of the convergence rate on $k$ and $p$ in Proposition~\ref{prop.concentration.Lpo.DGL.straightforward} can be improved, as confirmed by forthcoming Theorems~\ref{Prop : ConcentrationLpO} and~\ref{Prop : ConcentrationLpO:bis}.

\medskip

Based on the previous comments, a sharper quantification of the influence of each neighbor among the $k+p-1$ ones leads to the next result.
\begin{thm}\label{Prop : ConcentrationLpO}
For every $p,k\geq 1$ such that $p + k \leq n$, let $\Rh_{p,n}$ denote the L$p$O estimator \eqref{def.Lpo.estimator} of the classification error of the $k$NN classifier $\A_k^{\Dn}(\cdot)$ defined by \eqref{def.knn.classifier}.
Then there exists a numeric constant $\square>0$ such that for every $t>0$,
{\small
\begin{align*}
 & \max\paren{\prob{  \Rh_{p,n} - \E\paren{ \Rh_{p,n} }  > t  } ,  \prob{ \E\paren{ \Rh_{p,n} } - \Rh_{p,n} > t  } }   \leq  \exp \paren{ -  n \frac{  t^2 }{  \square k^2 \croch{1 + (k+p)\frac{p-1}{n-1}} } }  ,
\end{align*}}
with $\square = 1024 e\kappa (1+\gamma_d )$, where $\gamma_d$ is introduced in Lemma~\ref{Stone} and $\kappa\leq 1.271$ is a universal constant.
\end{thm}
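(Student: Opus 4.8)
The plan is to apply the generalized Efron--Stein inequality of Proposition~\ref{cor.generalized.Efron.Stein} \emph{directly} to the L$p$O estimator seen as a function $Z=\Rhp(\hat f_k)$ of the $n$ independent pairs $Z_1,\dots,Z_n$, and then to turn the resulting moment control into an exponential tail bound by the moment method. Denoting by $Z_i'$ the statistic obtained after replacing $Z_i$ by an independent copy, Proposition~\ref{cor.generalized.Efron.Stein} gives for every $q\geq 2$
\[
\norm{Z-\E Z}_q \leq \sqrt{2\kappa q}\,\sqrt{\norm{\sum_{i=1}^n \paren{Z-Z_i'}^2}_{q/2}},
\]
so the whole problem reduces to a \emph{deterministic} bound on $V:=\sum_{i=1}^n\paren{Z-Z_i'}^2$.

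The decisive point -- this is the ``sharper quantification of the influence of each neighbor'' announced before the statement -- is to avoid the crude estimate $\sum_i a_i^2$ (which would cost a factor $\gamma_d^2$) by writing instead
\[
V \leq \paren{\max_{1\leq i\leq n}\abs{Z-Z_i'}}\,\sum_{i=1}^n \abs{Z-Z_i'}.
\]
Replacing $Z_i$ only modifies the terms of the L$p$O average attached to a test point $X_\ell$ for which $X_i$ is among the $k+p-1$ nearest neighbours of $X_\ell$ in the complete sample (point (i) recalled after Proposition~\ref{prop.concentration.Lpo.DGL.straightforward}). For the sum $\sum_i\abs{Z-Z_i'}$ I would count (point, neighbour) incidences: each test point has exactly $k$ voting neighbours, so this total carries \emph{no} factor $\gamma_d$. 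For the maximum I would invoke Stone's Lemma~\ref{Stone}, by which a single point is among the $k+p-1$ nearest neighbours of at most $\gamma_d(k+p-1)$ others, which is what brings in the \emph{single} factor $\gamma_d$; adding the contribution of the splits in which $X_i$ is itself a test point accounts for the ``$+1$'' in $1+\gamma_d$.

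The refined combinatorics enters both factors through the averaging over the $\binom{n}{p}$ splits: distinguishing whether $X_i$ is ranked in the top $k$ or between $k+1$ and $k+p-1$ in the full sample, and computing the proportion of test sets that delete enough closer neighbours to promote $X_i$ into the effective $k$-neighbourhood, produces the weight $\frac{p-1}{n-1}$. Carrying this through yields a deterministic bound of the form $V\leq 256\,k^2\paren{1+\gamma_d}\croch{1+(k+p)\frac{p-1}{n-1}}/n$, which checks out at $p=1$, where it reduces to the L1O estimate $V\leq\gamma_d k^2/n$. Plugging this into the Efron--Stein display shows that $Z-\E Z$ has sub-Gaussian moment growth $\norm{Z-\E Z}_q\leq\sigma\sqrt q$ with $\sigma^2 = 2\kappa\cdot 256\,k^2(1+\gamma_d)\croch{1+(k+p)\frac{p-1}{n-1}}/n$.

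It then remains to convert moments into tails. For the upper tail,
\[
\prob{Z-\E Z > t} \leq \inf_{q\geq 2} t^{-q}\,\E\croch{(Z-\E Z)_+^q} \leq \inf_{q\geq 2}\paren{\frac{\sigma\sqrt q}{t}}^q \leq \exp\paren{-\frac{t^2}{2e\sigma^2}},
\]
the minimiser being $q=t^2/(e\sigma^2)$; because the one-sided probability is controlled directly there is no spurious factor $2$, and the lower tail follows identically by symmetry of the moment bound. Substituting $\sigma^2$ gives $2e\sigma^2 = 1024\,e\kappa(1+\gamma_d)\,k^2\croch{1+(k+p)\frac{p-1}{n-1}}/n$, i.e. exactly the announced exponent with $\square = 1024\,e\kappa(1+\gamma_d)$. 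I expect the only genuinely delicate step to be the deterministic estimate of $V$, and more precisely extracting the exact weight $\croch{1+(k+p)\frac{p-1}{n-1}}$ from the split-averaging while keeping the dependence on $\gamma_d$ linear through the $\max \times \mathrm{sum}$ splitting; everything downstream is routine.
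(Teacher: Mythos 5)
Your overall architecture coincides with the paper's: apply the generalized Efron--Stein moment inequality (Proposition~\ref{cor.generalized.Efron.Stein}) directly to $Z=\Rhp(\hat f_k)$, reduce the problem to a deterministic bound on $V=\sum_{j=1}^n\paren{Z-Z_j'}^2$, and convert sub-Gaussian moment growth into an exponential tail (the paper does that last step via $q!\geq q^qe^{-q}\sqrt{2\pi q}$ and Lemma~\ref{lem.subgaussian.moment.to.exp}; your Markov-plus-optimization is an acceptable variant, up to the usual care needed when $t$ is so small that the optimal $q$ drops below $2$). The gap is in the central step, the bound on $V$.

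Starting from $\abs{Z-Z_j'}\leq\frac1n+\frac2p\sum_{i}\P_e\croch{j\in e,\ i\in\bar e,\ j\in V_k^e(X_i)}$, your estimate of the sum, $\sum_j\abs{Z-Z_j'}\leq 1+2k$, is correct (exchange the two sums and use $\sum_j\P_e\croch{i\in\bar e,\ j\in V_k^e(X_i)}=kp/n$). But the bound you describe for the maximum --- Stone's lemma applied to the $k+p-1$ nearest neighbours, each incidence costing $\P_e\croch{i\in\bar e}=p/n$ --- only yields $\max_j\abs{Z-Z_j'}\leq\paren{1+2(k+p-1)\gamma_d}/n$, whence $V\lesssim k(k+p)\gamma_d/n$. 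This is \emph{not} the announced $k^2(1+\gamma_d)\croch{1+(k+p)\frac{p-1}{n-1}}/n$, and it is strictly weaker precisely in the regime where the theorem improves on Proposition~\ref{prop.concentration.Lpo.DGL.straightforward}: for $k$ fixed and $p\sim\sqrt n$ it gives $V\lesssim k\sqrt n/n$ where $O(k^2/n)$ is required. The additive structure $1+(k+p)\frac{p-1}{n-1}$ cannot come out of a single product $(\max)\times(\mathrm{sum})$: in the paper it arises from expanding the square of $\sum_i\P_e\croch{\cdots}$ into a diagonal term and cross terms $\sum_{i\neq\ell}\P_e\croch{\cdots_i}\P_e\croch{\cdots_\ell}$, and then splitting the $\ell$-sum according to whether $j$ ranks below or above $k$ among the neighbours of $X_\ell$ in the full sample; the weight $\frac{p-1}{n-1}$ is the resampling probability mass of the far-ranked regime (Lemma~\ref{BasicResultForResamplingKnn}) and multiplies only that second group of cross terms. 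Checking your formula at $p=1$ says nothing about this $p$-dependence. The max-times-sum idea is not hopeless --- it could be made to work by proving the much sharper estimate $\max_j\sum_i\P_e\croch{j\in e,\ i\in\bar e,\ j\in V_k^e(X_i)}\leq\gamma_d\,kp/n$, which needs a rank-by-rank refinement of the cone argument combined with the monotonicity in the rank of the promotion probabilities, not the incidence count you invoke --- but as written the key deterministic bound on $V$ is asserted rather than derived, and the derivation you sketch does not produce it.
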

The proof is given in Section~\ref{Appendix:ConcentrationLpO}.

Let us remark that unlike Proposition~\ref{prop.concentration.Lpo.DGL.straightforward}, taking into account the rank of each neighbor in the whole sample enables to considerably reduce the weight of $p$ (compared to that of $k$) in the denominator of the exponent.
In particular, one observes that letting $p/n\to 0$ as $n\to +\infty$ (with $k$ assumed to be fixed for instance) makes the influence of the $k+p$ factor asymptotically negligible. This would allow to recover (up to numeric constants) a similar upper bound to that of \citet[][Theorem~24.4]{DeGyLu_1996}, achieved with $p=1$.

\medskip

However the upper bound of Theorem~\ref{Prop : ConcentrationLpO} does not reflect the right dependencies with respect to $k$ and $p$ compared with what has been proved for polynomial moments in Theorem~\ref{prop.moment.upper.bounds.Lpo}.
The upper bound seems to strictly deteriorate as $p$ increases, which contrasts with the upper bounds derived for $p>n/2+1$ in Theorem~\ref{prop.moment.upper.bounds.Lpo}.
This drawback is overcome by the following result, which is our main contribution in the present section.
\begin{thm}\label{Prop : ConcentrationLpO:bis}
For every $p,k\geq 1$ such that  $p+k \leq n$, let $\Rh_{p,n}$ denote the L$p$O estimator of the classification error of the $k$NN classifier $\hat f_k=\A_k^{\Dn}(\cdot)$ defined by \eqref{def.knn.classifier}.
Then for every $t>0$,
\begin{align}\label{ineq.exponential.concentration.small.p}
\max\paren{\prob{ \Rh_{p,n} - \E\croch{\Rh_{p,n}} >t }   , \prob{ \E\croch{\Rh_{p,n}} - \Rh_{p,n} >t } }  \leq   \exp\paren{- (n-p+1) \frac{t^2}{\Delta^2 k^2} } ,
\end{align}
where $\Delta = 4\sqrt{e}\max\paren{ C_2,\sqrt{C_1} }$ with $C_1,C_2>0$ defined in Theorem~\ref{prop.moment.upper.bounds.Loo}.

Furthermore in the particular setting where $p>n/2+1$, it comes
\begin{align}
& \max\paren{ \prob{ \Rh_{p,n} - \E\croch{\Rh_{p,n}} >t } ,  \prob{ \E\croch{\Rh_{p,n}} - \Rh_{p,n} >t }} \leq e \floor{\frac{n}{n-p+1}} \times \nonumber \\
&{\small  \exp\gc{- \frac{1}{2e} \min\acc{  \paren{n-p+1}\floor{\frac{n}{n-p+1}}\frac{t^2 }{4\Gamma^2 k^{3/2}}  , \  \gp{ (n-p+1) \floor{\frac{n}{n-p+1}}^2\frac{ t^2}{4\Gamma^2k^2} }^{1/3} }  }  } , \hfill \label{ineq.exponential.concentration.large.p}
\end{align}
where $\Gamma$ arises in Eq.~\eqref{ineq.p.large.generalized.efron.stein} and $\gamma_d$ denotes the constant introduced in Stone's lemma (Lemma~\ref{Stone}).
\end{thm}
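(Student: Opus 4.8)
The plan is to deduce both inequalities of Theorem~\ref{Prop : ConcentrationLpO:bis} from the polynomial moment bounds of Theorem~\ref{prop.moment.upper.bounds.Lpo} by the classical moment method. Writing $Z = \Rhp(\hat f_k) - \E\croch{\Rhp(\hat f_k)}$ and $m=n-p+1$, Markov's inequality applied to $\abs{Z}^q$ gives, for every $q$,
\[
\prob{Z>t}\vee\prob{-Z>t}\leq \E\croch{\abs{Z}^q}/t^q ,
\]
so it only remains to insert the moment bounds and optimize the resulting expression over $q$. Since Theorem~\ref{prop.moment.upper.bounds.Lpo} controls $\E\croch{\abs{Z}^q}$, hence both one-sided tails simultaneously, no extra factor is needed to handle the two events.

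For the first inequality~\eqref{ineq.exponential.concentration.small.p}, I would first merge the $q=2$ bound~\eqref{ineq.variance.generalized.efron.stein} and the $q>2$ bound~\eqref{ineq.p.small.generalized.efron.stein} into a single sub-Gaussian control $\norm{Z}_q\leq \sigma\sqrt q$ valid for all $q\geq 2$, with $\sigma = \max\paren{C_2,\sqrt{C_1}}\,k/\sqrt{m}$. The only subtlety is that the variance bound carries a factor $k^{3/4}$ whereas the higher moments carry a factor $k$; using $k^{3/4}\leq k$ for $k\geq 1$ absorbs the $q=2$ term into the same $\sigma$. Plugging $\norm{Z}_q\leq \sigma\sqrt q$ into $\paren{\sigma\sqrt q/t}^q$ and minimizing over $q$ (the optimum being $q$ of order $(t/\sigma)^2$) yields a Gaussian tail $\exp\paren{-c\,t^2/\sigma^2}$; substituting $\sigma$ and absorbing the numeric constant from the optimization into $\Delta = 4\sqrt e\max(C_2,\sqrt{C_1})$ produces exactly~\eqref{ineq.exponential.concentration.small.p}.

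For the second inequality~\eqref{ineq.exponential.concentration.large.p}, the relevant moment bound~\eqref{ineq.p.large.generalized.efron.stein} is no longer purely sub-Gaussian. Writing $N=\floor{n/m}$, it reads $\E\croch{\abs{Z}^q}\leq N\,\Gamma^q\paren{a\,q^{1/2}\vee b\,q^{3/2}}^q$ with $a=\sqrt{k\sqrt k/(mN)}$ and $b=\sqrt{k^2/(mN^2)}$ (the $q=2$ bound~\eqref{ineq.p.large.variance.generalized.efron.stein} merging into the $a$-scale exactly as in the first part, thanks to the factor $\sqrt{2C_1}$ built into $\Gamma$). Bounding the maximum by a sum puts us in a two-scale sub-gamma situation, and I would optimize the two monomials separately: the $q^{1/2}$ scale produces, as above, a Gaussian exponent proportional to $(t/a)^2$, i.e. the first term inside the minimum; the $q^{3/2}$ scale, optimized the same way (optimum $q$ of order $(t/b)^{2/3}$), produces a stretched-exponential exponent proportional to $(t/b)^{2/3}=(t^2/b^2)^{1/3}$, i.e. the second term with its cube root. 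The overall tail is governed by the weaker of the two regimes, which is why the exponent is the \emph{minimum} of the two expressions, while the multiplicative prefactor $N$ simply carries through Markov's inequality and, together with the numeric constant of order $e$ from the optimization, yields the $e\floor{n/(n-p+1)}$ factor in front.

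The main obstacle is the bookkeeping in this last step: one must carry the prefactor $N$ and the two scales $a,b$ through the optimization simultaneously, check that the optimizing $q$ is admissible (namely $q\geq 2$, the regime $t\lesssim\sigma$ being dealt with trivially since the right-hand side then exceeds $1$), and verify that combining the two single-scale tail bounds gives a bound driven by the minimum of the two exponents rather than their sum. The sub-Gaussian part (the whole first inequality and the $q^{1/2}$ term of the second) is routine; the genuinely new ingredient is the $q^{3/2}$ contribution, whose cube-root exponent reflects the heavier tail permitted when $p>n/2+1$, in line with Remark~\ref{rk.different.rates}.
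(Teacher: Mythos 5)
Your overall route is the paper's: plug the moment bounds of Theorem~\ref{prop.moment.upper.bounds.Lpo} into a moment-to-exponential conversion, treating the regimes $p\leq n/2+1$ and $p>n/2+1$ separately. For the second inequality your plan is essentially a re-derivation of the paper's Proposition~\ref{prop.moment.exponential} (Markov applied to $\abs{Z}^q$ with $q$ of order $e^{-1}\min_j\{(t/(N\lambda_j))^{1/\alpha_j}\}$, the two scales $a q^{1/2}$ and $b q^{3/2}$ yielding the minimum of a Gaussian and a cube-root exponent), and your remark that the inadmissible regime $\tilde q<q_0=2$ is covered because the right-hand side then exceeds $1$ is correct \emph{there}, precisely because of the prefactor $e\floor{\frac{n}{n-p+1}}\geq e$; that prefactor is exactly the $Ce^{q_0\min_j\alpha_j}$ of Proposition~\ref{prop.moment.exponential}.

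That same argument does not close the first inequality~\eqref{ineq.exponential.concentration.small.p}, whose right-hand side carries no prefactor and is strictly smaller than $1$ for every $t>0$. When the optimizing exponent $q^*\asymp (t/\sigma)^2$ drops below $2$, i.e.\ $t\lesssim\sigma$ with $\sigma=\max(C_2,\sqrt{C_1})\,k/\sqrt{n-p+1}$, Markov applied to any single admissible moment $q\geq 2$ only gives $\mathbb{P}\left(\abs{Z}>t\right)\leq 2\sigma^2/t^2$, which is useless (it exceeds $1$) in that range, whereas the claimed bound $\exp\paren{-t^2/(16e\sigma^2)}$ is a nontrivial number in $(e^{-1/8},1)$; so your statement that ``the right-hand side then exceeds $1$'' fails for this inequality and the single-moment optimization cannot deliver a prefactor-free sub-Gaussian tail valid for all $t$. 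The paper avoids this by recasting the merged bound as $\mathbb{E}\left[Z^{2q}\right]\leq q!\,C^q$ with $C=\frac{\Delta^2}{8}\frac{k^2}{n-p+1}$ (using $q^q\leq q!\,e^q/\sqrt{2\pi q}$) and invoking Lemma~\ref{lem.subgaussian.moment.to.exp} (Theorem~2.1 in \cite{BouLugMas_2013}), whose proof exploits all moments simultaneously through the moment generating function and returns a clean $e^{-t^2/(2\nu)}$ with $\nu=4C$ --- the factor $4$ being precisely the price paid to remove the prefactor. Replacing your optimization step for~\eqref{ineq.exponential.concentration.small.p} by that lemma (your bound $\norm{Z}_{2q}\leq\sigma\sqrt{2q}$ translates directly into the required $q!\,C^q$ form) repairs the argument; the rest of your proposal is sound modulo the bookkeeping you already acknowledge.
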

The proof has been postponed to Appendix~\ref{Appendix:ConcentrationLpObis}. It involves different arguments for the two inequalities~\eqref{ineq.exponential.concentration.small.p} and~\eqref{ineq.exponential.concentration.large.p} depending on the range of values of $p$.
Firstly for $p\leq n/2+1$, a simple argument is applied to derive Ineq.~\eqref{ineq.exponential.concentration.small.p} from the two corresponding moment inequalities of Theorem~\ref{prop.moment.upper.bounds.Lpo} characterizing the sub-Gaussian behavior of the L$p$O estimator in terms of its even moments (see Lemma~\ref{lem.subgaussian.moment.to.exp}).
Secondly for $p>n/2+1$, we rather exploit: $(i)$ the appropriate upper bounds on the moments of the L$p$O estimator given by Theorem~\ref{prop.moment.upper.bounds.Lpo}, and $(ii)$ a dedicated Proposition~\ref{prop.moment.exponential} which provides exponential concentration inequalities from general moment upper bounds.

In accordance with the conclusions drawn about Theorem~\ref{prop.moment.upper.bounds.Lpo}, the upper bound of Eq.~\eqref{ineq.exponential.concentration.small.p} increases as $p$ grows, unlike that of Eq.~\eqref{ineq.exponential.concentration.large.p} which improves as $p$ increases.
In particular the best concentration rate in Eq.~\eqref{ineq.exponential.concentration.large.p} is achieved as $p/n\to 1$, whereas Eq.~\eqref{ineq.exponential.concentration.small.p} turns out to be useless in that setting.
Let us also notice that	Eq.~\eqref{ineq.exponential.concentration.small.p} remains strictly better than Theorem~\ref{Prop : ConcentrationLpO} as long as $p/n \to \delta \in [0,1[ $, as $n\to +\infty$.
Note also that the constants $\Gamma$ and $\gamma_d$ are the same as in Theorem~\ref{prop.moment.upper.bounds.Loo}. Therefore the same comments regarding their dependence with respect to the dimension $d$ apply here.

\medskip

In order to facilitate the interpretation of the last Ineq.~\eqref{ineq.exponential.concentration.large.p}, we also derive the following proposition (proved in Appendix~\ref{Appendix:ConcentrationLpObis}) which focuses on the description of each deviation term in the particular case where $p> n/2+1$.
\begin{prop}\label{prop.exponential.inequality.p.large.explicit.deviations}
With the same notation as Theorem~\ref{Prop : ConcentrationLpO:bis}, for any $p,k\geq 1$ such that $p+k \leq n$, $p> n/2+1$, and for every $t>0$
\begin{align*}
 \P\croch{ \abs{\Rh_{p,n} - \E\croch{\Rh_{p,n}}} >  \frac{\sqrt{2e}\Gamma}{\sqrt{n-p+1}} \paren{\sqrt{\frac{k^{3/2}}{\floor{\frac{n}{n-p+1} } } t} +   2e  \frac{k}{\floor{\frac{n}{n-p+1} } } t^{3/2}   } } \leq \floor{\frac{n}{n-p+1}} e \cdot e^{-t} , \hfill
\end{align*}
where $\Gamma>0$ is the constant arising from \eqref{ineq.p.large.generalized.efron.stein}.
\end{prop}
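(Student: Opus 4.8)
The plan is to derive Proposition~\ref{prop.exponential.inequality.p.large.explicit.deviations} by \emph{inverting} the tail bound \eqref{ineq.exponential.concentration.large.p} already established in Theorem~\ref{Prop : ConcentrationLpO:bis}, which is valid precisely in the regime $p>n/2+1$ considered here. Writing $m=n-p+1$ and $N=\floor{n/m}$ for brevity, that inequality reads, for every deviation level $s>0$,
\begin{align*}
  \P\croch{ \abs{\Rhp(\hat{f}_k) - \E\croch{\Rhp(\hat{f}_k)}} > s } \leq e\,N \exp\gc{ -\frac{1}{2e}\min\acc{ T_1(s), T_2(s) } } ,
\end{align*}
with $T_1(s)=mN\,\frac{s^2}{4\Gamma^2 k\sqrt{k}}$ and $T_2(s)=\paren{ mN^2\,\frac{s^2}{4\Gamma^2 k^2} }^{1/3}$. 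Given a target $t>0$, it then suffices to choose a deviation level $s=s(t)$ for which $\frac{1}{2e}\min\acc{T_1(s),T_2(s)}\geq t$: the prefactor $e\,N$ is left untouched and the right-hand side becomes the announced $N\,e\cdot e^{-t}$.

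The core step is to disentangle the two competing regimes inside the $\min$. Requiring $\min\acc{T_1(s),T_2(s)}\geq 2et$ is equivalent to requiring $T_1(s)\geq 2et$ \emph{and} $T_2(s)\geq 2et$ simultaneously, and I would invert each of these single-variable inequalities on its own. The quadratic branch $T_1$ inverts to a threshold proportional to $t^{1/2}$, of the form $s\geq c_1\sqrt{k\sqrt{k}/(mN)}\,t^{1/2}$, whereas the cube-root branch $T_2$, after cubing, inverts to a threshold proportional to $t^{3/2}$, of the form $s\geq c_2\sqrt{k^2/(mN^2)}\,t^{3/2}$, the constants $c_1,c_2$ being absolute multiples of $\sqrt{e}\,\Gamma$. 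Using $\max(a,b)\leq a+b$, I would then take $s(t)$ to be the \emph{sum} of these two thresholds, which forces both constraints at once and reproduces exactly the two-term deviation level displayed in the statement.

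Substituting $m=n-p+1$ and $N=\floor{n/m}$ back in, the resulting level is
\begin{align*}
  s(t) = \sqrt{2e}\,\Gamma\paren{ \sqrt{\frac{k\sqrt{k}}{(n-p+1)\floor{\frac{n}{n-p+1}}}}\,t^{1/2} + 2e\,\sqrt{\frac{k^2}{(n-p+1)\floor{\frac{n}{n-p+1}}^2}}\,t^{3/2} } ,
\end{align*}
which is precisely the argument of $\P$ in the proposition. The first summand is the \emph{sub-Gaussian} contribution coming from the quadratic branch of the exponent, while the second is the heavier \emph{sub-gamma}-type contribution coming from the cube-root branch; isolating these two terms is exactly the ``description of each deviation term'' the proposition is meant to make transparent. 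Here the hypothesis $p>n/2+1$ guarantees $m<n/2$, hence $N=\floor{n/m}\geq 2$, so that the large-$p$ branch of Theorem~\ref{Prop : ConcentrationLpO:bis} is indeed the one in force and the prefactor $e\,N$ is a genuine constant.

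The main obstacle is purely one of constant bookkeeping: one must track the absolute factors through the two inversions so that the exponent comes out to be (at least) $t$ rather than a fixed fraction of it. Concretely, the quadratic branch produces a factor of order $\sqrt{e}$ and the cube-root branch a factor of order $e^{3/2}$, and one has to check that, after taking the sum of thresholds, these collapse into the $\sqrt{2e}\,\Gamma$ prefactor together with the explicit $2e$ weight on the $t^{3/2}$ term. No further probabilistic ingredient is needed beyond Theorem~\ref{Prop : ConcentrationLpO:bis}; alternatively, one could re-derive the statement directly from the moment bound \eqref{ineq.p.large.generalized.efron.stein} via the same moments-to-tails device (Lemma~\ref{lem.subgaussian.moment.to.exp} and Proposition~\ref{prop.moment.exponential}) used for that theorem, but inverting the already-proven tail bound is the shortest route.
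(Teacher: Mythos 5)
Your overall architecture is sensible, but your primary route --- inverting the already-proven tail bound \eqref{ineq.exponential.concentration.large.p} --- does not deliver the constants of the proposition; it only yields the inequality with the deviation level inflated by a factor $2$. Write $m=n-p+1$, $N=\floor{n/m}$, $\lambda_1=\Gamma\sqrt{k\sqrt{k}/(mN)}$ and $\lambda_2=\Gamma\sqrt{k^2/(mN^2)}$, so that the target deviation level is exactly $\lambda_1(2et)^{1/2}+\lambda_2(2et)^{3/2}$. The two branches inside the $\min$ of \eqref{ineq.exponential.concentration.large.p} are $\paren{s/(2\lambda_1)}^2$ and $\paren{s/(2\lambda_2)}^{2/3}$: the $4\Gamma^2$ in the denominators is really $(N\lambda_j)^2$ with $N=2$, because that bound was obtained from \eqref{ineq.moment.exponential.deviation}, whose proof has already paid for the crude step $\sum_{i}\lambda_i q^{\alpha_i}\le N\max_i\acc{\lambda_i q^{\alpha_i}}$. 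Solving $\frac{1}{2e}\paren{s/(2\lambda_1)}^2\ge t$ and $\frac{1}{2e}\paren{s/(2\lambda_2)}^{2/3}\ge t$ gives the thresholds $2\lambda_1(2et)^{1/2}$ and $2\lambda_2(2et)^{3/2}$, i.e.\ twice the two terms appearing in the statement. So the ``constant bookkeeping'' you defer is not mere bookkeeping: it fails by a factor $2$ on each term, and the proposition as stated cannot be recovered by inverting \eqref{ineq.exponential.concentration.large.p}.

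The fix is precisely the alternative you mention in passing, and it is what the paper does: go back to the moment bounds \eqref{ineq.variance.generalized.efron.stein} (for $q=2$) and \eqref{ineq.p.large.generalized.efron.stein} (for $q>2$), bound the max by a sum to get $\E\croch{\abs{\Rhp(\hat f_k)-\E\croch{\Rhp(\hat f_k)}}^q}\le C\paren{\lambda_1 q^{1/2}+\lambda_2 q^{3/2}}^q$ with $C=\floor{n/m}$, and apply the second conclusion \eqref{ineq.moment.exponential.error} of Proposition~\ref{prop.moment.exponential} with $q_0=2$ and $\min_j\alpha_j=1/2$. That form of the lemma evaluates the full sum $\sum_i\lambda_i(q^*)^{\alpha_i}$ at the optimized $q^*=x/\min_j\alpha_j=2x$ instead of replacing it by $N\max_i$, which is exactly where the factor $2$ is saved; it returns the deviation level $\lambda_1(2ex)^{1/2}+\lambda_2(2ex)^{3/2}$ and the prefactor $Ce^{q_0\min_j\alpha_j}=\floor{n/m}\,e$, i.e.\ the proposition verbatim.
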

The present inequality is very similar to the well-known Bernstein inequality \citep[][Theorem~2.10]{BouLugMas_2013} except the second deviation term of order $t^{3/2}$ instead of $t$ (for the Bernstein inequality).

With respect to $n$, the first deviation term is of order $\approx k^{3/2}/\sqrt{n}$, which is the same as with the Bernstein inequality.
The second deviation term is of a somewhat different order, that is $\approx k\sqrt{n-p+1}/n$, as compared with the usual $1/n$ in the Bernstein inequality.
Note that we almost recover this $k/n$ rate by choosing for instance $p \approx n (1- \log n/n)$, which leads to $k \sqrt{\log n}/n$.
Therefore varying $p$ allows to interpolate between the $k/\sqrt{n}$ and the $k/n$ rates.

Note also that the dependence of the first (sub-Gaussian) deviation term with respect to $k$ is only $k^{3/2}$, which improves upon the usual $k^2$ resulting from Ineq.~\eqref{ineq.exponential.concentration.small.p} in Theorem~\ref{Prop : ConcentrationLpO:bis} for instance. However this $k^{3/2}$ remains certainly too large for being optimal even if this question remains widely open at this stage in the literature. 

More generally one strength of our approach is its versatility. Indeed the two above deviation terms directly result from the two upper bounds on the moments of the L1O stated in Theorem~\ref{prop.moment.upper.bounds.Loo}. Therefore any improvement of the latter upper bounds would immediately lead to enhance the present concentration inequality (without changing the proof).

%
%


\section{Assessing the gap between L$p$O and classification error} \label{Section:Old}
\subsection{Upper bounds}
First, we derive new upper bounds on different measures of the discrepancy between $\Rh_{p,n} = \Rhp\paren{\A_k, \Dn }$ and the classification error $L(\hat f_k)$ or the risk $R(\hat f_k) = \E\croch{L(\hat f_k)}$.
These bounds on the L$p$O estimator are completely new for $p>1$, some of them being extensions of former ones specifically derived for the L1O estimator applied to the $k$NN classifier.
%

%
%
\begin{thm}\label{Prop : Consistency}
For every $ p,k\geq 1$ such that $p\leq \sqrt{k}$ and $ \sqrt{k}+k \leq n$, let $\Rh_{p,n}$ denote the L$p$O risk estimator (see \eqref{def.Lpo.estimator}) of the $k$NN classifier $\hat f_k=\A_k^{\Dn}(\cdot)$ defined by \eqref{def.knn.classifier}.
Then,
\begin{align}\label{Equ : BiasLpO}
\abs{\mathbb{E}\croch{ \Rh_{p,n} } -R(\hat{f}_k)  }  \leq  \frac{4}{\sqrt{2\pi}}\frac{p\sqrt{k}}{n}  \enspace ,
\end{align}
and
\begin{align}\label{eq.EQM.LpO}
\mathbb{E}\croch{ \paren{ \Rh_{p,n} - R(\hat{f}_k) }^2 }   \leq  \frac{128\kappa\gamma_d}{\sqrt{2\pi}} \frac{k^{3/2}}{n-p+1}+  \frac{16}{2\pi}\frac{p^2 k }{n^2} \enspace \cdot
\end{align}
Moreover,
\begin{eqnarray}\label{ineq.upper.bound.squared.deviation.moment}
\esp{\left(\Rh_{p,n}-L(\hat f_k)\right)^2} \leq \frac{2\sqrt{2}}{\sqrt{\pi}}\frac{(2p+3)\sqrt{k}}{n} + \frac{1}{n} \enspace\cdot
\end{eqnarray}
\end{thm}
By contrast with the results in the previous sections, a new restriction on $p$ arises in Theorem~\ref{Prop : Consistency}, that is $p\leq \sqrt{k}$.
It is the consequence of using Lemma~\ref{Lemma : HOresult} in the above proof to quantify how different two classifiers respectively computed from the same $n$ and $n-p$ points can be.
Indeed this lemma, which provides an upper bound on the $L^1$ stability of the $k$NN classifier previously proved by \cite{DeWa79}, only remains meaningful as long as $1\leq p\leq \sqrt{k}$.
\begin{proof}[Proof of Theorem~\ref{Prop : Consistency}]
	~\\
	\textbf{Proof of \eqref{Equ : BiasLpO}:}
	With $\hat{f}^e_k = \A_k^{\D^e}$, Lemma~\ref{Lemma : HOresult} immediately provides
	\begin{eqnarray*}
		\left|\mathbb{E}\left[\Rh_{p,n}-L(\hat{f}_k)\right]\right| &=& \left|\mathbb{E}\left[L(\hat{f}^e_k)\right]-\mathbb{E}\left[L(\hat{f}_k)\right]\right|\\
		& \leq &  \mathbb{E}\left[ \abs{\1_{\{\A_k^{\D^e}(X)\neq Y\}}-\1_{\{\A_k^{\Dn}(X)\neq Y\}} } \right] \\
		&=& \prob{\A_k^{\D^e}(X)\neq \A_k^{\Dn}(X)} \leq  \frac{4}{\sqrt{2\pi}}\frac{p\sqrt{k}}{n} \enspace\cdot
	\end{eqnarray*}

	\noindent \textbf{Proof of \eqref{eq.EQM.LpO}:}
	The proof combines the previous upper bound with the one established for the variance of the L$p$O estimator, that is Eq.~\eqref{ineq.variance.generalized.efron.stein}.
	\begin{align*}
	\mathbb{E}\croch{ \paren{ \Rh_{p,n}-\E\croch{L(\hat{f}_k)} }^2 }
	& = \mathbb{E}\croch{ \paren{ \Rh_{p,n}-\E\croch{ \Rh_{p,n} } }^2 } +  \paren{ \mathbb{E}\croch{ \Rh_{p,n} } -\E\croch{L(\hat{f}_k)} }^2  \\
	& \leq  \frac{128\kappa\gamma_d}{\sqrt{2\pi}} \frac{k^{3/2}}{n-p+1}+ \paren{ \frac{4}{\sqrt{2\pi}}\frac{p\sqrt{k}}{n} }^2 \enspace ,
	\end{align*}
	which concludes the proof.

	The proof of Ineq.~\eqref{ineq.upper.bound.squared.deviation.moment} is more intricate and has been postponed to Appendix~\ref{appendix.proof.consistency}.
\end{proof}

Keeping in mind that $\E\croch{ \Rh_{p,n}} = R( \A_k^{\D_{n-p}})$, the right-hand side of Ineq.~\eqref{Equ : BiasLpO} is an upper bound on the bias of the L$p$O estimator, that is on the difference between the risks of the classifiers built from respectively $n-p$ and $n$ points. Therefore, the fact that this upper bound increases with $p$ is reliable since the classifiers $\A_k^{\D_{n-p+1}}(\cdot)$ and $\A_k^{\Dn}(\cdot)$ can become more and more different from one another as $p$ increases.
More precisely, the upper bound in Ineq.~\eqref{Equ : BiasLpO} goes to 0 provided $p\sqrt{k}/n$ does. With the additional restriction $p\leq \sqrt{k}$, this reduces to the usual condition $k/n\to 0$ as $n\to+\infty$  \citep[see][Chap.~6.6 for instance]{DeGyLu_1996}.
The monotonicity of this upper upper bound with respect to $k$ can seem somewhat unexpected. One could think that the two classifiers would become more and more ``similar'' to each other as $k$ increases.
However it can be proved that, in some sense, this dependence cannot be improved in the present distribution-free framework (see Proposition~\ref{res.counter.example.bias} and Figure~\ref{Fig: EvolBias}).
%

%

%
Note that an upper bound similar to that of Ineq.~\eqref{eq.EQM.LpO} can be easily derived for any order-$q$ moment ($q\geq 2$) at the price of increasing the constants by using $(a+b)^q \leq 2^{q-1} (a^q + b^q)$, for every $a,b\geq 0$.
We also emphasize that Ineq.~\eqref{eq.EQM.LpO} allows us to control the discrepancy between the L$p$O estimator and the risk of the $k$NN classifier, that is the expectation of its classification error.
Ideally we would have liked to replace the risk $R(\hat f_k)$ by the prediction error $L(\hat f_k)$. But with our strategy of proof, this would require an additional distribution-free concentration inequality on the prediction error of the $k$NN classifier. To the best of our knowledge, such a concentration inequality is not available up to now.

Upper bounding the squared difference between the L$p$O estimator and the prediction error is precisely the purpose of  Ineq.~\eqref{ineq.upper.bound.squared.deviation.moment}. Proving the latter inequality requires a completely different strategy which can be traced back to an earlier proof by \citet[][see the proof of Theorem~2.1]{RogersWagner78} applying to the L1O estimator.
Let us mention that Ineq.~\eqref{ineq.upper.bound.squared.deviation.moment} combined with the Jensen inequality lead to a less accurate upper bound than Ineq.~\eqref{Equ : BiasLpO}.

Finally the apparent difference between the upper bounds in Ineq.~\eqref{eq.EQM.LpO} and~\eqref{ineq.upper.bound.squared.deviation.moment} results from the completely different schemes of proof.
The first one allows us to derive general upper bounds for all centered moments of the L$p$O estimator, but exhibits a worse dependence with respect to $k$. By contrast the second one is exclusively dedicated to upper bounding the mean squared difference between the prediction error and the L$p$O estimator and leads to a smaller $\sqrt{k}$.
However (even if probably not optimal), the upper bound used in Ineq.~\eqref{eq.EQM.LpO} still enables to achieve minimax rates over some H\"older balls as proved by Proposition~\ref{res.optimal.minimax.rate}.

\subsection{Lower bounds}

\subsubsection{Bias of the L$1$O estimator}

The purpose of the next result is to provide a counter-example highlighting that the upper bound of Eq.~\eqref{Equ : BiasLpO} cannot be improved in some sense. We consider the following discrete setting where $\mathcal{X}=\acc{0,1}$ with $\pi_0 = P\croch{X=0}$, and we define $\eta_0 = \P\croch{ Y=1 \mid X=0}$ and $\eta_1 = \P\croch{ Y=1 \mid X=1}$. In what follows this two-class generative model will be referred to as the discrete setting \textbf{DS}.

Note that $(i)$ the 3 parameters $\pi_0, \eta_0$ and $\eta_1$ fully describe the joint distribution $P_{(X,Y)}$, and $(ii)$ the distribution of \textbf{DS} satisfies the strong margin assumption of \cite{MassNedelec_2006} if both $\eta_0$ and $\eta_1$ are chosen away from 1/2. However this favourable setting has no particular effect on the forthcoming lower bound except a few simplifications along the calculations.

\begin{prop}\label{res.counter.example.bias}
Let us consider the \textbf{DS} setting with $\pi_0 = 1/2$, $\eta_0 = 0$ and $\eta_1 = 1$, and assume that $k$ is odd.
Then there exists a numeric constant $C>1$ independent of $n$ and $k$ such that, for all $n/2 \leq k \leq n-1$, the $k$NN classifiers $\A_k^{\Dn}$ and $\A_k^{\D_{n-1}}$ satisfy
\begin{align*}
\E\croch{ L\paren{\A_k^{\Dn}} - L\paren{\A_k^{\D_{n-1}}}  } \geq C \frac{\sqrt{k}}{n} \enspace\cdot
\end{align*}
\end{prop}
The proof of Proposition~\ref{res.counter.example.bias} is provided in Appendix~\ref{sec.proof.couter.example}. 
The rate $\sqrt{k}/n$ in the right-hand side of Eq.~\eqref{Equ : BiasLpO} is then achieved under the generative model \textbf{DS} for any $k\geq n/2$.
As a consequence this rate cannot be improved without any additional assumption, for instance on the distribution of the $X_i$s.
See also Figure \ref{Fig: EvolBias} below and related comments.

\paragraph{Empirical illustration}

To further illustrate the result of Proposition~\ref{res.counter.example.bias}, we simulated data according to the \textbf{DS} setting, for different values of $n$ ranging from 100 to 500 and different values of $k$ ranging from 5 to $n-1$.

Figure~\ref{Fig: EvolBias} (a) displays the evolution of the absolute bias $\left|\E\croch{ L\paren{\A_k^{\Dn}} - L\paren{\A_k^{\D_{n-1}}}  }\right|$ as a function of $k$, for several values of $n$ (plain curves).
The absolute bias is a nondecreasing function of $k$, as suggested by the upper bound provided in Eq.~\eqref{Equ : BiasLpO} which is also plotted (dashed lines) to ease the comparison.
Importantly, the non-decreasing behavior of the absolute bias is not always restricted to high values of $k$ (w.r.t. $n$), as illustrated in Figure~\ref{Fig: EvolBias} (b) which corresponds to the same \textbf{DS} setting but with parameter values $(\pi_0,\eta_0,\eta_1)=(0.2,0.2,0.9)$. In particular the non-decreasing behavior now appears for a range of values of $k$ that are lower than $n/2$. 

Note that a rough idea about the location of the peak, denoted by $k_{peak}$, can be deduced as follows in the simple case where $\eta_0=0$ and $\eta_1=1$.
\begin{itemize}
	\item For the peak to arise, the two classifiers (based on $n$ and respectively $n-1$ observations) have to disagree the most strongly. 
	
	\item This requires one of the two classifiers -- say the first one -- to have ties among the $k$ nearest neighbors of each label in at least one of the two cases $X=0$ or $X=1$. 
	
	\item With $\pi_0<0.5$, then ties will most likely occur for the case $X=0$. Therefore the discrepancy between the two classifiers will be the highest at any new observation $x_0=0$. 
		
	\item For the tie situation to arise at $x_0$, half of its neighbors have to be 1.
	This only occurs if $(i)$ $k>n_0$ (with $n_0$ the number of observations such that $X=0$ in the training set), and $(ii)$ $k_0\eta_0 + k_1\eta_1 = k/2$, where $k_0$ (resp. $k_1$) is the number of neighbors of $x_0$ such that $X=0$ (resp. $X=1$).
	
	\item Since $k>n_0$, one has $k_0=n_0$ and the last expression boils down to $\displaystyle{k=\frac{n_0(\eta_1-\eta_0)}{\eta_1-1/2}} \enspace \cdot$
	
	\item For large values of $n$, one should have $n_0\approx n\pi_0$, that is the peak should appear at $\displaystyle{k_{peak}\approx\frac{ n\pi_0(\eta_1-\eta_0)}{\eta_1-1/2}} \enspace \cdot$

\end{itemize}
In the setting of Proposition~\ref{res.counter.example.bias}, this reasoning remarkably yields $k_{peak}\approx n$, while it leads to $k_{peak}\approx 0.4n$ in the setting of Figure~\ref{Fig: EvolBias} (b), which is close to the location of the observed peaks.
This also suggests that even smaller values of $k_{peak}$ can arise by tuning the parameter $\pi_0$ close to 0.
Let us mention that very similar curves have been obtained for a Gaussian mixture model with two disjoint classes (not reported here). 
On the one hand this empirically illustrates that the $\sqrt{k}/n$ rate is not limited to the discrete setting \textbf{DS}. On the other hand, all of this confirms that this rate cannot be improved in the present distribution-free framework.

Let us finally consider Figure~\ref{Fig: EvolBias} (c), which displays the absolute bias as a function of $n$ where $k = \floor{\mbox{Coef} \times n}$ for different values of $\mbox{Coef}$, where $\floor{\cdot}$ denotes the integer part.
With this choice of $k$, Proposition~\ref{res.counter.example.bias} implies that the absolute bias should decrease at a $1/\sqrt{n}$ rate, which is supported by the plotted curves. By contrast,  panel (d) of Figure~\ref{Fig: EvolBias} illustrates that choosing smaller values of $k$, that is $k = \floor{\mbox{Coef} \times \sqrt{n}}$, leads to a faster decreasing rate.

\begin{figure}
	\hspace*{-1cm}
\begin{tabular}{cc}
\includegraphics[width=8cm]{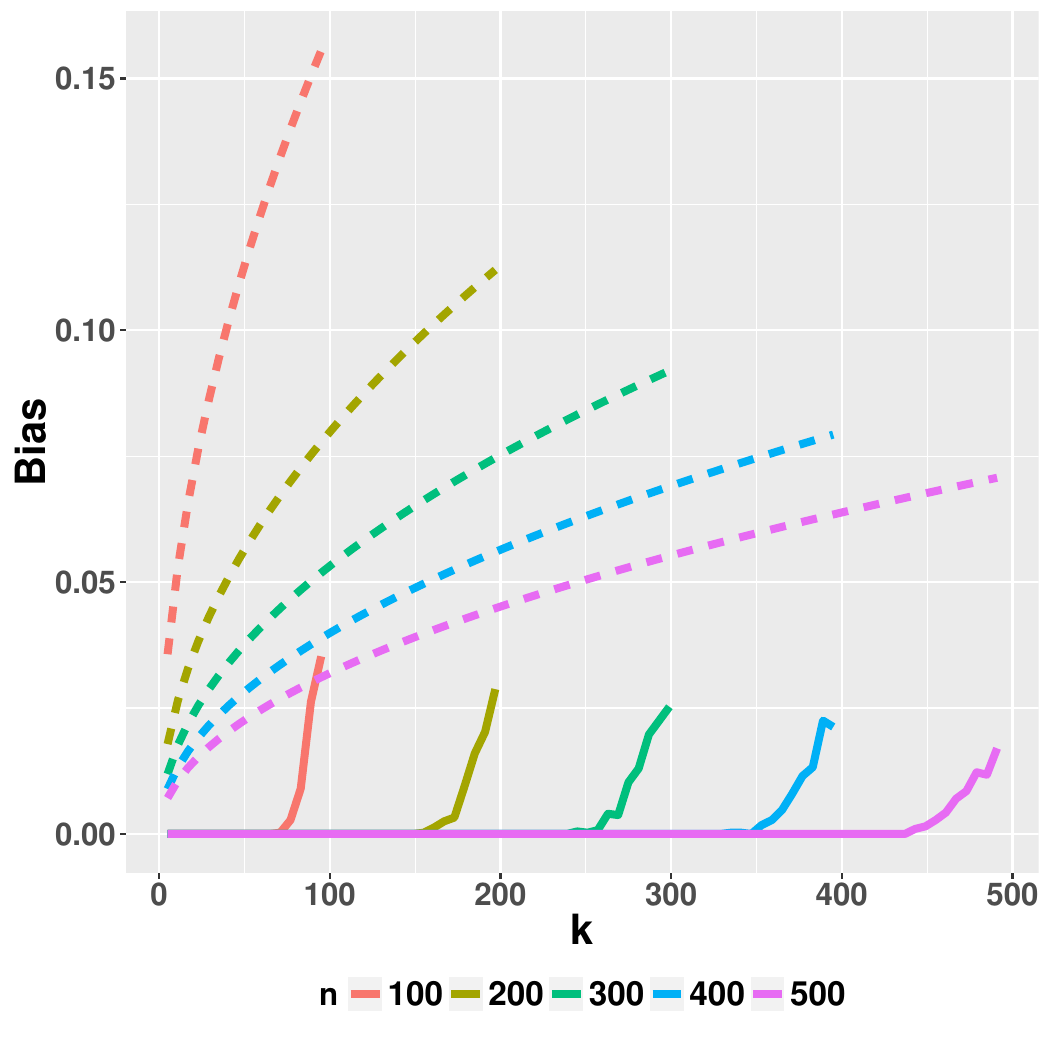} &
\includegraphics[width=8cm]{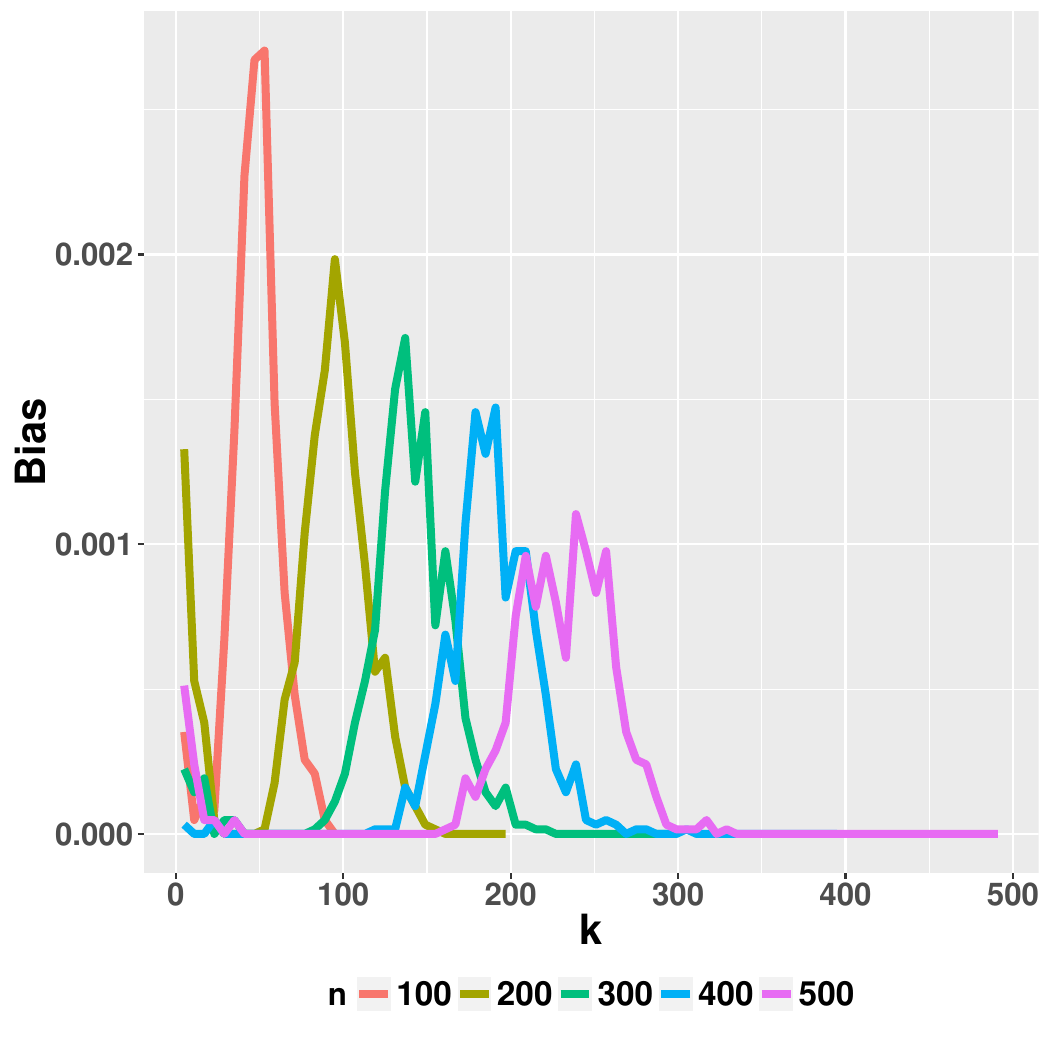} \\
(a) & (b)\\
\includegraphics[width=8cm]{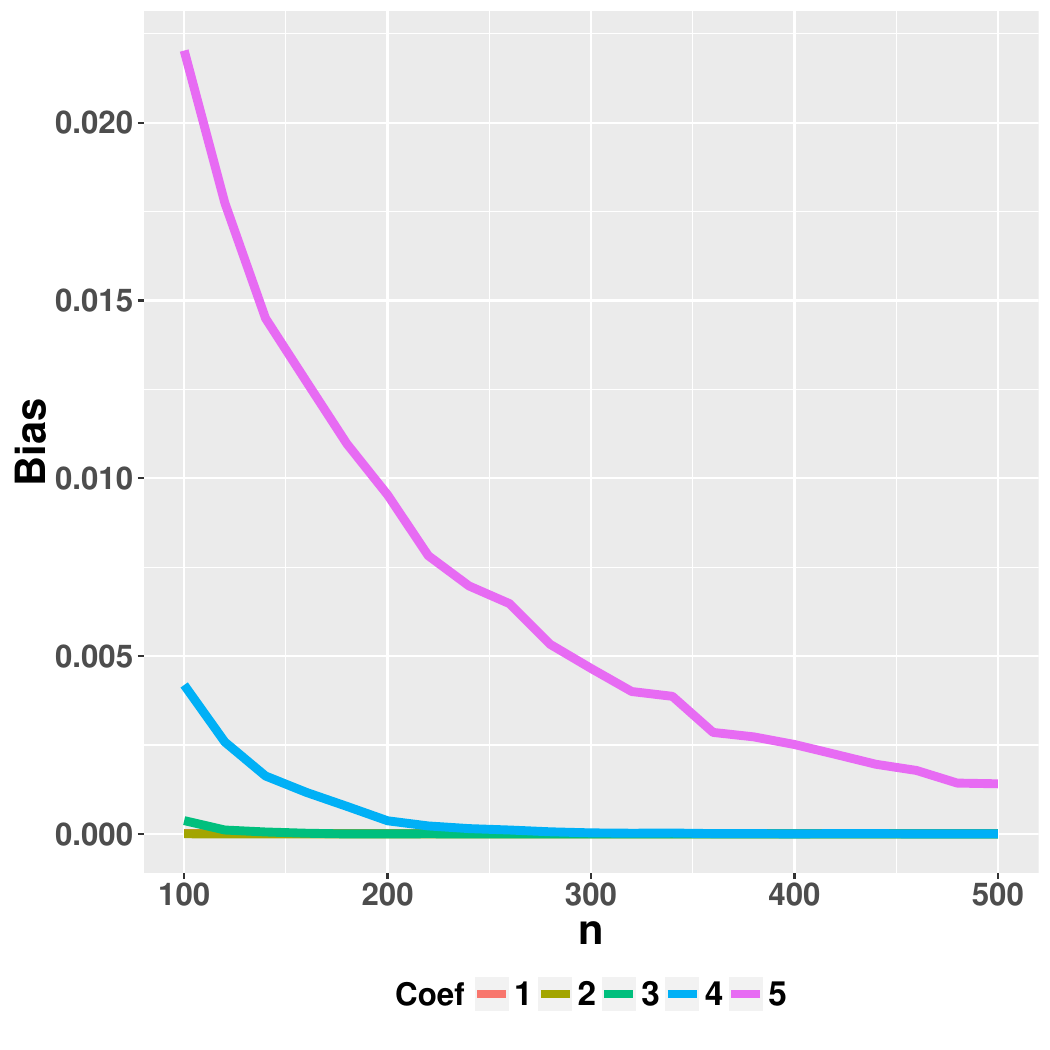} &
\includegraphics[width=8cm]{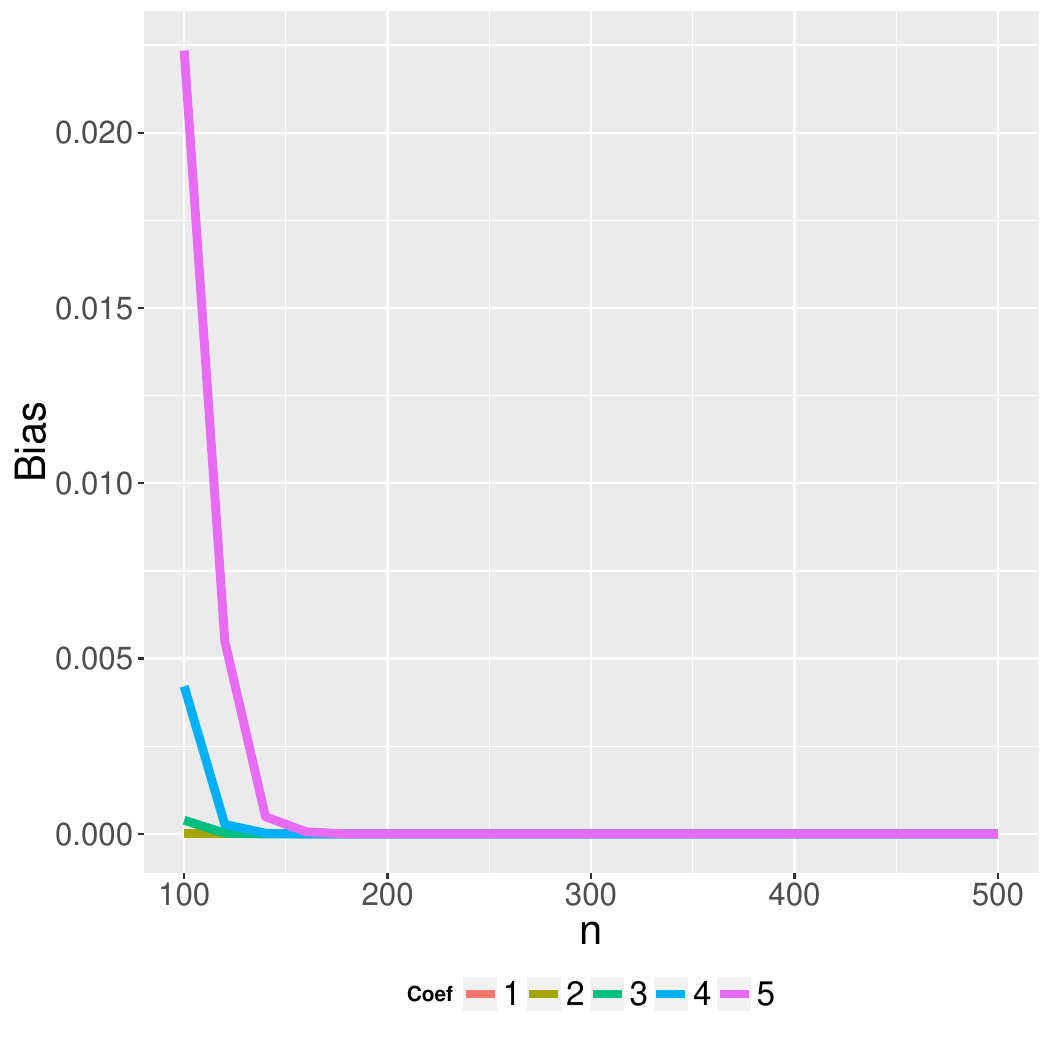}\\
(c) & (d)
\end{tabular}
\caption{{\bf (a)} Evolution of the absolute value of the bias as a function of $k$, for different values of $n$ (plain lines). The dashed lines correspond to the upper bound obtained in \eqref{Equ : BiasLpO}. 
	{\bf (b)} Same as previous, except that data were generated according to the \textbf{DS} setting with parameters $(\pi_0,\eta_0,\eta_1)=(0.2,0.2,0.9)$. Upper bounds are not displayed in order to fit the scale of the absolute bias.
{\bf (c)} Evolution of the absolute value of the bias with respect to $n$, when $k$ is chosen such that $k= \floor{\mbox{Coef} \times n }$ ($\floor{\cdot}$ denotes the integer part). The different colors correspond to different values of $\mbox{Coef}$. 
{\bf (d)} Same as previous, except that $k$ is chosen such that $k=\floor{\mbox{Coef} \times \sqrt{n}}$.}
\label{Fig: EvolBias}
\end{figure}

\subsubsection{Mean squared error}

Following an example described by \cite{DevWag_79potential}, we now provide a lower bound on the minimal convergence rate of the mean squared error \cite[see also][Chap.~24.4, p.415  for a similar argument]{DeGyLu_1996}.
\begin{prop}\label{res.lower.bound.EQM}
	Let us assume $n$ is even, and that $P( Y=1\mid X )  =P(Y=1 )  = 1/2$ is independent of $X$.
Then for $k=n-1$ ($k$ odd), it results
\begin{align*}
\E\croch{ \paren{ \Rh_{1,n}-L(\hat f_k) }^2 } = \int_{0}^1 2 t \cdot \P\croch { \abs{ \Rh_{1,n}-L(\hat f_k) }> t }\, dt \geq \frac{1}{8\sqrt{\pi}} \cdot \frac{1}{\sqrt{n}} \enspace \cdot
\end{align*}
\end{prop}
From the upper bound of order $\sqrt{k}/n$ (with $p=1$) provided by Ineq.~\eqref{ineq.upper.bound.squared.deviation.moment}, choosing $k=n-1$ leads to the same $1/\sqrt{n}$ rate as that of Proposition~\ref{res.lower.bound.EQM}.
This suggests that, at least for very large values of $k$, the  $\sqrt{k}/n$ rate is of the right order and cannot be improved in the distribution-free framework.

\subsection{Minimax rates}

Let us conclude this section with a corollary, which provides a finite-sample bound on the gap between $\Rh_{p,n}$ and $R(\hat f_k) = \E\croch{L(\hat f_k)}$ with high probability. It is stated under the same restriction on $p$ as the previous Theorem~\ref{Prop : Consistency} it is based on, that is for $p\leq \sqrt{k}$.
\begin{cor}\label{Prop : UperBound}
With the notation of Theorems~\ref{Prop : ConcentrationLpO:bis} and~\ref{Prop : Consistency}, let us assume $p,k \geq 1$ with $p \leq \sqrt{k}$, $\sqrt{k}+k\leq n$, and $ p \leq n/2+1$.
Then for every $x>0$, there exists an event with probability at least $1 - 2e^{-x}$ such that
\begin{align}\label{ineq.confidence.bound}
\abs{ R(\hat{f}_k)) - \Rh_{p,n}}  & \leq \sqrt{ \frac{\Delta^2 k^2 }{n \paren{1-\frac{p-1}{n}} } x } + \frac{4}{\sqrt{2\pi}}\frac{p\sqrt{k}}{n} \enspace,
\end{align}
where $\hat f_k=\A_k^{\Dn}(\cdot)$.
\end{cor}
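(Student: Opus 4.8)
The plan is to combine the sub-Gaussian concentration of $\Rhp(\hat f_k)$ around its mean, given by Ineq.~\eqref{ineq.exponential.concentration.small.p} of Theorem~\ref{Prop : ConcentrationLpO:bis}, with the control of the bias provided by Ineq.~\eqref{Equ : BiasLpO} of Theorem~\ref{Prop : Consistency}. The assumption $p\leq n/2+1$ merely guarantees that we are in the regime where the sub-Gaussian bound \eqref{ineq.exponential.concentration.small.p} is the relevant one to invoke (rather than \eqref{ineq.exponential.concentration.large.p}).

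First I would introduce $\E\croch{\Rhp(\hat f_k)}$ as a pivot and apply the triangle inequality,
\begin{align*}
\abs{ R(\hat f_k) - \Rhp(\hat f_k) } \leq \abs{ R(\hat f_k) - \E\croch{\Rhp(\hat f_k)} } + \abs{ \E\croch{\Rhp(\hat f_k)} - \Rhp(\hat f_k) } ,
\end{align*}
thereby separating a deterministic bias term from a purely stochastic fluctuation term. Since $R(\hat f_k)=\E\croch{L(\hat f_k)}$ by definition, the first (bias) term equals $\abs{ \E\croch{ \Rhp(\hat f_k) - L(\hat f_k) } }$, which is bounded by $\frac{4}{\sqrt{2\pi}}\frac{p\sqrt{k}}{n}$ thanks to \eqref{Equ : BiasLpO}. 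This disposes of the first summand with no extra work.

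For the stochastic term I would invert the concentration inequality. A union bound over the two one-sided events controlled by the max-form in \eqref{ineq.exponential.concentration.small.p} yields
\begin{align*}
\prob{ \abs{ \Rhp(\hat f_k) - \E\croch{\Rhp(\hat f_k)} } > t } \leq 2\exp\paren{ -(n-p+1)\frac{t^2}{\Delta^2 k^2} } .
\end{align*}
Choosing $t$ so that the right-hand side equals $2e^{-x}$, that is $t = \sqrt{\Delta^2 k^2 x/(n-p+1)}$, and rewriting $n-p+1 = n\paren{1-(p-1)/n}$, defines an event of probability at least $1-2e^{-x}$ on which $\abs{\Rhp(\hat f_k)-\E\croch{\Rhp(\hat f_k)}}$ does not exceed the first term of \eqref{ineq.confidence.bound}. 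Adding back the bias bound on that same event gives the claimed inequality~\eqref{ineq.confidence.bound}.

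There is no genuine obstacle here: the result is essentially bookkeeping on top of the two previously established theorems. The only two points requiring care are (i) correctly turning the one-sided ``$\vee$'' statement of Theorem~\ref{Prop : ConcentrationLpO:bis} into a two-sided deviation, which is precisely where the factor $2$ in the probability $2e^{-x}$ originates, and (ii) identifying the mean gap $\E\croch{\Rhp(\hat f_k)}-R(\hat f_k)$ with the bias already controlled in Theorem~\ref{Prop : Consistency}, so that no new estimate has to be produced.
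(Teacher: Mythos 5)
Your proposal is correct and follows essentially the same route as the paper: a triangle inequality around the pivot $\E\croch{\Rhp(\hat f_k)}$, the bias bound \eqref{Equ : BiasLpO} for the deterministic term, and the inversion of the sub-Gaussian inequality \eqref{ineq.exponential.concentration.small.p} (with a union bound giving the factor $2e^{-x}$) for the stochastic term. The identification $n-p+1 = n\paren{1-\frac{p-1}{n}}$ and the role of the assumption $p\leq n/2+1$ are also exactly as in the paper.
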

\begin{proof}[Proof of Corollary~\ref{Prop : UperBound}]
	Ineq.~\eqref{ineq.confidence.bound} results from combining
	the exponential concentration result derived for $\Rh_{p,n}$, namely Ineq.~\eqref{ineq.exponential.concentration.small.p} (from Theorem~\ref{Prop : ConcentrationLpO:bis}) and the upper bound on the bias, that is Ineq.~\eqref{Equ : BiasLpO}.
	\begin{align*}
	\abs{  R(\hat{f}_k)) - \Rh_{p,n} } & \leq  \abs{  R(\hat{f}_k)) - \E\croch{ \Rh_{p,n} } } + \abs{ \E\croch{ \Rh_{p,n} } -  \Rh_{p,n} }  \\
	& \leq  \frac{4}{\sqrt{2\pi}}\frac{p\sqrt{k}}{n}  +\sqrt{ \frac{\Delta^2 k^2 }{n-p+1} x }  \enspace\cdot
	\end{align*}
\end{proof}
Note that the right-hand side of Ineq.~\eqref{ineq.confidence.bound} could be used to derive bounds on $R(\hat f_k)$ that seem similar to confidence bounds.
However we do not recommend doing this in practice for several reasons.
On the one hand, Ineq.~\eqref{ineq.confidence.bound} results from the repeated use of
concentration inequalities where numeric constants are not optimized at all, which leads to require a large sample size $n$ for the deviation terms to be small in practice.
On the other hand, explicit numeric constants such as $\Delta^2$ in Corollary~\ref{Prop : UperBound} exhibit a dependence on $\gamma_d \approx 4.8^d-1$, which becomes exponentially large as $d$ increases. Proving that this dependence can be weakened or not remains a completely open question at this stage. Nevertheless one can highlight that, for a given $n$, increasing $d$ will quickly make the deviation term larger than 1, whereas both $R(\hat f_k)$ and $\Rh_{p,n}$ belong to $[0,1]$.

The right-most term of order $\sqrt{k}/n$ in Ineq.~\eqref{ineq.confidence.bound} results from the bias. This is a necessary price to pay which cannot be improved in the present distribution-free framework according to Proposition~\ref{res.counter.example.bias}.
Besides combining the restriction $p\leq \sqrt{k}$ with the usual consistency constraint $k/n = o(1)$ leads to the conclusion that small values of $p$ (w.r.t. $n$) have almost no effect on the convergence rate of the L$p$O estimator. Weakening the key restriction $p\leq \sqrt{k}$ would be necessary to potentially nuance this conclusion.

\medskip

In order to highlight the interest of the above deviation inequality, let us deduce an optimality result in terms of minimax rate.
In the following statement, Corollary~\ref{Prop : UperBound} is used to prove that, uniformly with respect to $k$, the L$p$O estimator $\Rh_{p,n}$ and the risk $R(\hat f_k)$ of the $k$NN classifier remain close to each other with high probability.
\begin{prop}\label{res.optimal.minimax.rate}
	With the same notation as Corollary~\ref{Prop : UperBound}, for every $C>1$ and $\theta>0$, there exists an event of probability at least $1-2\cdot n^{-(C-1)}$ on which, for any $p,k \geq 1$ such that $p\leq \sqrt{k}$, $k+\sqrt{k}\leq n$, and $p\leq n/2+1$, the L$p$O estimator of the $k$NN classifier satisfies
		 \begin{align}
		 & \paren{1-\theta} \croch{ R(\hat{f}_k) - L^\star  } - \frac{\theta^{-1}\Delta^2 C }{4}  \frac{k^2\log(n)}{n\paren{R(\hat{f}_k) - L^\star}} - \frac{4}{\sqrt{2\pi} } \frac{p\sqrt{k}}{n}  \nonumber \\
		 &\leq \Rhp(\A_k,\Dn) - L^\star    \leq  \paren{1+\theta} \croch{ R(\hat{f}_k) - L^\star  }  + \frac{\theta^{-1}\Delta^2 C }{4}  \frac{k^2\log(n)}{n\paren{R(\hat{f}_k) - L^\star}} + \frac{4}{\sqrt{2\pi} } \frac{p\sqrt{k}}{n} ,\label{ineq.uniform.control.LpO.Holder}
		 \end{align}
	where $L^\star$ denotes the classification error of the Bayes classifier.

Furthermore if one assumes the regression function $\eta$ belongs to a H\"older ball $\mathcal{H}(\tau,\alpha)$ for some $\alpha \in ]0,d/4[$ (recall that $X_i\in \R^d$) and $\tau>0$, then choosing $k = k^\star = k_0 \cdot n^{\frac{2\alpha}{2\alpha+d}}$ leads to
 		 \begin{align}\label{eq.uniform.control.LpO.Holder.equiv}
 \Rhp(\A_{k^\star},\Dn) - L^\star   \sim_{n\to +\infty}  R(\hat{f}_{k^\star}) - L^\star  .
 		 \end{align}
\end{prop}
Ineq.~\eqref{ineq.uniform.control.LpO.Holder} gives a uniform control (over $k$) of the gap between the excess risk	$R(\hat{f}_{k}) - L^\star $ and the corresponding L$p$O estimator $ \Rhp(\hat{f}_{k}) - L^\star$ with high probability.
The decreasing rate (in $n^{-(C-1)}$) of this probability is directly related to the $\log(n)$ factor in the lower and upper bounds. This decreasing rate could be made faster at the price of increasing the exponent of the $\log(n)$ factor.
In a similar way the numeric constant $\theta$ has no precise meaning and can be chosen as close to 0 as we want, leading to increase one of the other deviation terms by a numeric factor $\theta^{-1}$. For instance one could choose $\theta=1/\log(n)$, which would replace the $\log(n)$ by a $\paren{\log n}^2$.

The equivalence stated by \eqref{eq.uniform.control.LpO.Holder.equiv} results from knowing that this choice $k=k^\star$ makes the $k$NN classifier achieve the minimax rate $n^{-\frac{\alpha}{2\alpha+d}}$ over H\"older balls with smoothness parameter $\alpha \in ]0,d/4[$ \citep[see Theorems~3.3 and~3.5 in][]{Aud_Tsyb_2007}. Therefore it is not difficult to check that the other deviation terms are negligible with respect to the excess risk 	 $R(\hat{f}_{k}) - L^\star $ for $k=k^\star$.

\begin{proof}[Proof of Proposition~\ref{res.optimal.minimax.rate}]
Let us define $K\leq n$ as the maximum value of $k$ and assume $x_k = C \cdot \log(n)$ (for some constant $C>1$) for any $1\leq k\leq K$.
Let us also introduce the event
\begin{align*}
\Omega_n = \acc{ \forall 1\leq k\leq K,\  \abs{ R(\hat{f}_k) - \Rhp(\A_k,\Dn) } \leq \sqrt{ \Delta^2 \frac{k^2}{n} x_k } + \frac{4}{\sqrt{2\pi} } \frac{p\sqrt{k}}{n} }.
\end{align*}
Then $\P\croch{ \Omega_n^c  } \leq \frac{1}{n^{C-1}} \to 0$, as $n\to +\infty$, since a union bound leads to
	\begin{align*}
	\sum_{k=1}^K e^{-x_k} & = \sum_{k=1}^K e^{-C \cdot \log(n) }  = K \cdot e^{-C \cdot \log(n) } \leq  e^{- (C-1) \cdot \log(n) }  = \frac{1}{n^{C-1}} \enspace \cdot
	\end{align*}

Furthermore combining (for $a,b>0$) the inequality $ab \leq a^2 \theta^2  + b^2 \theta^{-2}/4 $ for every $\theta>0$ with $\sqrt{a+b}\leq \sqrt{a}+\sqrt{b}$, it results that
\begin{align*}
\sqrt{ \Delta^2 \frac{k^2}{n} x_k } & \leq \theta \paren{R(\hat{f}_k) - L^\star} + \frac{\theta^{-1}}{4} \Delta^2 \frac{k^2}{n\paren{R(\hat{f}_k) - L^\star}} x_k \\
& \leq \theta \paren{R(\hat{f}_k) - L^\star} + \frac{\theta^{-1}}{4} \Delta^2 \frac{k^2}{n\paren{R(\hat{f}_k) - L^\star}} C \cdot \log(n) ,
\end{align*}
hence Ineq.~\eqref{ineq.uniform.control.LpO.Holder}.

\medskip

Let us now choose $k=k^\star$. Then Theorems~3.3 and~3.5 in \cite{Aud_Tsyb_2007} combined with Theorem~7 in \cite{Chaudhuri_Dasgupta2014} provide that the minimax rate of the $k$NN classifier is
\begin{align*}
\paren{R(\hat{f}_{k^\star}) - L^\star} \asymp n^{-\frac{\alpha}{2\alpha+d}} ,
\end{align*}
where $a\asymp b$ means there exist numeric constants $l,u>0$ such that $l\cdot b \leq a \leq u\cdot b$.

It is then easy to check that
\begin{itemize}
	\item $\frac{\theta^{-1}}{4} \Delta^2 \frac{{k^\star}^2}{n\paren{R(\hat{f}_{k^\star}) - L^\star}} C \cdot \log(n) = \frac{ C \theta^{-1}\Delta^2 k_0}{4}  \cdot \paren{ n^{ -\frac{ d-3\alpha }{2\alpha+d}}  \log(n) }  = o_{n\to =\infty}\paren{R(\hat{f}_{k^\star}) - L^\star}$,
	
	\item 	 $\frac{p\sqrt{k^\star}}{n} \leq \frac{k^\star}{n} = k_0 \cdot n^{-\frac{d}{2\alpha+d}}= o_{n\to +\infty}\paren{ R(\hat{f}_{k^\star}) - L^\star } $.
\end{itemize}
The desired conclusion \eqref{eq.uniform.control.LpO.Holder.equiv} finally results from choosing $\theta=1/(\log n)$.
\end{proof}


\section{Discussion} \label{Section: Discussion}


The present work provides several new results quantifying the performance of the L$p$O estimator applied to the $k$NN classifier. 
By exploiting the connexion between L$p$O and U-statistics (Section~\ref{Section: Context}), the polynomial and exponential inequalities derived in Sections~\ref{Section: Polynomial} and~\ref{Section: Deviations} give some new insight on the concentration of the L$p$O estimator around its expectation for different regimes of $p/n$.
In Section~\ref{Section:Old}, these results serve for instance to conclude to the consistency of the L$p$O estimator towards the risk (or the classification error rate) of the $k$NN classifier (Theorem~5.1). They also allow us to establish the asymptotic equivalence between the L$p$O estimator (shifted by the Bayes risk $L^\star$) and the excess risk over some H\"older class of regression functions (Proposition~\ref{res.optimal.minimax.rate}).

\medskip

It is worth mentioning that the upper-bounds derived in Sections 4 and 5 --- see for instance Theorem~\ref{Prop : Consistency} --- can be minimized by choosing $p=1$, suggesting that the L1O estimator is optimal in terms of risk estimation when applied to the $k$NN classification algorithm.
This observation corroborates the results of the simulation study presented in \citet{CelisseMaryHuard11}, where it is empirically shown that small values of $p$ (and in particular $p=1$) lead to the best estimation of the risk, whatever the value of parameter $k$ or the level of noise in the data.
The suggested optimality of L1O (for risk estimation) is also consistent with results by \citet{Burm89} and \citet{Celi_2014}, where it is proved that L1O is asymptotically the best cross-validation procedure to perform risk estimation in the context of low-dimensional regression and density estimation respectively.
%

\medskip

Alternatively, the L$p$O estimator can also be used as a data-dependent calibration procedure to choose $k$: the value $\hat{k}_p$ leading to the minimum L$p$O estimate is selected.
Although the focus of the present paper is different, it is worth mentioning that the concentration results established in Section~\ref{Section: Deviations} are a significant early step towards deriving theoretical guarantees on L$p$O as a model selection procedure.
Indeed, exponential concentration inequalities have been a key ingredient to assess model selection consistency or model selection efficiency in various contexts (see for instance \citet{Celi_2014} or \citet{ArlotLerasle2012} in the density estimation framework).
%
%
%
%
%
Still theoretically investigating the behavior of $\hat{k}_p$ requires some further dedicated developments.
One first step towards such results is to derive a tighter upper bound on the bias between the L$p$O estimator and the risk.
The best known upper bound currently available is derived from \citet[][see Lemma~\ref{Lemma : HOresult} in the present paper]{DevroyeWagner80}. Unfortunately it does not fully capture the true behavior of the L$p$O estimator with respect to $p$ (at least as $p$ becomes large) and could be improved in particular for $p> \sqrt{k}$ as emphasized in the comments following Theorem~\ref{Prop : Consistency}.
Another important direction for studying the model selection behavior of the L$p$O procedure is to prove a concentration inequality for the classification error rate of the $k$NN classifier around its expectation.
While such concentration results have been established for the $k$NN algorithm in the (fixed-design) regression framework \citep{ArlotBach09}, deriving similar results in the classification context remains a challenging problem to the best of our knowledge.

%
%
%
%
%
%
%
%
%


\bibliographystyle{alpha}
\bibliography{bibliography}


\newpage
\appendix \label{Section: appendix}
\section{Proofs of polynomial moment upper bounds}

\subsection{Proof of Theorem~\ref{Prop : upper.bound.lpo.loo}}
\label{Appendix: UpperBoundLpO.L1O}
The proof relies on Proposition~\ref{Prop : HoeffdingDecomposition} that allows to relate the L$p$O estimator to a sum of independent random variables.
In the following, we distinguish between the two settings $q=2$ (where exact calculations can be carried out), and $q>2$ where only upper bounds can be derived.

When $q>2$, our proof deals separately with the cases $p\leq n/2+1$ and $p>n/2+1$. 
In the first one, a straightforward use of Jensen's inequality leads to the result.
In the second setting, one has to be more cautious when deriving upper bounds. This is done by using the more sophisticated Rosenthal's inequality, namely Proposition~\ref{prop.rosenthal.inequality}.

\subsubsection{Exploiting Proposition~\ref{Prop : HoeffdingDecomposition}}
According to the proof of Proposition~\ref{Prop : HoeffdingDecomposition}, it arises that the L$p$O estimator can be expressed as a $U$-statistic since
\begin{eqnarray*}
\Rh_{p,n} = \frac{1}{n!} \sum_{\sigma} W\gp{Z_{\sigma(1)},\ldots,Z_{\sigma(n)}} \enspace,
\end{eqnarray*}
with
\begin{eqnarray*}
W\paren{Z_1,\ldots,Z_n} &=& \floor{ \frac{n}{m}}^{-1} \sum_{a = 1} ^{\floor{ \frac{n}{m}}} h_m\paren{ Z_{(a-1)m+1},\ldots,Z_{am}}\qquad \mbox{(with $m=n-p+1$)} \\
\text{and }\ \ \  h_m\paren{ Z_{1},\ldots,Z_{m}} &=& \frac{1}{m} \sum_{i=1}^m \1_{\ga{ \A^{\D_m^{(i)}}( X_i) \neq Y_i }} = \Rh_{1,n-p+1} \enspace,
\end{eqnarray*}
where $\A^{\D_m^{(i)}}( .)$ denotes the classifier based on sample $\D_m^{(i)}=(Z_{1},\ldots,Z_{i-1},Z_{i+1},\ldots,Z_{m})$.
Further centering the L$p$O estimator, it comes
\begin{eqnarray*}
\Rh_{p,n} -\esp{ \Rh_{p,n} } = \frac{1}{n!} \sum_{\sigma} \bar W\gp{Z_{\sigma(1)},\ldots,Z_{\sigma(n)}},
\end{eqnarray*}
where $\bar W(Z_1,\ldots,Z_n) = W(Z_1,\ldots,Z_n) - \E\croch{W(Z_1,\ldots,Z_n)}$.

Then with $\bar h_m(Z_1,\ldots,Z_m) = h_m(Z_1,\ldots,Z_m) - \E\croch{ h_m(Z_1,\ldots,Z_m) }$, one gets
\begin{align}
\esp{ \abs{ \Rh_{p,n} - \esp{\Rh_{p,n}} }^{q}  }
 & \leq  \esp{ \abs{  \bar W\paren{Z_1,\ldots,Z_n} }^{q}  } \quad \mathrm{(Jensen's\ inequality)} \nonumber \\
 & =  \esp{  \abs{ \floor{ \frac{n}{m}}^{-1}  \sum_{i = 1} ^{\floor{ \frac{n}{m}}}  \bar h_m\paren{ Z_{(i-1)m+1},\ldots,Z_{im}} }^{q}  }  \label{ineq.Lpo.Ustat.Jensen} \\
 & =  \floor{ \frac{n}{m}}^{-q} \esp{ \abs{ \sum_{i = 1} ^{\floor{ \frac{n}{m}}}  \bar h_m\paren{ Z_{(i-1)m+1},\ldots,Z_{im}} }^{q}  } \nonumber .
\end{align}

\subsubsection{The setting $q=2$}

If $q=2$, then by independence it comes
\begin{align*}
\esp{ \abs{ \Rh_{p,n} - \esp{\Rh_{p,n}} }^{q}  }
& \leq \floor{ \frac{n}{m}}^{-2} \Var\paren{  \sum_{i = 1} ^{\floor{ \frac{n}{m}}}  h_m\paren{ Z_{(i-1)m+1},\ldots,Z_{im}}  } \\
& = \floor{ \frac{n}{m}}^{-2} \sum_{i = 1} ^{\floor{ \frac{n}{m}}} \Var\croch{  h_m\paren{ Z_{(i-1)m+1},\ldots,Z_{im}}  } \\
& =    \floor{ \frac{n}{m}}^{-1}  \Var\paren{  \Rh_1(\A,Z_{1,n-p+1}) } ,
\end{align*}
which leads to the result.

\subsubsection{The setting $q>2$}

\paragraph{If $p\leq n/2+1$:}

A straightforward use of Jensen's inequality from \eqref{ineq.Lpo.Ustat.Jensen} provides
\begin{align*}
\esp{ \abs{ \Rh_{p,n} - \esp{\Rh_{p,n}} }^{q}  }
 & \leq \floor{ \frac{n}{m}}^{-1} \sum_{i = 1} ^{\floor{ \frac{n}{m}}}  \esp{ \abs{    \bar h_m\paren{ Z_{(i-1)m+1},\ldots,Z_{im}} }^{q}  } \\
 & = \esp{ \abs{  \Rh_{1,n-p+1} -\E\croch{\Rh_{1,n-p+1} } }^{q}  } . \hfill
\end{align*}

\paragraph{If $p> n/2+1$:}
Let us now use Rosenthal's inequality (Proposition~\ref{prop.rosenthal.inequality}) by introducing symmetric random variables $\zeta_1,\ldots,\zeta_{\floor{n/m}}$ such that
\begin{align*}
\forall 1\leq i\leq \floor{n/m}, \quad \zeta_i =  h_m\paren{ Z_{(i-1)m+1},\ldots,Z_{im}} - h_m\paren{ Z^\prime_{(i-1)m+1},\ldots,Z^\prime_{im}},
\end{align*}
where $Z^\prime_1,\ldots,Z^\prime_n$ are \iid copies of $Z_1,\ldots,Z_n$.
Then it comes for every $\gamma>0$
\begin{align*}
\esp{ \abs{ \sum_{i = 1} ^{\floor{ \frac{n}{m}}}  \bar h_m\paren{ Z_{(i-1)m+1},\ldots,Z_{im}} }^{q}  }  \leq \esp{ \abs{ \sum_{i = 1} ^{\floor{ \frac{n}{m}}}  \zeta_i }^{q}  } ,
\end{align*}
which implies
\begin{align*}
\esp{ \abs{ \sum_{i = 1} ^{\floor{ \frac{n}{m}}}  \bar h_m\paren{ Z_{(i-1)m+1},\ldots,Z_{im}} }^{q}  }
%
& \leq B(q,\gamma) \max\acc{ \gamma \sum_{i = 1} ^{\floor{ \frac{n}{m}}}  \E\croch{ \abs{ \zeta_i }^q } , \paren{ \sqrt{ \sum_{i = 1} ^{\floor{ \frac{n}{m}}} \E\croch{ \zeta_i^2 } } }^q } .
\end{align*}
Then using for every $i$ that
\begin{align*}
\E\croch{\abs{\zeta_i}^q }\leq 2^{q} \E\croch{ \abs{ \bar h_m\paren{ Z_{(i-1)m+1},\ldots,Z_{im}}}^q} ,
\end{align*}
it comes
\begin{align*}
& \esp{ \abs{ \sum_{i = 1} ^{\floor{ \frac{n}{m}}}  \bar h_m\paren{ Z_{(i-1)m+1},\ldots,Z_{im}} }^{q}  } \\
& \leq B(q,\gamma) \max\left(  2^q \gamma \floor{ \frac{n}{m}}  \E\croch{ \abs{ \Rh_{1,m} - \E\croch{ \Rh_{1,m} } }^q } , \paren{ \sqrt{ \floor{ \frac{n}{m}} 2 \Var\paren{  \Rh_{1,m} } } }^q \right) .
\end{align*}
Hence, it results for every $q>2$
\begin{align*}
&\esp{ \abs{ \Rh_{p,n} - \esp{\Rh_{p,n}} }^{q} } \\
 & \leq  B(q,\gamma) \max \left(  2^q \gamma \floor{ \frac{n}{m}}^{-q+1}  \E\croch{ \abs{ \Rh_{1,m} - \E\croch{ \Rh_{1,m} } }^q } , \floor{ \frac{n}{m}}^{-q/2} \paren{ \sqrt{ 2 \Var\paren{  \Rh_{1,m} } } }^q \right) ,
\end{align*}
which concludes the proof.

\subsection{Proof of Theorem~\ref{prop.moment.upper.bounds.Loo}}
\label{Appendix: BorneMomentLpO.kNN}
Our strategy of proof follows several ideas.
The first one consists in using Proposition~\ref{cor.generalized.Efron.Stein} which says that, for every $q\geq 2$,
\begin{align*}
\norm{ \bar h_m(Z_1,\ldots,Z_m) }_q \leq \sqrt{2\kappa q} \sqrt{ \norm{ \sum_{j=1}^m \paren{ h_m(Z_1,\ldots,Z_m)-h_m(Z_1,\ldots,Z_j',\ldots,Z_m) }^2  }_{q/2}  },
\end{align*}
where $h_m(Z_1,\ldots,Z_m) = \Rh_{1,m} $ by Eq.~\eqref{eq.notation.LpO.Ustatistic}, and $ \bar h_m(Z_1,\ldots,Z_m) = h_m(Z_1,\ldots,Z_m) - \E\croch{h_m(Z_1,\ldots,Z_m)}$.
The second idea consists in deriving upper bounds of 
$$\Delta^j h_m = h_m(Z_1,\ldots,Z_m)-h_m(Z_1,\ldots,Z_j',\ldots,Z_m)$$ 
by repeated uses of Stone's lemma, that is Lemma~\ref{Stone} which upper bounds by $k\gamma_d$ the maximum number of $X_i$s that can have a given $X_j$ among their $k$ nearest neighbors.
Finally, for technical reasons we have to distinguish the case $q=2$ where we get tighter bounds, and $q>2$.

\subsubsection{Upper bounding $\Delta^j h_m$}
For the sake of readability let us now use the notation $\D^{(i)}= \D_m^{(i)}$ (see Theorem~\ref{theorem.lpo.ustat}), and let $\D_j^{(i)}$ denote the set$\paren{Z_1,\ldots,Z_j^\prime,\ldots,Z_n}$ where the $i$-th coordinate has been removed.
Then, $\Delta^j h_m = h_m(Z_1,\ldots,Z_m)-h_m(Z_1,\ldots,Z_j',\ldots,Z_m)$ is now upper bounded by
\begin{align}
\abs{ \Delta^j h_m }
& \leq \frac{1}{m} + \frac{1}{m} \sum_{i\neq j} \abs{ \ind{ \A_k^{\D^{(i)}}\paren{ X_i }\neq Y_i  } - \ind{  \A_k^{\D_j^{(i)}}\paren{ X_i } \neq Y_i }  } \nonumber\\
& \leq \frac{1}{m} + \frac{1}{m} \sum_{i\neq j} \abs{ \ind{ \A_k^{\D^{(i)}}\paren{X_i } \neq \A_k^{\D_j^{(i)}}\paren{X_i} }  } . 
\end{align}

Furthermore, let us introduce for every $1\leq j\leq n$,
\begin{align*}
  A_j = \acc{ 1\leq i\leq m,\ i\neq j,\ j \in V_k(X_i)} \text{ and } A_j' = \acc{ 1\leq i\leq m,\ i\neq j,\ j \in V_k'(X_i)}
\end{align*}
where $V_k(X_i)$ and $V_k'(X_i)$ denote the indices of the $k$ nearest neighbors of $X_i$ respectively among $X_1,\ldots,X_{j-1},X_{j},X_{j+1},\ldots,X_m$ and $X_1,...,X_{j-1},X_{j}',X_{j+1},\ldots,X_m$.
Setting $  B_{j} = A_j\cup A'_j $, one obtains
\begin{align} \label{maj.diff.hj_hjp}
\abs{  \Delta^j h_m }
& \leq \frac{1}{m} + \frac{1}{m} \sum_{i \in B_j} \abs{ \ind{ \A_k^{\D^{(i)}}\paren{X_i } \neq \A_k^{\D_j^{(i)}}\paren{X_i} }  } .
\end{align}

From now on, we distinguish between $q=2$ and  $q>2$ because we will be able to derive a tighter bound for $q=2$ than for $q>2$.

\subsubsection{Case $q>2$}
From \eqref{maj.diff.hj_hjp}, Stone's lemma (Lemma~\ref{Stone}) provides
\begin{align*}
\abs{  \Delta^j h_m }
& \leq \frac{1}{m} + \frac{1}{m} \sum_{i\in B_j} \ind{ \A_k^{\D^{(i)}}\paren{X_i } \neq \A_k^{\D_j^{(i)}}\paren{X_i} }  
 \leq \frac{1}{m} + \frac{2k\gamma_d}{m} \enspace\cdot
\end{align*}

Summing over $1\leq j \leq n$ and applying $(a+b)^q \leq 2^{q-1} \paren{a^q + b^q}$ ($a,b\geq 0$ and $q\geq 1$), it comes
\begin{align*}
\sum_j\paren{  \Delta^j h_m }^2 \leq  \frac{2}{m}\gp{ 1 + (2k\gamma_d)^2 }\leq  &  \frac{4}{m}(2k\gamma_d)^2 \enspace,
\end{align*}
hence
\begin{align*}
\norm{ \sum_{j=1}^m \paren{ h_m(Z_1,\ldots,Z_m) - h_m(Z_1,\ldots,Z_j',\ldots,Z_m) }^2 }_{q/2} \leq  \frac{4}{m}(2k\gamma_d)^2  .
\end{align*}
This leads for every $q > 2$ to
\begin{align*}
  \norm{ \bar h_m(Z_1,\ldots,Z_m) }_q 
& \leq q^{1/2} \sqrt{2\kappa}  \frac{4k\gamma_d}{\sqrt{m}} \enspace ,
\end{align*}
which enables to conclude.

\subsubsection{Case $q=2$}
It is possible to obtain a slightly better upper bound in the case $q=2$ with the following reasoning.
With the same notation as above and from \eqref{maj.diff.hj_hjp}, one has
\begin{align*}
\esp{ \gp{    \Delta^j h_m }^2 }
& = \frac{2}{m^2} + \frac{2}{m^2} \esp{ \gp{\sum_{i\in B_j}  \ind{ \A_k^{\D^{(i)}}\paren{X_i } \neq \A_k^{\D_j^{(i)}}\paren{X_i} }  }^2} \qquad \mbox{(using $\1_{\acc{\cdot}} \leq 1$)}\\
& \leq \frac{2}{m^2} + \frac{2}{m^2} \esp{ |B_j| \sum_{i\in B_j}  \ind{ \A_k^{\D^{(i)}}\paren{X_i } \neq \A_k^{\D_j^{(i)}}\paren{X_i} }  }.
\end{align*}
Lemma~\ref{Stone} implies $\abs{B_j} \leq 2 k \gamma_d$, which allows to conclude
\begin{align*}
\esp{ \gp{  \Delta^j h_m }^2 }
& \leq \frac{2}{m^2} + \frac{4k\gamma_d}{m^2} \esp{ \sum_{i\in B_j} \ind{ \A_k^{\D^{(i)}}\paren{X_i } \neq \A_k^{\D_j^{(i)}}\paren{X_i} }  }.
\end{align*}
Summing over $j$ and introducing an independent copy of $Z_1$ denoted by $Z_0$, one derives
\begin{align}
&\sum_{j=1}^m\esp{ \gp{   h_m(Z_1,\ldots,Z_m)-h_m(Z_1,\ldots,Z_j',\ldots,Z_m) }^2 } \nonumber \\
%
%
& \leq \frac{2}{m} + \frac{4k\gamma_d}{m} \sum_{i=1}^m\esp{ \ind{ \A_k^{\D^{(i)}}\paren{X_i } \neq  \A_k^{\D^{(i)}\cup Z_0}\paren{ X_i} } +\ind{ \A_k^{\D^{(i)}\cup Z_0}\paren{X_i} \neq  \A_k^{\D_j^{(i)}}\paren{X_i } } }  \nonumber \\
& \leq \frac{2}{m} + 4k\gamma_d \times 2 \frac{4\sqrt{k}}{\sqrt{2\pi}m} = \frac{2}{m} + \frac{32\gamma_d}{\sqrt{2\pi}}\frac{k\sqrt{k}}{m} \leq (2+16\gamma_d) \frac{k\sqrt{k}}{m} \enspace , \label{ineq.technical.proof.improved}
\end{align}
where the last but one inequality results from Lemma~\ref{Lemma : HOresult}.

\subsection{Proof of Theorem~\ref{prop.moment.upper.bounds.Lpo}}
\label{sec.proof.upper.bound.moments.Lpo}

The idea is to plug the upper bounds previously derived for the L1O estimator, namely Ineq.~\eqref{ineq.Lpo.Loo.moments.direct.quad} and~\eqref{ineq.Lpo.Loo.moments.quad} from Theorem~\ref{Prop : upper.bound.lpo.loo}, in the inequalities proved for the moments of the L$p$O estimator in Theorem~\ref{Prop : upper.bound.lpo.loo}.

\bigskip

\paragraph{Proof of  Ineq.~\eqref{ineq.variance.generalized.efron.stein}, \eqref{ineq.p.small.generalized.efron.stein}, and~\eqref{ineq.p.large.variance.generalized.efron.stein}:}
These inequalities straightforwardly result from the combination of Theorem~\ref{Prop : upper.bound.lpo.loo} and Ineq.~\eqref{ineq.Lpo.Loo.moments.direct.quad} and~\eqref{ineq.Lpo.Loo.moments.quad} from Theorem~\ref{prop.moment.upper.bounds.Loo}.

\bigskip

\paragraph{Proof of Ineq.~\eqref{ineq.p.large.generalized.efron.stein}:}
It results from the upper bounds proved in Theorem~\ref{prop.moment.upper.bounds.Loo} and plugged in Ineq.~\eqref{ineq.Lpo.Loo.moments.larger.quad} (derived from Rosenthal's inequality with optimized constant $\gamma$, namely Proposition~\ref{prop.rosenthal.inequality.upper.bound}).

Then it comes
{\small
\begin{align*}
&\E\croch{ \abs{ \Rh_{p,n} - \E\croch{\Rh_{p,n}} }^{q} }
 \leq  \paren{2\sqrt{2e}}^q \times \\
 & \max\acc{ \paren{\sqrt{q}}^q \paren{ \sqrt{ \floor{\frac{n}{n-p+1}}^{-1} 2C_1  \sqrt{k} \paren{\frac{ \sqrt{k}}{\sqrt{n-p+1}}}^2 }}^q ,  q^q \floor{\frac{n}{n-p+1}}^{-q+1} (2C_2\sqrt{q})^{q} \paren{ \frac{k}{\sqrt{n-p+1}}}^q  } \\
& =  \paren{2\sqrt{2e}}^q \times \\
& \max \acc{ \paren{\sqrt{q}}^q \paren{ \sqrt{  2C_1  \sqrt{k} } \sqrt{ \frac{k}{\paren{n-p+1}\floor{\frac{n}{n-p+1}} } } }^q ,  \paren{q^{3/2} }^q \floor{\frac{n}{n-p+1}}  \paren{ 2C_2\frac{k}{\floor{\frac{n}{n-p+1}}\sqrt{n-p+1}} }^q  } \\
& \leq \floor{\frac{n}{n-p+1}} \max\acc{ \paren{ \lambda_1 q^{1/2} }^q , \paren{ \lambda_2 q^{3/2} }^q } ,
\end{align*}}
with
\begin{align*}
  \lambda_1 & =  2\sqrt{2e}   \sqrt{  2C_1  \sqrt{k} } \sqrt{ \frac{k}{\paren{n-p+1}\floor{\frac{n}{n-p+1}} } } , \quad
\lambda_2  =   2\sqrt{2e } 2C_2\frac{k}{\floor{\frac{n}{n-p+1}}\sqrt{n-p+1}} \enspace\cdot
\end{align*}
Finally introducing $\Gamma = 2\sqrt{2e } \max\paren{2C_2,\sqrt{2C1}}$ provides the result.

\newpage

\section{Proofs of exponential concentration inequalities}

\subsection{Proof of Proposition~\ref{prop.concentration.Lpo.DGL.straightforward}}
\label{subsec.bounded.differences.proof}
The proof relies on two successive ingredients: McDiarmid's inequality (Theorem~\ref{thm.bounded.differences.ineq}), and Stone's lemma (Lemma~\ref{Stone}).

First with $\Dn=\D$ and $\D_j = (Z_1,\ldots,Z_{j-1},Z_j',Z_{j+1},\ldots,Z_n)$, let us start by upper bounding $ \abs{  \Rhp\paren{\Dn} -  \Rhp\paren{\D_j}  } $ for every $1\leq j\leq n$.

Using Eq.~\eqref{def.Lpo.estimator}, one has
\begin{align*}
& \abs{  \Rhp\paren{\D} - \Rhp\paren{\D_j}   } \\
 &\leq \frac{1}{p}\sum_{i=1}^n\gp{^n_p}^{-1} \sum_e \abs{\ind{ \A_k^{\D^e}\paren{X_i} \neq Y_i} - \ind{ \A_k^{\D_j^e}\paren{X_i}\neq Y_i} } \ind{i\not\in e}\\
 &\leq  \frac{1}{p}\sum_{i=1}^n\gp{^n_p}^{-1} \sum_e \ind{ \A_k^{\D^e}\paren{X_i} \neq \A_k^{\D_j^e}\paren{X_i} }\ind{i\not\in e} \\
 &\leq \frac{1}{p}\sum_{i\neq j}^n\gp{^n_p}^{-1} \sum_e \gc{\ind{j \in V_k^{\D^e}(X_i)} + \ind{j \in V_k^{\D_j^e}(X_i)}}\ind{i\not\in e} + \frac{1}{p}\gp{^n_p}^{-1} \sum_e \ind{j\not\in e} ,
\end{align*}
where $\D_j^e$ denotes the set of random variables among $\D_j$ having indices in $e$, and $V_k^{\D^e}(X_i)$ (resp. $V_k^{\D_j^e}(X_i)$) denotes the set of indices of the $k$ nearest neighbors of $X_i$ among $\D^e$ (resp. $\D_j^e$).

Second, let us now introduce
\begin{align*}
B_j^{\Enp} = \underset{e\in \Enp}{\cup} \acc{ 1\leq i\leq n,\ i\not\in e\cup\acc{j},\  V_k^{\D_j^e}(X_i)\ni j\ \mbox{or}\ V_k^{\D^e}(X_i)\ni j}.
\end{align*}
Then Lemma~\ref{Stone} implies $\card(B_j^{\Enp}) \leq 2(k+p-1)\gamma_d$, hence
\begin{align*}
\abs{  \Rhp\paren{\Dn} - \Rhp\paren{\D_j}   }  &\leq \frac{1}{p}\sum_{i\in B_j^{\Enp} }\gp{^n_p}^{-1} \sum_e 2\cdot \ind{i\not\in e} + \frac{1}{n}  \leq  \frac{4(k+p-1)\gamma_d}{n} + \frac{1}{n} \enspace\cdot
\end{align*}
The conclusion results from McDiarmid's inequality (Section~\ref{subsec.mc.diarmid}).
%

\subsection{Proof of Theorem~\ref{Prop : ConcentrationLpO}}
\label{Appendix:ConcentrationLpO}
In this proof, we use the same notation as in that of Proposition~\ref{prop.concentration.Lpo.DGL.straightforward}.

The goal of the proof is to provide a refined version of previous Proposition~\ref{prop.concentration.Lpo.DGL.straightforward} by taking into account the 
status of each $X_j$ as one of the $k$ nearest neighbors of a given $X_i$ (or not).

To do so, our strategy is to prove a sub-Gaussian concentration inequality by use of Lemma~\ref{lem.subgaussian.moment.to.exp}, which requires the control of the even moments of the L$p$O estimator $\Rhp$.

Such upper bounds are derived 
\begin{itemize}
	\item First, by using Ineq.~\eqref{ineg.moment.removed.obs} (generalized Efron-Stein inequality), which amounts to control the $q$-th moments of the differences
$$\Rh_p( \D )- \Rhp\paren{\D_j} .$$

	\item Second, by precisely evaluating the contribution of each neighbor $X_i$ of a given $X_j$, that is by computing quantities such as $\P_e\croch{j\in e,\ i\in \bar e,\ j \in V_k^{\D^e}(X_i)  }$, where $\P_e\croch{\cdot}$ denotes the probability measure with respect to the uniform random variable $e$ over $\Enp$, and $V_k^{\D^e}(X_i)$ denotes the indices of the $k$ nearest neighbors of $X_i$ among $X^e = \acc{X_{\ell}, \ell\in e}$.

\end{itemize}

\subsubsection{Upper bounding $\Rh_p( \D )- \Rhp\paren{\D_j}$}

For every $1\leq j \leq n$, one gets 
\begin{align*}
\Rh_p( \D )- \Rhp\paren{\D_j} &= {n\choose p}^{-1}\sum_e \left\{ \ind{j\in \bar e} \frac{1}{p}  \gp{\ind{\A_k^{\D^e}(X_j)\neq Y_j}-\ind{\A_k^{\D^e}(X_j^{\prime})\neq Y_j^{\prime}}}\right.\\
& \hspace{2.5cm}+ \left. \ind{j\in e} \frac{1}{p} \sum_{i\in \bar e} \gp{\ind{\A_k^{\D^e}(X_i)\neq Y_i}-\ind{\A_k^{\D_j^e}(X_i)\neq Y_i}} \right\} .
\end{align*}
Absolute values and Jensen's inequality then provide
\begin{align*}
\abs{  \Rh_p( \D )- \Rhp\paren{\D_j} } 
& \leq  {n\choose p}^{-1}\sum_e \acc{ \ind{j\in \bar e} \frac{1}{p} + \ind{j\in e} \frac{1}{p} \sum_{i\in \bar e} \ind{\A_k^{\D^e}(X_i)\neq \A_k^{\D_j^e}(X_i)} } \\
& \leq  \frac{1}{n}+ {n\choose p}^{-1}\sum_e  \ind{j\in e} \frac{1}{p} \sum_{i\in \bar e} \ind{\A_k^{\D^e}(X_i)\neq \A_k^{\D_j^e}(X_i)}  \\
& =  \frac{1}{n}+ \frac{1}{p} \sum_{i=1}^n \P_e\croch{j\in e,\ i\in \bar e,\ \A_k^{\D^e}(X_i)\neq \A_k^{\D_j^e}(X_i)} .
\end{align*}
where the notation $\P_e$ means the integration is carried out with respect to the random variable $e \in \Enp$, which follows a discrete uniform distribution over the set $\Enp$ of all $n-p$ distinct indices among $\acc{1,\ldots,n}$.

Let us further notice that $\acc{ \A_k^{\D^e}(X_i)\neq \A_k^{\D_j^e}(X_i)} \subset \acc{j \in V_k^{\D^e}(X_i)\cup V_k^{\D_j^e}(X_i) } $, where $V_k^{\D_j^e}(X_i)$ denotes the set of indices of the $k$ nearest neighbors of $X_i$ among $\D_j^e$ with the notation of the proof of Proposition~\ref{prop.concentration.Lpo.DGL.straightforward}.
Then it results
\begin{align*}
& \sum_{i=1}^n \P_e\croch{j\in e,\ i\in \bar e,\ \A_k^{\D^e}(X_i)\neq \A_k^{\D_j^e}(X_i)} \\
& \leq \sum_{i=1}^n \P_e\croch{j\in e,\ i\in \bar e,\ j \in V_k^{\D^e}(X_i)\cup V_k^{\D_j^e}(X_i) } \\
& \leq \sum_{i=1}^n \paren{ \P_e\croch{j\in e,\ i\in \bar e,\ j \in  V_k^{\D^e}(X_i) } + \P_e\croch{j\in e,\ i\in \bar e,\ j \in V_k^{\D^e}(X_i)\cup V_k^{\D_j^e}(X_i) } } \\
& \leq 2 \sum_{i=1}^n  \P_e\croch{j\in e,\ i\in \bar e,\ j \in  V_k^{\D^e}(X_i) } ,
\end{align*}
which leads to
\begin{align*}
\abs{  \Rh_p( \D )- \Rhp\paren{\D_j} } 
 \leq  \frac{1}{n}+ \frac{2}{p} \sum_{i=1}^n  \P_e\croch{j\in e,\ i\in \bar e,\ j \in  V_k^{\D^e}(X_i) }  .
\end{align*}

Summing over $1\leq j\leq n$ the square of the above quantity, it results
\begin{align*}
 \sum_{j=1}^{n}\gp{  \Rh_p( \D )- \Rhp\paren{\D_j}  }^2 
&\leq  \sum_{j=1}^{n}\ga{ \frac{1}{n}+\frac{2}{p} \sum_{i=1}^n \P_e\croch{j\in e,\ i\in \bar e,\ j \in V_k^{\D^e}(X_i) } }^2 \\
&\leq 2 \sum_{j=1}^{n} \frac{1}{n^2} + 2\ga{ \frac{2}{p} \sum_{i=1}^n \P_e\croch{j\in e,\ i\in \bar e,\ j \in V_k^{\D^e}(X_i) } }^2\nonumber\\
&\leq \frac{2}{n} + 8 \sum_{j=1}^{n}\ga{ \frac{1}{p} \sum_{i=1}^n \P_e\croch{j\in e,\ i\in \bar e,\ j \in V_k^{\D^e}(X_i) } }^2 \nonumber .
\end{align*}

\subsubsection{Evaluating the influence of each neighbor}

Further using that
\begin{align*}
& \sum_{j=1}^n \paren{ \frac{1}{p} \sum_{i=1}^n \P_e\croch{j\in e,\ i\in \bar e,\ j \in V_k^{\D^e}(X_i) } }^2 \\
& = \sum_{j=1}^n \frac{1}{p^2}\sum_{i=1}^n \paren{ \P_e\croch{j\in e,\ i\in \bar e,\ j \in V_k^{\D^e}(X_i) } }^2 + \\
& \qquad \sum_{j=1}^n \frac{1}{p^2} \sum_{1\leq i\neq \ell \leq n} \P_e\croch{j\in e,\ i\in \bar e,\ j \in V_k^{\D^e}(X_i) }\P_e\croch{j\in e,\ i\in \bar e,\ j \in V_k^{\D^e}(X_{\ell}) } \\
& = \quad T1 \qquad + \qquad T2\enspace ,
\end{align*}
let us now successively deal with each of these two terms.

\paragraph{Upper bound on $T1$}

First, we start by partitioning the sum over $j$ depending on the rank of $X_j$ as a neighbor of $X_i$ in the whole sample $(X_1,\ldots,X_n)$.
It comes {\small
\begin{align*}
 & =   \sum_{j=1}^{n}  \sum_{i=1}^n \ga{\P_e\croch{j\in e,\ i\in \bar e,\ j \in V_k^{\D^e}(X_i)  } }^2 \\
  & =\sum_{i=1}^n \paren{  \sum_{j \in V_k(X_i)}  \ga{\P_e\croch{j\in e,\ i\in \bar e,\ j \in V_k^{\D^e}(X_i)  } }^2 + \sum_{j \in V_{k+p}(X_i)\setminus V_{k}(X_i)}  \ga{\P_e\croch{j\in e,\ i\in \bar e,\ j \in V_k^{\D^e}(X_i)  } }^2 }.
\end{align*}}
Then Lemma~\ref{BasicResultForResamplingKnn} leads to
\begin{align*}
& \sum_{j \in V_k(X_i)}  \ga{\P_e\croch{j\in e,\ i\in \bar e,\ j \in V_k^{\D^e}(X_i)  } }^2 + \sum_{j \in V_{k+p}(X_i)\setminus V_{k}(X_i)}  \ga{\P_e\croch{j\in e,\ i\in \bar e,\ j \in V_k^{\D^e}(X_i)  } }^2  \\
\leq & \sum_{j \in V_k(X_i)}  \paren{ \frac{p}{n} \frac{n-p}{n-1} }^2  +
\sum_{ j \in V_{k+p}(X_i)\setminus V_{k}(X_i) }  \P_e\croch{j\in e,\ i\in \bar e,\ j \in V_k^{\D^e}(X_i)  } \frac{p}{n} \frac{n-p}{n-1} \\
= &\ k  \paren{ \frac{p}{n} \frac{n-p}{n-1} }^2  + \frac{kp}{n} \frac{p-1}{n-1} \frac{p}{n} \frac{n-p}{n-1} =  k  \paren{ \frac{p}{n}  }^2 \frac{n-p}{n-1} \enspace,
\end{align*}
where the upper bound results from $\sum_j a_j^2 \leq \paren{\max_j a_j} \sum_j a_j$, for $a_j\geq 0$.
It results
\begin{align*}
 T1 = \frac{1}{p^2}  \sum_{j=1}^{n}  \sum_{i=1}^n \ga{\P_e\croch{j\in e,\ i\in \bar e,\ j \in V_k^{\D^e}(X_i)  } }^2 &  \leq  \frac{1}{p^2} n \croch{k  \paren{ \frac{p}{n}  }^2 \frac{n-p}{n-1} }  =  \frac{k}{n}   \frac{n-p}{n-1} \enspace\cdot
\end{align*}

\paragraph{Upper bound on $T2$}

Let us now apply the same idea to the second sum, partitioning the sum over $j$ depending on the rank of $j$ as a neighbor of $\ell$ in the whole sample.
Then,
\begin{align*}
T2& =  \frac{1}{p^2}  \sum_{j=1}^{n}   \sum_{1\leq i\neq \ell \leq n}  \P_e\croch{j\in e,\ i\in \bar e,\ j \in V_k^{\D^e}(X_i)  }\P_e\croch{j\in e,\ \ell\in \bar e,\ j \in V_k^{\D^e}(X_{\ell}) } \\
& \leq   \frac{1}{p^2}  \sum_{i=1}^n  \sum_{\ell \neq i}   \sum_{j \in V_k(X_{\ell}) } \P_e\croch{j\in e,\ i\in \bar e,\ j \in V_k^{\D^e}(X_i)  }  \frac{p}{n}\frac{n-p}{n-1} \\
& \qquad +  \frac{1}{p^2}  \sum_{i=1}^n  \sum_{\ell \neq i}   \sum_{j \in V_{k+p}(X_{\ell})\setminus V_{k}(X_{\ell} } \P_e\croch{j\in e,\ i\in \bar e,\ j \in V_k^{\D^e}(X_i)  }   \frac{kp}{n}\frac{p-1}{n-1} \enspace\cdot
\end{align*}
We then apply Stone's lemma (Lemma~\ref{Stone}) to get
{\small 
\begin{align*}
& T2 \\
& = \frac{1}{p^2}  \sum_{i=1}^n  \sum_{j=1}^n \P_e\croch{j\in e,\ i\in \bar e,\ j \in V_k^{\D^e}(X_i)  } \paren{ \sum_{\ell \neq i}   \1_{j \in V_k(X_{\ell}) }   \frac{p}{n}\frac{n-p}{n-1} + \sum_{\ell \neq i}   \1_{j \in V_{k+p}(X_{\ell})\setminus V_{k}(X_{\ell} }  \frac{kp}{n}\frac{p-1}{n-1} } \\
& \leq \frac{1}{p^2}  \sum_{i=1}^n  \frac{kp}{n} \paren{ k\gamma_d   \frac{p}{n}\frac{n-p}{n-1} + (k+p)\gamma_d \frac{kp}{n}\frac{p-1}{n-1} } =   \gamma_d \frac{k^2}{n} \paren{ \frac{n-p}{n-1} + (k+p)\frac{p-1}{n-1} }  \\
& =  \gamma_d \frac{k^2}{n} \paren{ 1 + (k+p-1)\frac{p-1}{n-1} } .
\end{align*}}

\paragraph{Gathering the upper bounds}

The two previous bounds provide
\begin{align*}
\sum_{j=1}^{n}\ga{ \frac{1}{p} \sum_{i=1}^n \P_e\croch{j\in e,\ i\in \bar e,\ j \in V_k^{\D^e}(X_i)  } }^2 & = T1 + T2 \\
&\leq \frac{k}{n}   \frac{n-p}{n-1} + \gamma_d \frac{k^2}{n} \paren{ 1 + (k+p-1)\frac{p-1}{n-1} },
\end{align*}
which enables to conclude
\begin{align*}
& \sum_{j=1}^{n}\gp{  \Rh_p( \D )- \Rhp\paren{\D_j} }^2 \\
& \leq \frac{2}{n}\paren{1 + 4k + 4k^2\gamma_d \croch{1 + (k+p)\frac{p-1}{n-1}}} \leq \frac{8k^2 (1+\gamma_d )}{n}\croch{1 + (k+p)\frac{p-1}{n-1}} .
\end{align*}

\subsubsection{Generalized Efron-Stein inequality}
Then \eqref{ineg.moment.removed.obs} provides for every $q\geq 1$
\begin{align*}
  \norm{ \Rh_{p,n} - \E\croch{\Rh_{p,n}} }_{2q} \leq 4 \sqrt{\kappa q } \sqrt{ \frac{8(1+\gamma_d )k^2}{n} \croch{1 + (k+p)\frac{p-1}{n-1}} }.
\end{align*}
Hence combined with $q! \geq q^q e^{-q} \sqrt{2\pi q}$, it comes
\begin{align*}
  \E\croch{ \paren{ \Rh_{p,n} - \E\croch{\Rh_{p,n}} }^{2q} } & \leq \paren{ 16 \kappa q }^q \paren{ \frac{8(1+\gamma_d )k^2 }{n} \croch{1 + (k+p)\frac{p-1}{n-1}} }^q \\
& \leq q! \paren{ 16 e\kappa  \frac{8(1+\gamma_d )k^2}{n} \croch{1 + (k+p)\frac{p-1}{n-1}} }^q .
\end{align*}
The conclusion follows from Lemma~\ref{lem.subgaussian.moment.to.exp} with $C = 16 e\kappa  \frac{8(1+\gamma_d )k^2 }{n} \croch{1 + (k+p)\frac{p-1}{n-1}}$.
Then for every $t>0$,
\begin{align*}
 \prob{  \Rh_{p,n} - \E\paren{ \Rh_{p,n} }  > t  } \vee \prob{ \E\paren{ \Rh_{p,n} } - \Rh_{p,n} > t  } 
& \leq  \exp \paren{ -  \frac{  nt^2 }{ 1024 e\kappa  k^2(1+\gamma_d)  \croch{1 + (k+p)\frac{p-1}{n-1}} } } \cdot
\end{align*}


\subsection{Proof of Theorem~\ref{Prop : ConcentrationLpO:bis} and Proposition~\ref{prop.exponential.inequality.p.large.explicit.deviations}}
\label{Appendix:ConcentrationLpObis}

%
%

\subsubsection{Proof of Theorem~\ref{Prop : ConcentrationLpO:bis}}

\paragraph{If $p< n/2+1$:}
~\\
In what follows, we exploit a characterization of sub-Gaussian random variables by their $2q$-th moments (Lemma~\ref{lem.subgaussian.moment.to.exp}).

\medskip

\sloppy
From \eqref{ineq.variance.generalized.efron.stein} and \eqref{ineq.p.small.generalized.efron.stein} applied with $2q$, and further introducing a constant $\Delta = 4 \sqrt{e}\max\paren{ \sqrt{C_1/2} , C_2 }>0$, it comes for every $q\geq 1$
\begin{align}\label{ineq.moments.2q}
\E\croch{ \abs{ \Rh_{p,n} - \E\croch{\Rh_{p,n}} }^{2q} } \leq \paren{ \frac{\Delta^2}{16 e}  \frac{k^2}{n-p+1} }^q  (2q)^{q} \leq \paren{  \frac{\Delta^2}{8}  \frac{k^2}{n-p+1} }^q   q! \enspace,
\end{align}
with $q^q \leq q! e^q/\sqrt{2\pi q} $.
Then Lemma~\ref{lem.subgaussian.moment.to.exp} provides for every $t>0$
\begin{align*}
 \prob{ \Rh_{p,n} - \E\croch{\Rh_{p,n}} >t }  \vee  \prob{ \E\croch{\Rh_{p,n}} - \Rh_{p,n} >t } 
&\leq \exp\paren{- (n-p+1) \frac{t^2}{ \Delta^2  k^2 } } .
\end{align*}

\paragraph{If $p\geq n/2+1$:}
~\\
This part of the proof relies on Proposition~\ref{prop.moment.exponential} which provides an exponential concentration inequality from upper bounds on the moments of a random variable.

\medskip

Let us now use \eqref{ineq.variance.generalized.efron.stein} and \eqref{ineq.p.large.generalized.efron.stein} combined with \eqref{ineq.moment.exponential.deviation}, where $C=\floor{\frac{n}{n-p+1}}$, $q_0=2$, and $\min_j \alpha_j = 1/2$.
This provides for every $t>0$
\begin{align*}
 & \P\croch{ \abs{\Rh_{p,n} - \E\croch{\Rh_{p,n}}} > t } \leq  \floor{\frac{n}{n-p+1}} e \times \\
 &\exp\gc{- \frac{1}{2e} \min\acc{  \paren{n-p+1}\floor{\frac{n}{n-p+1}}\frac{t^2 }{4\Gamma^2 k \sqrt{k}}  , \  \gp{ (n-p+1) \floor{\frac{n}{n-p+1}}^2\frac{ t^2}{4\Gamma^2k^2} }^{1/3} }  }  , \hfill
\end{align*}
where $\Gamma$ arises from Eq.~\eqref{ineq.p.large.generalized.efron.stein}.

\subsubsection{Proof of Proposition~\ref{prop.exponential.inequality.p.large.explicit.deviations}}

As in the previous proof, the derivation of the deviation terms results from Proposition~\ref{prop.moment.exponential}.

\medskip

With the same notation and reasoning as in the previous proof, let us combine  \eqref{ineq.variance.generalized.efron.stein} and \eqref{ineq.p.large.generalized.efron.stein}.
From \eqref{ineq.moment.exponential.error} of  Proposition~\ref{prop.moment.exponential} where $C=\floor{\frac{n}{n-p+1}}$, $q_0=2$, and $\min_j \alpha_j = 1/2$, it results for every $t>0$
\begin{align*}
 \P\croch{ \abs{\Rh_{p,n} - \E\croch{\Rh_{p,n}}} > \Gamma \sqrt{\frac{2e }{(n-p+1)}} \paren{ \sqrt{\frac{k^{3/2}}{\floor{\frac{n}{n-p+1} } } t }  +   2e \frac{k}{ \floor{\frac{n}{n-p+1} } } t^{3/2} } } \leq \floor{\frac{n}{n-p+1}} e \cdot e^{-t} , \hfill
\end{align*}
where $\Gamma>0$ is given by Eq.~\eqref{ineq.p.large.generalized.efron.stein}.


\newpage

\section{Proofs of deviation upper bounds}

\subsection{Proof of Ineq.~\eqref{ineq.upper.bound.squared.deviation.moment}  in Theorem~\ref{Prop : Consistency}}\label{appendix.proof.consistency}

The proof follows the same strategy as that of Theorem~2.1 in \cite{RogersWagner78}.

Along the proof, we will repeatedly use some notation that we briefly introduce here.
First, let us define $Z_0=(X_0,Y_0)$ and $Z_{n+1}=(X_{n+1},Y_{n+1})$ that are independent copies of $Z_1$.
Second to ease the reading of the proof, we also use several shortcuts: $\f{X_0} = \A_k^{\Dn}\paren{X_0}$, and $\fe{X_0} = \A_k^{\D^e}\paren{X_0}$ for every set of indices $e\in\Enp$ (with cardinality $n-p$).

Finally along the proof, $e,e^\prime \in \Enp$ denote two \emph{random variables} which are sets of distinct indices \emph{with discrete uniform distribution over $\Enp$}.
The notation $\P_e$ (resp. $\P_{e,e^\prime}$) means the integration is made with respect to the sample $\D$ and also the random variable $e$ (resp. $\D$ and also the random variables $e,e^\prime$). $\E_e\croch{\cdot}$ and $\E_{e,e^\prime}\croch{\cdot}$ are teh corresponding expectations.
Note that the sample $\D$ and the random variables $e,e^\prime$ are independent from each other, so that computing for instance $\P_e\paren{ i \not\in e}$ amounts to integrating with respect to the random variable $e$ only.

\subsubsection{Main part of the proof}\label{subsubsec.main.proof.combinatorial}

With the notation $L_n = L(\A_k^{\Dn})$, let us start from
\begin{align*}
& \esp{(\Rh_{p,n}-L_n)^2} = \esp{\Rh_p^2(\A_k^{\Dn})} + \esp{L_n^2} - 2\esp{\Rh_{p,n} L_n},
\end{align*}
let us notice that
\begin{align*}
  \esp{L_n^2} =  \P\paren{\f{X_0}\neq Y_0,\f{X_{n+1}}\neq Y_{n+1}} ,
\end{align*}
and
\begin{align*}
\esp{\Rh_{p,n} L_n} =  \P_e\paren{\f{X_0}\neq Y_0,\fe{X_{i}}\neq Y_{i}| \ i \notin e}  \P_e\paren{ i \not\in e} .
\end{align*}
It immediately comes
\begin{align}
& \esp{(\Rh_{p,n}-L_n)^2} \nonumber\\
&=   \esp{\Rh_p^2(\A_k^{\Dn})} -  \P_e\paren{\f{X_0}\neq Y_0,\fe{X_{i}}\neq Y_{i}\mid \ i \notin e}  \P_e\paren{ i \not\in e} \label{Equ : RiskQuadraticPartI}\\
& \ + \croch{ \P\paren{\f{X_0}\neq Y_0,\f{X_{n+1}}\neq Y_{n+1}} -  \P_e\paren{\f{X_0}\neq Y_0,\fe{X_{i}}\neq Y_{i}| \ i \notin e}  \P_e\paren{ i \notin e} } \label{Equ : RiskQuadraticPartII}.
\end{align}
The proof then consists in successively upper bounding the two terms~\eqref{Equ : RiskQuadraticPartI} and~\eqref{Equ : RiskQuadraticPartII} of the last equality.

\medskip

\paragraph{Upper bound of \eqref{Equ : RiskQuadraticPartI}}

First, we have
\begin{eqnarray*}
p^2 \esp{\Rh_p^2(\A_k^{\Dn})} &=& \sum_{i,j} \E_{e,e^\prime} \croch{\ind{\fe{X_i}\neq Y_i}\ind{i \notin e}\ind{\fep{X_j}\neq Y_j}\ind{j \notin e'}} \\
&=& \sum_{i}\E_{e,e^\prime} \croch{\ind{\fe{X_i}\neq Y_i}\ind{i \notin e}\ind{\fep{X_i}\neq Y_i}\ind{i \notin e'}}\\
& & +\sum_{i\neq j}\E_{e,e^\prime} \croch{\ind{\fe{X_i}\neq Y_i}\ind{i \notin e}\ind{\fep{X_j}\neq Y_j}\ind{j \notin e'}} .
\end{eqnarray*}

Let us now introduce the five following events where we emphasize $e$ and $e^\prime$ are random variables with the discrete uniform distribution over $\Enp$:
\begin{eqnarray*}
S^{0}_{i}&=&\{ i \notin e,\ i \notin e' \},\\
S^{1}_{i,j} &=& \{ i \notin e,\ j \notin e',\ i \notin e',\ j \notin e \},\qquad
S^{2}_{i,j} = \{ i \notin e,\ j \notin e',\ i \notin e',\ j \in e \},\\
S^{3}_{i,j} &=& \{i \notin e,\ j \notin e',\ i \in e',\ j \notin e \},\qquad
S^{4}_{i,j} = \{ i \notin e,\ j \notin e',\ i \in e',\ j \in e \} .
\end{eqnarray*}
Then,
\begin{align*}
p^2\esp{\Rh_p^2(\A_k^{\Dn})} 
&= \sum_{i}\P_{e,e^\prime} \paren{\fe{X_i}\neq Y_i,\ \fep{X_i}\neq Y_i | S^{0}_{i}} \probi{e,e'}{S^{0}_{i}}\\
&+\sum_{i\neq j}\sum_{\ell=1}^{4} \P_{e,e^\prime} \paren{\fe{X_i}\neq Y_i,\ \fep{X_i}\neq Y_i | S^{\ell}_{i,j}} \probi{e,e'}{S^{\ell}_{i,j}}\\
&= n\P_{e,e^\prime} \paren{\fe{X_1}\neq Y_1,\ \fep{X_1}\neq Y_1 | S^{0}_{1}} \probi{e,e'}{S^{0}_{1}}\\
&  +n(n-1)\sum_{\ell=1}^{4} \P_{e,e^\prime} \paren{\fe{X_1}\neq Y_1,\ \fep{X_2}\neq Y_2 \mid S^{\ell}_{1,2}} \probi{e,e'}{S^{\ell}_{1,2}} .
\end{align*}
Furthermore since
\begin{eqnarray*}
\frac{1}{p^2}\left[n\probi{e,e'}{S^{0}_{1}}+
n(n-1)\sum_{\ell=1}^{4} \probi{e,e'}{S^{\ell}_{1,2}}\right] = \frac{1}{p^2}\sum_{i,j}\probi{e,e'}{i\notin e,\ j\notin e'} = 1 ,
\end{eqnarray*}
it comes	
\begin{eqnarray}\label{Equ : DecompositionPartI}
\esp{\Rh_p^2(\A_k^{\Dn})} - \P_{e,e^\prime} \paren{\f{X_0}\neq Y_0,\fe{X_{1}}\neq Y_{1}} &=& \frac{n}{p^2}A + \frac{n(n-1)}{p^2}B ,
\end{eqnarray}
where
\begin{align*}
A & = \left[\P_{e,e^\prime} \paren{\fe{X_1}\neq Y_1,\ \fep{X_1}\neq Y_1 \mid S^{0}_{1}}  - \P_{e,e^\prime} \paren{\f{X_0}\neq Y_0,\fe{X_{1}}\neq Y_{1} \mid S^{0}_{1}}\right] \\
& \times  \probi{e,e'}{S^{0}_{1}} ,\\
\mbox{and}\quad B & = \sum_{\ell=1}^{4} \left[ \P_{e,e^\prime} \paren{\fe{X_1}\neq Y_1,\ \fep{X_2}\neq Y_2 \mid S^{\ell}_{1,2}}  - \P_{e,e^\prime} \paren{\f{X_0}\neq Y_0,\fe{X_{1}}\neq Y_{1} \mid S^{\ell}_{1,2}} \right] \\ 
& \times  \probi{e,e'}{S^{\ell}_{1,2}} .
\end{align*}


\noindent$\bullet$ \textbf{Upper bound for $A$:}\\
To upper bound $A$, simply notice that:
\begin{eqnarray*}
A &\leq&\probi{e,e'}{S^{0}_{i}} \leq \probi{e,e'}{i \notin e,\ i \notin e'} \leq\left(\frac{p}{n}\right)^2 .
\end{eqnarray*}

\noindent$\bullet$ \textbf{Upper bound for $B$:}\\
To obtain an upper bound for $B$, one needs to upper bound
\begin{eqnarray}\label{Equ : Difference_absent}
\P_{e,e^\prime} \paren{\fe{X_1}\neq Y_1,\ \fep{X_2}\neq Y_2 \mid S^{\ell}_{1,2}} - \P_{e,e^\prime} \paren{\f{X_0}\neq Y_0,\fe{X_{1}}\neq Y_{1} \mid S^{\ell}_{1,2}} ,
\end{eqnarray}
which depends on $\ell$, i.e. on the fact that index 2 belongs or not to the training set $e$.
\begin{itemize}
    \item If $2 \not\in e$ (i.e. $\ell = 1$ or 3):
    Then, Lemma~\ref{lem.2.notin.e} proves
    \begin{align*}
    \eqref{Equ : Difference_absent} \leq \frac{4p\sqrt{k}}{\sqrt{2\pi}n} \enspace\cdot
    \end{align*}

    \item If $2 \in e$ (i.e. $\ell = 2$ or 4):
    Then, Lemma~\ref{lem.2.in.e} settles
    \begin{eqnarray*}
    \eqref{Equ : Difference_absent} \leq \frac{8\sqrt{k}}{\sqrt{2\pi} (n-p)} + \frac{4p\sqrt{k}}{\sqrt{2\pi} n} \enspace\cdot
    \end{eqnarray*}
\end{itemize}
Combining the previous bounds and Lemma~\ref{lem.combinatoire.prob.reechantillons} leads to
\begin{align*}
B &\leq \gp{\frac{4p\sqrt{k}}{\sqrt{2\pi} n}}
\croch{ \probi{e,e'}{S^{1}_{1,2}}+\probi{e,e'}{S^{3}_{1,2}} }  + \gp{\frac{8\sqrt{k}}{\sqrt{2\pi} (n-p)} + \frac{4p\sqrt{k}}{\sqrt{2\pi} n}} \croch{ \probi{e,e'}{S^{2}_{1,2}}+ \probi{e,e'}{S^{4}_{1,2}} }  \\
&\leq \frac{2\sqrt{2}}{\sqrt{\pi}}\sqrt{k}\left[ \frac{p}{n} \croch{ \probi{e,e'}{S^{1}_{1,2}}+\probi{e,e'}{S^{3}_{1,2}} }
 + \gp{\frac{2}{n-p} + \frac{p}{n}} \croch{ \probi{e,e'}{S^{2}_{1,2}}+ \probi{e,e'}{S^{4}_{1,2}} } \right] \\
&\leq  \frac{2\sqrt{2}}{\sqrt{\pi}}\sqrt{k}\left[\frac{p}{n}\probi{e,e'}{i\notin e,\ j\notin e'} + \frac{2}{n-p}\left(\probi{e,e'}{S^{2}_{1,2}}+ \probi{e,e'}{S^{4}_{1,2}}\right)\right] \\
&\leq \frac{2\sqrt{2}}{\sqrt{\pi}}\sqrt{k}\left[\frac{p}{n}\gp{\frac{p}{n}}^2 + \frac{2}{n-p}\left(
\frac{(n-p)p^2(p-1)}{n^2(n-1)^2} + \frac{(n-p)^2p^2}{n^2(n-1)^2}\right)\right] \\
&\leq \frac{2\sqrt{2}}{\sqrt{\pi}}\sqrt{k}\gp{\frac{p}{n}}^2\left[\frac{p}{n} + \frac{2}{n-1}\right] .
\end{align*}
Back to Eq.~\eqref{Equ : DecompositionPartI}, one deduces
\begin{align*}
\esp{\Rh_p^2(\A_k^{\Dn})} - \P_{e,e^\prime} \paren{\f{X_0}\neq Y_0,\fe{X_{1}}\neq Y_{1}}
&= \frac{n}{p^2}A + \frac{n(n-1)}{p^2}B \\
& \leq \frac{1}{n} + \frac{2\sqrt{2}}{\sqrt{\pi}}\frac{(p+2)\sqrt{k}}{n}\enspace\cdot
\end{align*}

\paragraph{Upper bound of \eqref{Equ : RiskQuadraticPartII}}
First observe that
\begin{eqnarray*}
\P_{e,e^\prime} \paren{\f{X_0}\neq Y_0,\fe{X_{i}}\neq Y_{i}\mid i\notin e} = \P_{e,e^\prime} \paren{\widehat{f}_k^{(-1)}(X_0)\neq Y_0,\fe{X_{n+1}}\neq Y_{n+1}}
\end{eqnarray*}
where $\widehat{f_k}^{(-1)}$ is built on sample $(X_2,Y_2),...,(X_{n+1},Y_{n+1})$. One has
\begin{eqnarray*}
&& \P\paren{\f{X_0}\neq Y_0,\f{X_{n+1}}\neq Y_{n+1}} - \P_{e,e^\prime} \paren{\f{X_0}\neq Y_0,\fe{X_{i}}\neq Y_{i} \mid i\notin e} \\
 &=& \P\paren{\f{X_0}\neq Y_0,\f{X_{n+1}}\neq Y_{n+1}} - \P_{e,e^\prime} \paren{\widehat{f_k}^{(-1)}(X_0)\neq Y_0,\fe{X_{n+1}}\neq Y_{n+1}}\\
&\leq& \P\paren{\f{X_0}\neq \widehat{f_k}^{(-1)}(X_0)} + \P_{e,e^\prime} \paren{\fe{X_{n+1}}\neq \f{X_{n+1}}}\\
&\leq& \frac{4\sqrt{k}}{\sqrt{2\pi} n} + \frac{4p\sqrt{k}}{\sqrt{2\pi} n}\enspace,
\end{eqnarray*}
where we used Lemma~\ref{Lemma : HOresult} again to obtain the last inequality.\\

\noindent\textbf{Conclusion:}

The conclusion simply results from combining bonds \eqref{Equ : RiskQuadraticPartI} and \eqref{Equ : RiskQuadraticPartII}, which leads to
\begin{eqnarray*}
\esp{\left(\Rh_{p,n}-L_n\right)^2} \leq \frac{2\sqrt{2}}{\sqrt{\pi}}\frac{(2p+3)\sqrt{k}}{n} + \frac{1}{n} \enspace\cdot
\end{eqnarray*}

\subsubsection{Combinatorial lemmas}

All the lemmas of the present section are proved with the notation introduced at the beginning of Section~\ref{appendix.proof.consistency}.

\medskip

\begin{lem}\label{lem.combinatoire.prob.reechantillons}
For any $1\leq i\neq j\leq n$,
\begin{align*}
\probi{e,e'}{S^{1}_{i,j}}
&= \frac{\cb{n-2}{n-p}}{\cb{n}{n-p}}\times\frac{\cb{n-2}{n-p}}{\cb{n}{n-p}} ,\qquad
\probi{e,e'}{S^{2}_{i,j}}
= \frac{\cb{n-p-1}{n-2}}{\cb{n}{n-p}}\times\frac{\cb{n-p}{n-2}}{\cb{n}{n-p}} \enspace,\\
\probi{e,e'}{S^{3}_{i,j}}
&= \frac{\cb{n-p}{n-2}}{\cb{n}{n-p}}\frac{\cb{n-p-1}{n-2}}{\cb{n}{n-p}} , \qquad\ 
\probi{e,e'}{S^{4}_{i,j}}
= \frac{\cb{n-p-1}{n-2}}{\cb{n}{n-p}}\times\frac{\cb{n-p-1}{n-2}}{\cb{n}{n-p}} \enspace\cdot
\end{align*}
\end{lem}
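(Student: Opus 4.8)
The plan is to reduce each probability to an elementary count via two structural facts: $e$ and $e'$ are independent, and each is uniform over the $\binom{n}{n-p}$ subsets of $\{1,\ldots,n\}$ of cardinality $n-p$. Each event $A^\ell_{e,e',i,j}$ is an intersection of two constraints bearing on $e$ and two bearing on $e'$, so by independence its probability factorizes as $\probi{e}{\cdots}\,\probi{e'}{\cdots}$, and it suffices to evaluate one sample's factor at a time. I would also note at the outset that, by exchangeability of the coordinates, these probabilities depend only on the fact that $i\neq j$, so one may freely take $i=1$ and $j=2$.

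Next I would record the two counting primitives that are actually needed. For a uniform $(n-p)$-subset $e$, prescribing both distinguished indices to lie \emph{outside} $e$ leaves $\binom{n-2}{n-p}$ admissible subsets, whereas prescribing exactly one of them to lie \emph{inside} and the other outside leaves $\binom{n-2}{n-p-1}$ (place the forced element, then choose the remaining $n-p-1$ entries among the other $n-2$ indices); dividing by $\binom{n}{n-p}$ turns each count into the corresponding probability. A useful bookkeeping observation is that across all four events index $i$ is always required to be absent from $e$ and index $j$ always absent from $e'$ (these are the conditions $i\notin e$ and $j\notin e'$ common to $A^1,\dots,A^4$); only the status of $j$ in $e$ and of $i$ in $e'$ varies, so a ``both inside'' count never arises.

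It then remains to read off, event by event, which primitive applies to $e$ and which to $e'$: $A^1$ pairs two ``both outside'' factors; $A^2$ pairs a ``one inside'' factor for $e$ (since there $j\in e$) with a ``both outside'' factor for $e'$; $A^3$ is the mirror image; and $A^4$ pairs two ``one inside'' factors. Substituting the primitives yields the four displayed products. The computation carries no genuine difficulty; the only step requiring care is the bookkeeping of the previous paragraph, namely correctly assigning each of the four atomic membership constraints to $e$ or to $e'$ and distinguishing an inclusion constraint (which turns the count into $\binom{n-2}{n-p-1}$) from an exclusion one (which gives $\binom{n-2}{n-p}$), since a single misassignment would swap a $\binom{n-2}{n-p}$ for a $\binom{n-2}{n-p-1}$ in one of the factors.
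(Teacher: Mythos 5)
Your proposal is correct and follows essentially the same route as the paper's proof: factor each event by the independence of $e$ and $e'$ into one constraint on each sample, then evaluate the two elementary counts $\binom{n-2}{n-p}$ (both indices excluded) and $\binom{n-2}{n-p-1}$ (one forced in, one excluded) over the total $\binom{n}{n-p}$. The paper simply writes out the resulting chain of equalities for each of the four events without isolating the two counting primitives, so your presentation is, if anything, slightly more systematic.
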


\begin{proof}[Proof of Lemma~\ref{lem.combinatoire.prob.reechantillons}]
	Along the proof, we repeatedly exploit the independence of the random variables $e$ and $e^\prime$, which are set of $n-p$ distinct indices with the discrete uniform distribution over $\Enp$.
	
	Note also that an important ingredient is that the probability of each one of the following events does not depend on the particular choice of the indices $(i, j)$, but only on the fact that $i\neq j$.
\begin{eqnarray*}
\probi{e,e'}{S^{1}_{i,j}} &=& \probi{e,e'}{i \notin e,\ j \notin e',\ i \notin e',\ j \notin e} \\
&=& \probi{e}{i \notin e,\ j \notin e}\probi{e'}{j \notin e',\ i \notin e'} = \frac{\cb{n-2}{n-p}}{\cb{n}{n-p}}\times\frac{\cb{n-2}{n-p}}{\cb{n}{n-p}} \enspace\cdot \\
\probi{e,e'}{S^{2}_{i,j}} &=& \probi{e,e'}{i \notin e,\ j \notin e',\ i \notin e',\ j \in e} \\
&=& \probi{e}{i \notin e,\ j \in e} \probi{e'}{j \notin e',\ i \notin e'} = \frac{\cb{n-p-1}{n-2}}{\cb{n}{n-p}}\times\frac{\cb{n-p}{n-2}}{\cb{n}{n-p}}\enspace\cdot \\
\probi{e,e'}{S^{3}_{i,j}} &=& \probi{e,e'}{i \notin e,\ j \notin e',\ i \in e',\ j \notin e} \\
&=& \probi{e}{i \notin e,\ j \notin e} \probi{e'}{j \notin e',\ i \in e'} = \frac{\cb{n-p}{n-2}}{\cb{n}{n-p}}\frac{\cb{n-p-1}{n-2}}{\cb{n}{n-p}}\enspace\cdot \\
\probi{e,e'}{S^{4}_{i,j}} &=& \probi{e,e'}{i \notin e,\ j \notin e',\ i \in e',\ j \in e} \\
&=& \probi{e}{i \notin e,\ j \in e} \probi{e'}{j \notin e',\ i \in e'} = \frac{\cb{n-p-1}{n-2}}{\cb{n}{n-p}}\times\frac{\cb{n-p-1}{n-2}}{\cb{n}{n-p}}\enspace\cdot 
\end{eqnarray*}

\end{proof}

\medskip

\begin{lem}\label{lem.2.notin.e}
With the above notation, for $\ell \in \acc{1,3}$, it comes
{\small
\begin{align*} 
	\P_e\paren{\fe{X_1}\neq Y_1,\ \fep{X_2}\neq Y_2 \mid S^{\ell}_{1,2}} - \P_e\paren{\f{X_0}\neq Y_0,\fe{X_{1}}\neq Y_{1} \mid S^{\ell}_{1,2}} \leq \frac{4p\sqrt{k}}{\sqrt{2\pi}n} \enspace\cdot 
\end{align*}}

\end{lem}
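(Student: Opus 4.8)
The plan is to isolate the event common to both probabilities, namely $C=\{\fe{X_1}\neq Y_1\}$, and then to treat the two remaining (differing) factors separately: first by an exchangeability substitution that replaces the external test pair $(X_2,Y_2)$ by the fresh copy $(X_0,Y_0)$, and second by the standard $k$NN ``change-of-training-set'' estimate of Lemma~\ref{Lemma : HOresult}. The whole argument hinges on the observation that for $\ell\in\{1,3\}$ the conditioning event $A_{e,e',1,2}^\ell$ forces $2\notin e$ \emph{and} $2\notin e'$, so that neither resampled classifier $\fe{\cdot}$ nor $\fep{\cdot}$ uses $Z_2$.

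First I would rewrite the left-hand side as $\P(C\cap\{\fep{X_2}\neq Y_2\}\mid A_{e,e',1,2}^\ell)-\P(C\cap\{\f{X_0}\neq Y_0\}\mid A_{e,e',1,2}^\ell)$ and note that $A_{e,e',1,2}^\ell$ constrains only the pair $(e,e')$, hence leaves the i.i.d.\ structure of the $Z_i$ (and of the fresh copies $Z_0,Z_{n+1}$) intact. Conditioning on the $\sigma$-field generated by $(e,e')$ and by all the $Z_i$ except $Z_0$ and $Z_2$, both $C$ and the classifier $\fep{\cdot}$ become measurable: this uses exactly $2\notin e$ (so $C$ does not depend on $Z_2$) and $2\notin e'$ (so $\fep{\cdot}$ does not depend on $Z_2$), together with the fact that index $0$ lies in no training set. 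Since $Z_0$ and $Z_2$ are then i.i.d.\ and independent of this $\sigma$-field, $\P(\fep{X_2}\neq Y_2\mid\cdot)=\P(\fep{X_0}\neq Y_0\mid\cdot)$, and averaging against $\1_C$ yields the substitution $\P(C\cap\{\fep{X_2}\neq Y_2\}\mid A_{e,e',1,2}^\ell)=\P(C\cap\{\fep{X_0}\neq Y_0\}\mid A_{e,e',1,2}^\ell)$.

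After this substitution the two remaining events, $\{\fep{X_0}\neq Y_0\}$ and $\{\f{X_0}\neq Y_0\}$, sit at the \emph{same} fresh test point $X_0$ and differ only through the training set ($e'$ of size $n-p$ versus the full sample of size $n$). The elementary bound $\P(C\cap D_1)-\P(C\cap D_2)\le\P(D_1\cap D_2^c)$ together with the inclusion $\{\fep{X_0}\neq Y_0\}\cap\{\f{X_0}=Y_0\}\subseteq\{\fep{X_0}\neq\f{X_0}\}$ then reduces the whole difference to $\P(\fep{X_0}\neq\f{X_0}\mid A_{e,e',1,2}^\ell)$. Because conditioning on $A_{e,e',1,2}^\ell$ merely fixes the membership pattern of the indices, and the probability that the $k$NN prediction at a fresh point changes under removal of $p$ sample points depends only on $p$ (by exchangeability), Lemma~\ref{Lemma : HOresult} bounds this conditional probability by $\frac{4p\sqrt{k}}{\sqrt{2\pi}n}$, which is the claim.

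The delicate point — and the only one requiring real care — is the exchangeability substitution of the second paragraph: one must verify that the retained event $C$ and the classifier $\fep{\cdot}$ are both independent of $Z_2$, which is precisely the information $2\notin e,\ 2\notin e'$ carried by $A_{e,e',1,2}^\ell$ for $\ell\in\{1,3\}$. This is exactly what fails when $2\in e$, i.e.\ for $\ell\in\{2,4\}$: there $Z_2$ enters the classifier $\fe{\cdot}$, the substitution is no longer free, and one must pay an additional leave-one-out term of order $\sqrt{k}/(n-p)$ — which is why Lemma~\ref{lem.2.in.e} carries the larger bound.
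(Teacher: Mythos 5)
Your proof is correct and follows essentially the same route as the paper: an exchangeability swap of $Z_0$ and $Z_2$ (valid precisely because $2\notin e$ and $2\notin e'$ for $\ell\in\{1,3\}$), followed by the symmetric-difference bound and the stability estimate of Lemma~\ref{Lemma : HOresult} for $p$ removed observations. The only (cosmetic) difference is that you apply the swap to the first probability, comparing $\fep{X_0}$ with $\f{X_0}$ at the fresh point $X_0$, whereas the paper rewrites the second probability via a classifier $\widehat{f_k}^{(-2)}$ built with $Z_2$ replaced by $Z_0$ and compares at $X_2$; both reduce to the same application of the lemma and yield the same bound.
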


\begin{proof}[Proof of Lemma~\ref{lem.2.notin.e}]
First remind that as a test sample element $Z_0$ cannot belong to either $e$ or $e'$. Consequently, an exhaustive formulation of
$$\P_e\paren{\f{X_0}\neq Y_0,\fe{X_{1}}\neq Y_{1} \mid S^{\ell}_{1,2}} = \P_e\paren{\f{X_0}\neq Y_0,\fe{X_{1}}\neq Y_{1} \mid S^{\ell}_{1,2} } \ .$$
Then it results
\begin{eqnarray*}
\P_e\paren{\f{X_0}\neq Y_0,\fe{X_{1}}\neq Y_{1} \mid S^{\ell}_{1,2}} 
&=&\P_e\paren{\widehat{f_k}^{(2)}(X_2)\neq Y_2,\ \fe{X_1}\neq Y_1 \mid S^{\ell}_{1,2}},
\end{eqnarray*}
where $\widehat{f_k}^{(2)}$ is built on sample $(X_0,Y_0),(X_1,Y_1),(X_3,Y_3),...,(X_n,Y_n)$.

Hence Lemma~\ref{Lemma : HOresult} implies
\begin{align*}
&\P_{e,e^\prime} \paren{\fe{X_1}\neq Y_1,\ \fep{X_2}\neq Y_2 \mid S^{\ell}_{1,2}} - \P_{e,e^\prime} \paren{\f{X_0}\neq Y_0,\fe{X_{1}}\neq Y_{1} \mid S^{\ell}_{1,2}} \\
&= \P_{e,e^\prime} \paren{\fe{X_1}\neq Y_1,\ \fep{X_2}\neq Y_2 \mid S^{\ell}_{1,2}} -\P_{e,e^\prime} \paren{\widehat{f_k}^{(2)}(X_2)\neq Y_2,\ \fe{X_1}\neq Y_1 \mid S^{\ell}_{1,2} }\\
& \leq \P_{e,e^\prime} \paren{\left\{\fe{X_1}\neq Y_1 \right\}\triangle \left\{\fe{X_1}\neq Y_1\right\} \mid S^{\ell}_{1,2} }+ \P_{e,e^\prime} \paren{\left\{\widehat{f_k}^{(2)}(X_2)\neq Y_2 \right\} \triangle \left\{ \fep{X_2}\neq Y_2\right\}\mid S^{\ell}_{1,2} }\\
&= \P_{e,e^\prime} \paren{\widehat{f_k}^{(2)}(X_2)\neq \fep{X_2}\mid S^{\ell}_{1,2}} \leq  \frac{4p\sqrt{k}}{\sqrt{2\pi} n} \enspace\cdot
\end{align*}

\end{proof}

\medskip

\begin{lem}\label{lem.2.in.e}
With the above notation, for $\ell \in \acc{2,4}$, it comes
\begin{align*}
& \P_{e,e^\prime} \paren{\fe{X_1}\neq Y_1,\ \fep{X_2}\neq Y_2 \mid S^{\ell}_{1,2}} - \P_{e,e^\prime} \paren{\f{X_0}\neq Y_0,\fe{X_{1}}\neq Y_{1} \mid S^{\ell}_{1,2}} \\
& \leq \frac{8\sqrt{k}}{\sqrt{2\pi} (n-p)} + \frac{4p\sqrt{k}}{\sqrt{2\pi} n} \enspace\cdot
\end{align*}

\end{lem}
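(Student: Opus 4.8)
The plan is to establish Lemma~\ref{lem.2.in.e} by mirroring the structure of the proof of Lemma~\ref{lem.2.notin.e}, but carefully accounting for the fact that index $2$ now belongs to the training set $e$. First I would write an exhaustive formulation of the two conditional probabilities, making explicit that the test point $Z_0$ can belong to neither $e$ nor $e'$, so that the conditioning events get augmented with $\{0\notin e,\,0\notin e'\}$. The key structural observation is that in the case $\ell\in\{2,4\}$ we have $2\in e$, so the classifier $\fe{\cdot}$ is built on a sample that already \emph{contains} $Z_2$, whereas $\fep{X_2}$ is evaluated at a point whose index $2$ is \emph{not} in $e'$ (since $\ell\in\{2,4\}$ forces $2\notin e'$). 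This asymmetry is the essential difference with the previous lemma and is the source of the extra $8\sqrt{k}/(\sqrt{2\pi}(n-p))$ term.

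Next I would introduce an appropriate ``swapped'' classifier to realize the comparison, playing the same role as $\widehat{f_k}^{(-2)}$ did in Lemma~\ref{lem.2.notin.e}. Because $2\in e$ here, replacing or removing $Z_2$ from the training sample of size $n-p$ changes a classifier trained on $n-p$ points, so the relevant perturbation is the effect of altering one observation among the $n-p$ training data rather than among all $n$. I would then use the triangle-inequality (symmetric-difference) bound $\P(\{A\neq Y\}\triangle\{B\neq Y\})\le \P(A\neq B)$ to split the difference into two probabilities of disagreement between pairs of $k$NN classifiers: one accounting for swapping $Z_2$ inside the training set of size $n-p$, and one accounting for the discrepancy between $e$-based and $e'$-based classifiers. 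Applying Lemma~\ref{Lemma : HOresult}, which controls $\prob{\hat f^e_k(X)\neq \hat f_k(X)}$ type quantities, to each piece yields the two terms: the disagreement at the training-sample level contributes $8\sqrt{k}/(\sqrt{2\pi}(n-p))$ (note the denominator $n-p$ rather than $n$, precisely because the perturbation is among $n-p$ points), and the remaining piece contributes $4p\sqrt{k}/(\sqrt{2\pi}n)$ exactly as in Lemma~\ref{lem.2.notin.e}.

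The main obstacle I expect is bookkeeping the conditioning events correctly: one must verify that after augmenting with $\{0\notin e,\,0\notin e'\}$ and invoking the exchangeability of the $Z_i$, the distribution of the relevant classifier pair is genuinely of the form covered by Lemma~\ref{Lemma : HOresult}, i.e.\ that conditioning on the combinatorial event $A^\ell_{e,e',1,2}$ does not distort the distribution of the data points entering the classifiers. The subtle point is ensuring that when $2\in e$ the constant in front of the $1/(n-p)$ term is correctly $8$ rather than $4$; this factor of two arises from the two separate symmetric-difference contributions (one for $X_1$ via the $\fe{\cdot}$ comparison and one for $X_2$ via the removal of $Z_2$ from the size-$(n-p)$ training set), each bounded through Lemma~\ref{Lemma : HOresult} applied with the appropriate sample size. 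Once the conditioning is handled cleanly, the remaining estimates are routine applications of the already-established Lemma~\ref{Lemma : HOresult}.
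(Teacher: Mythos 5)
Your proposal is correct and follows essentially the same route as the paper: rewrite $\P\paren{\f{X_0}\neq Y_0,\fe{X_{1}}\neq Y_{1} \mid A_{e,e',1,2}^\ell}$ using a classifier built with $Z_2$ replaced by $Z_0$, split the difference into two disagreement probabilities via the symmetric-difference bound, and control each with Lemma~\ref{Lemma : HOresult}; the resulting two terms $\frac{8\sqrt{k}}{\sqrt{2\pi}(n-p)}$ and $\frac{4p\sqrt{k}}{\sqrt{2\pi}n}$ are attributed exactly as in the paper. One imprecision in your closing remark should be fixed: the factor $8$ does \emph{not} arise from adding an ``$X_1$ contribution'' and an ``$X_2$ contribution'' (the $X_2$ comparison is precisely the piece that yields $\frac{4p\sqrt{k}}{\sqrt{2\pi}n}$, since it compares a classifier on $n$ points with one on the $n-p$ points of $e'$). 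Rather, it comes from the single $X_1$ comparison between $\fe{\cdot}$ and the classifier on $e$ with $Z_2$ swapped for $Z_0$: a swap inside a training set of size $n-p$ costs two applications of Lemma~\ref{Lemma : HOresult} (one deletion and one insertion, each bounded by $\frac{4\sqrt{k}}{\sqrt{2\pi}(n-p)}$), whence $2\times\frac{4\sqrt{k}}{\sqrt{2\pi}(n-p)}=\frac{8\sqrt{k}}{\sqrt{2\pi}(n-p)}$.
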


\begin{proof}[Proof of Lemma~\ref{lem.2.in.e}]
As for the previous lemma, first notice that
\begin{eqnarray*}
\P_{e,e^\prime} \paren{\f{X_0}\neq Y_0,\fe{X_{1}}\neq Y_{1} \mid S^{\ell}_{1,2}} =\P_{e,e^\prime} \paren{\widehat{f_k}^{(2)}(X_2)\neq Y_2,\ \widehat{f_k}^{e_0}(X_1)\neq Y_1 \mid S^{\ell}_{1,2}} ,
\end{eqnarray*}
where $\widehat{f_k}^{e_0}$ is built on sample $e$ with observation $(X_2,Y_2)$ replaced with $(X_0,Y_0)$. Then
{\small
\begin{align*}
&\P_{e,e^\prime} \paren{\fe{X_1}\neq Y_1,\ \fep{X_2}\neq Y_2 \mid S^{\ell}_{1,2}}- \P_{e,e^\prime} \paren{\f{X_0}\neq Y_0,\fe{X_{1}}\neq Y_{1} \mid S^{\ell}_{1,2}} \\
&=\small \P_{e,e^\prime} \paren{\fe{X_1}\neq Y_1,\ \fep{X_2}\neq Y_2 \mid S^{\ell}_{1,2}}-\P_{e,e^\prime} \paren{\widehat{f_k}^{(2)}(X_2)\neq Y_2,\ \widehat{f_k}^{e_0}(X_1)\neq Y_1 \mid S^{\ell}_{1,2}}  \\
&\leq \P_{e,e^\prime} \paren{\left\{\fe{X_1}\neq Y_1\right\}\triangle \left\{\widehat{f_k}^{e_0}(X_1)\neq Y_1\right\} \mid S^{\ell}_{1,2}} +\P_{e,e^\prime} \paren{\left\{\widehat{f_k}^{(2)}(X_2)\neq Y_2\right\} \triangle\left\{ \fep{X_2}\neq Y_2\right\} \mid S^{\ell}_{1,2}}\\
&= \P_{e,e^\prime} \paren{\fe{X_1}\neq \widehat{f_k}^{e_0}(X_1)\mid S^{\ell}_{1,2}} + \P_{e,e^\prime} \paren{\widehat{f_k}^{(2)}(X_2)\neq  \fep{X_2} \mid S^{\ell}_{1,2}} \leq \frac{8\sqrt{k}}{\sqrt{2\pi} (n-p)} + \frac{4p\sqrt{k}}{\sqrt{2\pi} n} \enspace\cdot
\end{align*}}

\end{proof}

\subsection{Proof of Proposition~\ref{res.counter.example.bias}}
\label{sec.proof.couter.example}
%
%

	The bias of the L1O estimator is equal to
	\begin{align*}
	& \E\croch{ L\paren{\A_k^{\Dn}} - L\paren{\A_k^{\D_{n-1}}}  } \\
	& = -2 \E\croch{ \paren{\eta(X)-1/2} \paren{ \E\croch{ \E\croch{ \A_k^{\Dn}(X) - \A_k^{\D_{n-1}}(X) \mid X_{(k+1)}(X), X} \mid X } } } \\
	& = -2 \E\croch{ \paren{\eta(X)-1/2} \paren{ \E\croch{ \E\croch{ \A_k^{\Dn}(X) - \A_k^{\D_{n-1}}(X) \mid X_{(k+1)}(X), X} \mid X } } } \\
	& = 1/2 \left\lbrace  \E\croch{ \A_k^{\Dn}(0) - \A_k^{\D_{n-1}}(0) \mid X_{(k+1)}(0)=0, X=0}\P\croch{ X_{(k+1)}(0)=0\mid X=0} \right.\\
	&\left.\hspace*{2cm}+ \E\croch{ \A_k^{\Dn}(0) - \A_k^{\D_{n-1}}(0) \mid X_{(k+1)}(0)=1, X=0}\P\croch{ X_{(k+1)}(0)=1\mid X=0}  \right\rbrace \\
	& -   1/2 \left\lbrace  \E\croch{ \A_k^{\Dn}(1) - \A_k^{\D_{n-1}}(1) \mid X_{(k+1)}(1)=0, X=1}\P\croch{ X_{(k+1)}(1)=0\mid X=1} \right.\\
	& \left. \hspace*{2cm}+ \E\croch{ \A_k^{\Dn}(1) - \A_k^{\D_{n-1}}(1) \mid X_{(k+1)}(1)=1, X=1}\P\croch{ X_{(k+1)}(1)=1\mid X=1}  \right\rbrace ,
	\end{align*}
	where $X_{(k+1)}(x)$ denotes the $k+1$-th neighbor of $x$.
	
	Then, a few remarks lead to simplify the above expression.
	\begin{itemize}
		\item On the one hand it is easy to check that
		\begin{align*}
		&\E\croch{ \A_k^{\Dn}(0) - \A_k^{\D_{n-1}}(0) \mid X_{(k+1)}(0)=0, X=0} \\ & =	\E\croch{ \A_k^{\Dn}(1) - \A_k^{\D_{n-1}}(1) \mid X_{(k+1)}(1)=1, X=1}  = 0 ,
		\end{align*}
		since all of the $k+1$ nearest neighbors share the same label.
		
		\item On the other hand, let us notice	
		\begin{align*}
		&	\E\croch{ \A_k^{\Dn}(0) - \A_k^{\D_{n-1}}(0) \mid X_{(k+1)}(0)=1, X=0} \\
		& = \P\croch{ \A_k^{\Dn}(0)=1, \A_k^{\D_{n-1}}(0)=0 \mid   X_{(k+1)}(0)=1, X=0} \\
		& - \P\croch{ \A_k^{\Dn}(0)=0, \A_k^{\D_{n-1}}(0)=1 \mid   X_{(k+1)}(0)=1, X=0} .
		\end{align*}
		Then knowing $X_{(k+1)}(X)$ and $X$ are not equal implies the only way for $\A_k^{\Dn}$ and $\A_k^{\D_{n-1}}$ to differ is that the numbers of $k$ nearest neighbors of each label are almost equal, that is either equal to $(k-1)/2$ or to $(k+1)/2$ ($k$ is odd by assumption).
		
		With $N_0^1$ (respectively $\tilde N_0^1$) denoting the number of 1s among th $k$ nearest neighbors of $X=0$ among $X_1,\ldots,X_n$ (resp. $X_1,\ldots,X_{n-1}$), the proof of Theorem~3 in \cite{Chaudhuri_Dasgupta2014} leads to
		\begin{align*}
		& \P\croch{ \A_k^{\Dn}(0)=1, \A_k^{\D_{n-1}}(0)=0 \mid   X_{(k+1)}(0)=1, X=0} \\
		& = \P\croch{  n \in V_k(0), N_0^1 = (k+1)/2, \tilde N_0^1 = (k-1)/2 \mid   X_{(k+1)}(0)=1, X=0} \\
		& = \frac{k}{n}  \times \P\croch{ \tilde N_0^1 = (k-1)/2 \mid  N_0^1 = (k+1)/2,  X_{(k+1)}(0)=1, X=0} \\
		& \times \P\croch{ N_0^1 = (k+1)/2  \mid   X_{(k+1)}(0)=1, X=0} \\
		& = \frac{k}{n} \times \P\croch{\mathcal{H}\paren{ \frac{k+1}{2},\frac{k-1}{2};1}=1 } \cdot \eta_1 \times {k \choose (k+1)/2} \bar\eta^{(k+1)/2}\paren{1- \bar{\eta}}^{(k-1)/2} \\
		& = \frac{k+1}{2n} \times  \eta_1 \times {k \choose (k+1)/2} \bar\eta^{(k+1)/2}\paren{1- \bar{\eta}}^{(k-1)/2} ,
		\end{align*}
		where $\mathcal{H}(a,b;c)$ denotes a hypergeometric random variable with $a$ successes in a population of cardinality $a+b$, and $c$ draws, and $\bar \eta = \pi_0 \eta_0 + (1-\pi_0) \eta_1 = 1/2$.

		Following the same reasoning for $\P\croch{ \A_k^{\Dn}(0)=0, \A_k^{\D_{n-1}}(0)=1 \mid   X_{(k+1)}(0)=1, X=0} $ and recalling that $\eta_0=0$ and $\eta_1=1$ by assumption, it results
		\begin{align*}
		\E\croch{ \A_k^{\Dn}(0) - \A_k^{\D_{n-1}}(0) \mid X_{(k+1)}(0)=1, X=0}  = - \frac{k+1}{2n}  \times {k \choose (k+1)/2} \paren{1/2}^{k} .
		\end{align*}
		
		\item Similar calculations applied to $X=1$ finally lead to
		\begin{align*}
		\E\croch{ L\paren{\A_k^{\Dn}} - L\paren{\A_k^{\D_{n-1}}}  }
		& = \frac{k+1}{2n}  \times {k \choose (k+1)/2} \paren{1/2}^{k} \times \P\croch{ X_{(k+1)}(0)=1 \mid X=0 } \\
		& = \frac{k+1}{2n}  \times {k \choose (k+1)/2} \paren{1/2}^{k} \times \P\croch{ \mathcal{B}(n,1/2)\leq k } .
		\end{align*}
		
		\item The conclusion then follows from considering $k\geq n/2$ which entails that $\P\croch{ \mathcal{B}(n,1/2)\leq k } \geq 1/2 $, and also by noticing that
		\begin{align*}
		\frac{k+1}{2n}  \times {k \choose (k+1)/2} \paren{1/2}^{k} \geq C \frac{\sqrt{k}}{n} ,
		\end{align*}
		where denotes a numeric constant independent of $n$ and $k$.
		\end{itemize}
\newpage
\section{Technical results}

\subsection{Main inequalities}

%
%
%
%
%

\subsubsection{From moment to exponential inequalities}

\begin{prop}[see also \cite{Arl:2007:phd}, Lemma~8.10]
\label{prop.moment.exponential}
  Let $X$ denote a real valued random variable, and assume there exist $C\geq 1$, $\lambda_1,\ldots,\lambda_N>0$, and $\alpha_1,\ldots,\alpha_N>0$ ($N\in\N^*$) such that for every $q\geq q_0$,
\begin{align*}
  \E\croch{ \abs{X}^q } \leq C \paren{ \sum_{i=1}^N \lambda_i q^{\alpha_i} }^q . \hfill
\end{align*}
Then for every $t>0$,
\begin{eqnarray}\label{ineq.moment.exponential.deviation}
  \P\croch{  \abs{X} >t }
& \leq C e^{q_0 \min_j \alpha_j} e^{ - (\min_i \alpha_i )  e^{-1} \min_j\acc{ \paren{\frac{t}{N \lambda_j}}^{\frac{1}{\alpha_j}} } } ,
\end{eqnarray}
%
Furthermore for every $x>0$, it results
\begin{eqnarray}\label{ineq.moment.exponential.error}
  \P\croch{ \abs{X} > \sum_{i=1}^N \lambda_i \paren{ \frac{e x }{\min_j \alpha_j} }^{\alpha_i} } \leq C e^{q_0 \min_j \alpha_j}  \cdot e^{-x} . \hfill
\end{eqnarray}

\end{prop}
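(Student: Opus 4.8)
The plan is to combine Markov's inequality with an optimisation over the moment order $q$, which is the classical route from polynomial moment control to exponential tails (the statement is essentially Lemma~8.10 in \cite{Arl:2007:phd}). Throughout I write $\alpha_{\min}=\min_{1\le j\le N}\alpha_j$, and I treat the moment bound as valid for every real $q\ge q_0$. The first step is purely Markov: for any such $q$, applying Markov's inequality to $\abs{X}^q$ and then the hypothesis gives
\[
\P\croch{\abs{X}>t}=\P\croch{\abs{X}^q>t^q}\leq \frac{\E\croch{\abs{X}^q}}{t^q}\leq C\paren{\frac{\sum_{i=1}^N\lambda_i q^{\alpha_i}}{t}}^q .
\]

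For Ineq.~\eqref{ineq.moment.exponential.deviation} I would next linearise the sum in the numerator. Set $q_1=\min_{j}\paren{t/(N\lambda_j)}^{1/\alpha_j}$, so that $\lambda_j q_1^{\alpha_j}\le t/N$ for every $j$. For any $q\le q_1$ one has $q/q_1\le1$ and $\alpha_j\ge\alpha_{\min}$, hence $(q/q_1)^{\alpha_j}\le(q/q_1)^{\alpha_{\min}}$; writing $\lambda_j q^{\alpha_j}=\lambda_j q_1^{\alpha_j}(q/q_1)^{\alpha_j}\le (t/N)(q/q_1)^{\alpha_{\min}}$ and summing over the $N$ indices yields $\sum_i\lambda_i q^{\alpha_i}\le t\,(q/q_1)^{\alpha_{\min}}$. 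Plugging this back gives, for every $q_0\le q\le q_1$,
\[
\P\croch{\abs{X}>t}\leq C\paren{q/q_1}^{\alpha_{\min}q}=C\exp\paren{\alpha_{\min}\,q\ln(q/q_1)}.
\]
Since $u\mapsto u\ln u$ is minimised at $u=1/e$, the unconstrained optimum is $q=q_1/e$, with value $\exp(-\alpha_{\min}q_1/e)$, which is exactly the exponent in \eqref{ineq.moment.exponential.deviation}. When $q_1/e\ge q_0$ this $q$ is admissible and the factor $e^{q_0\alpha_{\min}}\ge1$ only weakens the bound; when $q_1/e<q_0$ the constrained minimum sits at $q=q_0$, and an elementary monotonicity check ($h(q_1)=q_0\ln(q_0/q_1)-q_0+q_1/e$ is negative on $[q_0,eq_0)$) shows $(q_0/q_1)^{\alpha_{\min}q_0}\le e^{q_0\alpha_{\min}}e^{-\alpha_{\min}q_1/e}$, so the prefactor $e^{q_0\alpha_{\min}}$ precisely absorbs the $q\ge q_0$ restriction.

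For Ineq.~\eqref{ineq.moment.exponential.error} I would argue directly rather than inverting \eqref{ineq.moment.exponential.deviation}. Fix $x>0$, take $t=\sum_{i=1}^N\lambda_i\paren{ex/\alpha_{\min}}^{\alpha_i}$ and choose $q=x/\alpha_{\min}$ (replacing it by $q_0$ if it is smaller). The Markov ratio then reads $\sum_i\lambda_i(x/\alpha_{\min})^{\alpha_i}\big/\sum_i\lambda_i e^{\alpha_i}(x/\alpha_{\min})^{\alpha_i}$, and since $e^{\alpha_i}\ge e^{\alpha_{\min}}$ termwise, this ratio is at most $e^{-\alpha_{\min}}$; hence $\P\croch{\abs{X}>t}\le C\paren{e^{-\alpha_{\min}}}^{q}=Ce^{-x}\le Ce^{q_0\alpha_{\min}}e^{-x}$, and the regime $x/\alpha_{\min}<q_0$ is again covered by the $e^{q_0\alpha_{\min}}$ factor exactly as above.

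I expect the main obstacle to be the bookkeeping of the $q\ge q_0$ constraint: one must check that the single prefactor $e^{q_0\alpha_{\min}}$ uniformly dominates the loss incurred when the free optimiser $q_1/e$ (respectively $x/\alpha_{\min}$) falls below $q_0$, which is where the monotonicity computation on $h$ is needed. The other delicate point, though routine, is the termwise comparison $(q/q_1)^{\alpha_j}\le(q/q_1)^{\alpha_{\min}}$, which is valid precisely because $q\le q_1$ forces the base into $(0,1]$; losing the factor there is what produces the clean single-rate exponent in $q_1$.
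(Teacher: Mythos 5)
Your proof is correct and follows essentially the same route as the paper's: Markov's inequality applied to $\abs{X}^q$, the reduction $\sum_i \lambda_i q^{\alpha_i} \le t\,(q/q_1)^{\min_j \alpha_j}$ for $q\le q_1 = \min_j \paren{t/(N\lambda_j)}^{1/\alpha_j}$ (the paper's bound $\sum_i \le N\max_i$ evaluated at $\tilde q = q_1/e$), and, for the second inequality, the identical choice $q^* = x/\min_j\alpha_j$ and $t^* = \sum_i\lambda_i\paren{ex/\min_j\alpha_j}^{\alpha_i}$ with the same termwise comparison $e^{\alpha_i}\ge e^{\min_j\alpha_j}$. Your treatment of the constraint $q\ge q_0$ via the function $h$ is actually more explicit than the paper's one-line indicator argument; the only sub-case your analysis leaves open is $q_1 < q_0$ (small $t$), where no admissible $q$ satisfies $q\le q_1$, but there the claimed bound follows from the trivial estimate $\P\croch{\abs{X}>t}\le 1 \le C e^{q_0\min_j\alpha_j} e^{-e^{-1}(\min_j\alpha_j)q_1}$ — precisely the role played by the paper's term $\1_{\tilde q < q_0}$, and, like the paper, this implicitly uses $C\ge 1$.
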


\medskip

\begin{proof}[Proof of Proposition~\ref{prop.moment.exponential}]

By use of Markov's inequality applied to $\abs{X}^q$ ($q>0$), it comes for every $t>0$
\begin{align*}
    \P\croch{  \abs{X} >t } \leq  \1_{q \geq q_0} \frac{ \E\croch{ \abs{X}^q }}{t^q} +  \1_{q < q_0} \leq \1_{q \geq q_0} C \paren{ \frac{ \sum_{i=1}^N \lambda_i q^{\alpha_i} }{t} }^q +  \1_{q < q_0} .
\end{align*}
Now using the upper bound $\sum_{i=1}^N \lambda_i q^{\alpha_i} \leq N \max_i \acc{\lambda_i q^{\alpha_i}}$ and choosing the particular value $\tilde q = \tilde q(t)= e^{-1} \min_{j} \acc{ \paren{\frac{t}{N \lambda_j}}^{\frac{1}{\alpha_j}} }$, one gets
\begin{align*}
    \P\croch{  \abs{X} >t } & \leq \1_{ \tilde q \geq q_0} C \paren{ \frac{ \max_i \acc{N\lambda_i \paren{e^{-\alpha_i} \min_{j} \acc{ \paren{\frac{t}{N \lambda_j}}^{\frac{1}{\alpha_j}} }}^{\alpha_i}} }{t} }^{\tilde q} +  \1_{ \tilde q < q_0} \\
&  \leq \1_{\tilde q \geq q_0} C e^{ -(\min_i\alpha_i)   \croch{ e^{-1} \min_{j} \acc{ \paren{\frac{t}{N \lambda_j}}^{\frac{1}{\alpha_j}} } } } +  \1_{\tilde q < q_0} ,
\end{align*}
which provides \eqref{ineq.moment.exponential.deviation}.

Let us now turn to the proof of \eqref{ineq.moment.exponential.error}.
From $t^* = \sum_{i=1}^N\lambda_i \paren{ \frac{e x }{\min_j \alpha_j}}^{\alpha_i}$ combined with $q^* = \frac{x}{\min_j \alpha_j}$, it arises for every $x>0$
\begin{align*}
\frac{ \sum_{i=1}^N \lambda_i (q^*)^{\alpha_i} }{t^*} = \frac{ \sum_{i=1}^N \lambda_i \paren{ e^{-1}\frac{ex}{\min_j \alpha_j} }^{\alpha_i} }{ \sum_{i=1}^N\lambda_i \paren{ \frac{e x }{\min_j \alpha_j}}^{\alpha_i} } \leq \paren{ \max_k e^{-\alpha_k} }  \frac{ \sum_{i=1}^N \lambda_i \paren{\frac{ex}{\min_j \alpha_j} }^{\alpha_i} }{ \sum_{i=1}^N\lambda_i \paren{ \frac{e x }{\min_j \alpha_j}}^{\alpha_i} } =  e^{-\min_k \alpha_k} .
\end{align*}
Then,
\begin{align*}
C \paren{ \frac{ \sum_{i=1}^N \lambda_i (q^*)^{\alpha_i} }{t^*} }^{q^*} \leq C e^{- \paren{\min_k \alpha_k} \frac{x}{\min_j \alpha_j} } = C e^{-x} .
\end{align*}
Hence,
\begin{align*}
  \P\croch{ \abs{X} > \sum_{i=1}^N \lambda_i \paren{ \frac{e x }{\min_j \alpha_j} }^{\alpha_i} } & \leq C e^{-x} \1_{q^*\geq q_0} + \1_{ q^*< q_0 }  \leq C e^{q_0 \min_j \alpha_j}  \cdot e^{-x} ,
\end{align*}
since $ e^{q_0 \min_j \alpha_j} \geq 1 $ and $ -x + q_0 \min_j \alpha_j \geq 0 $ if $q<q_0$.

\end{proof}

\subsubsection{Sub-Gaussian random variables}

\begin{lem}[Theorem~2.1 in \cite{BouLugMas_2013} first part] \label{lem.subgaussian.exp.to.moment}
Any centered random variable $X$ such that $\prob{ X > t } \vee \prob{ -X > t } \leq e^{- t^2/(2\nu)}$ satisfies
\begin{align*}
  \esp{ X^{2q} } \leq q! \paren{ 4 \nu }^q.
\end{align*}
for all  $q$ in $\mathbb{N}_+$.
\end{lem}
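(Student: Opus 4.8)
The plan is to express the even moment $\esp{X^{2q}}$ through the tail function of $\abs{X}$ and then integrate the resulting expression against the sub-Gaussian tail bound supplied by the hypothesis. Since $2q$ is even we have $\esp{X^{2q}} = \esp{\abs{X}^{2q}}$, and the layer-cake (tail integration) identity gives
\begin{align*}
\esp{X^{2q}} = \int_0^{+\infty} 2q\, t^{2q-1}\, \prob{\abs{X} > t}\, dt .
\end{align*}
The whole argument then reduces to controlling $\prob{\abs{X}>t}$ and evaluating a single Gaussian-type integral.

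First I would bound the two-sided tail by a union bound, $\prob{\abs{X}>t} \leq \prob{X>t} + \prob{-X>t} \leq 2 e^{-t^2/(2\nu)}$, which is exactly the assumption since each term is at most $e^{-t^2/(2\nu)}$. Plugging this into the integral yields
\begin{align*}
\esp{X^{2q}} \leq 4q \int_0^{+\infty} t^{2q-1} e^{-t^2/(2\nu)}\, dt .
\end{align*}
The substitution $u = t^2/(2\nu)$, for which $t\,dt = \nu\,du$ and $t^{2q-2} = (2\nu u)^{q-1}$, turns the remaining integral into a Gamma integral:
\begin{align*}
\int_0^{+\infty} t^{2q-1} e^{-t^2/(2\nu)}\, dt = 2^{q-1}\nu^q \int_0^{+\infty} u^{q-1} e^{-u}\, du = 2^{q-1}\nu^q\, \Gamma(q) = 2^{q-1}\nu^q (q-1)! ,
\end{align*}
using $\Gamma(q)=(q-1)!$ for $q\in\mathbb{N}_+$.

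Combining the two displays gives $\esp{X^{2q}} \leq 4q \cdot 2^{q-1}(q-1)!\,\nu^q = 2^{q+1} q!\, \nu^q$, and since $2^{q+1} \leq 4^q$ for every $q\geq 1$ this is bounded by $q!\,(4\nu)^q$, which is the claim; in fact the computation shows the stated constant is not tight. None of the steps is genuinely delicate: the only points requiring minor care are the union bound producing the factor $2$, the even-power identity $\esp{X^{2q}}=\esp{\abs{X}^{2q}}$, and the change of variables identifying the Gamma function. Thus the main (and modest) obstacle is simply to carry out the Gamma-integral evaluation cleanly, while everything else is bookkeeping.
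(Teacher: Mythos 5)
Your proof is correct. The paper does not prove this lemma itself (it simply cites Theorem~2.1 of the Boucheron--Lugosi--Massart reference), and your tail-integration argument is exactly the standard proof of that result: the layer-cake formula, the union bound giving the factor $2$, and the Gamma-integral evaluation are all carried out correctly, yielding $\esp{X^{2q}} \leq 2^{q+1} q!\,\nu^q$, which is indeed bounded by $q!\,(4\nu)^q$ since $2^{q+1}\leq 4^q$ for $q\geq 1$ (with equality at $q=1$).
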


\begin{lem}[Theorem~2.1 in \cite{BouLugMas_2013} second part] \label{lem.subgaussian.moment.to.exp}
Any centered random variable $X$ such that
\begin{align*}
  \esp{ X^{2q} } \leq q! C^q.
\end{align*}
for some $C>0$ and $q$ in $\mathbb{N}_+$ satisfies
 $\prob{ X > t } \vee \prob{ -X > t } \leq e^{- t^2/(2\nu)}$ with $\nu = 4C$.
\end{lem}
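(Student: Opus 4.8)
The plan is to route through the moment generating function (MGF) of $X$ and then apply a Chernoff bound; the only real difficulty is that the hypothesis controls the \emph{even} moments $\esp{X^{2q}}$, whereas a naive expansion of $\esp{e^{\lambda X}}$ involves all moments. I would remove the odd moments by a symmetrization step. Introducing an independent copy $X'$ of $X$, Jensen's inequality together with $\esp{X'}=0$ gives $\esp{e^{-\lambda X'}}\ge e^{-\lambda\esp{X'}}=1$, so that for every $\lambda$,
\[
\esp{e^{\lambda X}} = \esp{e^{\lambda X}}\,\esp{e^{-\lambda X'}} \ge \esp{e^{\lambda X}} \quad\text{and in fact}\quad \esp{e^{\lambda X}}\le \esp{e^{\lambda(X-X')}} .
\]
Since $X-X'$ is symmetric, its odd moments vanish and its MGF reduces to a series in even moments only, $\esp{e^{\lambda(X-X')}}=\sum_{q\ge0}\frac{\lambda^{2q}}{(2q)!}\esp{(X-X')^{2q}}$.

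Second, I would control the symmetrized moments by the hypothesis. Convexity yields $(X-X')^{2q}\le 2^{2q-1}(|X|^{2q}+|X'|^{2q})$, hence $\esp{(X-X')^{2q}}\le 2^{2q}\esp{X^{2q}}\le 2^{2q}q!\,C^q$. Inserting this bound and using the elementary factorial inequality $(2q)!\ge 2^{q}(q!)^2$ (equivalently $q!/(2q)!\le 1/(2^q q!)$, which follows from $\binom{2q}{q}\ge 2^q$), the series collapses to an exponential:
\[
\esp{e^{\lambda X}}\le \sum_{q\ge0}\frac{\lambda^{2q}\,2^{2q}q!\,C^q}{(2q)!}\le \sum_{q\ge0}\frac{(2\lambda^2 C)^q}{q!}=e^{2C\lambda^2}.
\]
This is exactly the sub-Gaussian MGF bound $\log\esp{e^{\lambda X}}\le \nu\lambda^2/2$ with $\nu=4C$.

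Finally, I would conclude with the standard Chernoff optimization. For $t>0$, $\prob{X>t}\le e^{-\lambda t}\esp{e^{\lambda X}}\le e^{2C\lambda^2-\lambda t}$, and minimizing over $\lambda>0$ at $\lambda=t/(4C)$ gives $\prob{X>t}\le e^{-t^2/(8C)}=e^{-t^2/(2\nu)}$. Because the even moments of $-X$ coincide with those of $X$, the same argument applied to $-X$ produces the matching bound for $\prob{-X>t}$, which is the claimed two-sided sub-Gaussian tail with $\nu=4C$.

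The main obstacle is the first, symmetrization step. If one instead expands $\esp{e^{\lambda X}}$ directly, the odd moments $\esp{X^{2q+1}}$ are not controlled by the hypothesis and must be handled by Cauchy--Schwarz, which both worsens the numerical constant and, more seriously, only delivers a two-sided bound of the form $2\,e^{-ct^2}$ carrying a stray factor $2$ that cannot be absorbed into the exponent for small $t$. Symmetrizing first is precisely what kills the odd moments cleanly, so that the factorial inequality $(2q)!\ge 2^q(q!)^2$ converts the even-moment hypothesis into the sharp sub-Gaussian MGF and yields exactly the variance factor $\nu=4C$ of the statement; everything after that is routine series summation and a one-line optimization.
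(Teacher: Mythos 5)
Your argument is correct, and it is worth noting at the outset that the paper itself gives no proof of this lemma: it is imported verbatim as the second half of Theorem~2.1 of Boucheron, Lugosi and Massart, so there is no in-paper derivation to compare against. Your symmetrization route is a legitimate self-contained proof that lands exactly on the stated variance factor $\nu=4C$: the independent copy kills the odd moments, convexity costs $\esp{(X-X')^{2q}}\leq 2^{2q}q!\,C^q$, the inequality $(2q)!\geq 2^q(q!)^2$ (from $\binom{2q}{q}\geq 2^q$) collapses the series to $\esp{e^{\lambda X}}\leq e^{2C\lambda^2}$, and Chernoff at $\lambda=t/(4C)$ gives $e^{-t^2/(8C)}=e^{-t^2/(2\nu)}$; applying the same to $-X$ is valid since $-X$ is centered with the same even moments. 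Two small repairs. First, your displayed chain ``$\esp{e^{\lambda X}} = \esp{e^{\lambda X}}\,\esp{e^{-\lambda X'}} \ge \esp{e^{\lambda X}}$'' is garbled as written; what you mean, and what you actually use, is $\esp{e^{\lambda(X-X')}}=\esp{e^{\lambda X}}\,\esp{e^{-\lambda X'}}\geq \esp{e^{\lambda X}}$ because Jensen gives $\esp{e^{-\lambda X'}}\geq e^{-\lambda\esp{X'}}=1$. Second, the term-by-term identity $\esp{e^{\lambda(X-X')}}=\sum_{q\geq 0}\frac{\lambda^{2q}}{(2q)!}\esp{(X-X')^{2q}}$ deserves one justifying line: write $e^{u}=\cosh u+\sinh u$, note the $\cosh$ series has nonnegative terms so Tonelli applies, and $\esp{\sinh(\lambda(X-X'))}=0$ by symmetry once integrability is granted by the $\cosh$ bound. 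A standard alternative that avoids the independent copy is to use Jensen in the form $\esp{e^{-\lambda X}}\geq 1$ to get $\esp{e^{\lambda X}}\leq 2\esp{\cosh(\lambda X)}-1$, which isolates the even moments directly and in fact yields the slightly better exponent $e^{C\lambda^2}$; your version trades that marginal gain for a cleaner probabilistic step, and both match the lemma as stated.
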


\subsubsection{The Efron-Stein inequality}

\begin{thm}[Efron-Stein's inequality \cite{BouLugMas_2013}, Theorem~3.1]
\label{thm.efron.stein}
Let $X_1,\ldots,X_n$ be independent random variables and let $Z = f\paren{X_1,\ldots,X_n}$ be a square-integrable function.
Then
\begin{align*}
  \Var(Z) \leq \sum_{i=1}^n \esp{ \paren{Z - \esp{ Z \mid (X_j)_{j\neq i}} }^2 } = \nu.
\end{align*}
Moreover if $X_1^{\prime},\ldots,X_n^{\prime}$ denote independent copies of $X_1,\ldots,X_n$ and if we define for every $1\leq i\leq n$
\begin{align*}
  Z_i^{\prime} = f\paren{X_1,\ldots,X_i^{\prime},\ldots,X_n},
\end{align*}
then
\begin{align*}
  \nu = \frac{1}{2} \sum_{i=1}^n \esp{ \paren{Z - Z_i^{\prime}}^2 }.
\end{align*}
\end{thm}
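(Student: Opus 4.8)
The plan is to prove both parts by the Doob martingale (Efron--Stein--Steele) decomposition of $Z-\esp{Z}$ along the coordinates, and then to recover the symmetric expression for $\nu$ from the elementary variance identity for independent copies. Throughout I write $\espi{i}{\cdot}$ for the conditional expectation $\esp{\cdot\mid X_1,\ldots,X_i}$, with the convention $\espi{0}{\cdot}=\esp{\cdot}$.

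First I would set up the martingale increments $\Delta_i=\espi{i}{Z}-\espi{i-1}{Z}$, so that $Z-\esp{Z}=\sum_{i=1}^n\Delta_i$ telescopes. Since $\espi{i-1}{\Delta_i}=0$ and $\Delta_i$ is measurable with respect to $(X_1,\ldots,X_{i-1})$ whenever $i<j$, the increments are pairwise orthogonal in $L^2$, giving $\Var(Z)=\sum_{i=1}^n\esp{\Delta_i^2}$. This telescoping-plus-orthogonality identity is the backbone of the argument and is where the independence of the $X_i$ will ultimately be exploited.

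The key step is to bound each $\esp{\Delta_i^2}$ by the corresponding conditional-variance term. Let $Z^{(i)}=\esp{Z\mid (X_j)_{j\neq i}}$ be the random variable obtained by integrating out only the $i$-th coordinate. Using independence, conditioning on $X_1,\ldots,X_{i-1}$ also integrates out $X_i$, so $\espi{i-1}{Z}=\espi{i-1}{Z^{(i)}}$; and since $Z^{(i)}$ does not depend on $X_i$, this equals $\espi{i}{Z^{(i)}}$. Hence $\Delta_i=\espi{i}{Z-Z^{(i)}}$, and conditional Jensen gives $\Delta_i^2\leq\espi{i}{(Z-Z^{(i)})^2}$. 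Taking expectations yields $\esp{\Delta_i^2}\leq\esp{(Z-Z^{(i)})^2}$, and summing over $i$ produces exactly the right-hand side $\nu$. I expect this identification of $\Delta_i$ as a conditional expectation of $Z-Z^{(i)}$, legitimized by independence, to be the only genuinely delicate point.

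Finally, for the symmetric form of $\nu$, I would argue conditionally on $(X_j)_{j\neq i}$: given those variables, $Z$ and $Z_i'$ are two independent copies of the same random function of the $i$-th coordinate, so the identity $\esp{(W-W')^2}=2\Var(W)$ for i.i.d.\ copies applies and gives $\esp{(Z-Z_i')^2\mid (X_j)_{j\neq i}}=2\,\esp{(Z-Z^{(i)})^2\mid (X_j)_{j\neq i}}$. Taking total expectations and summing over $i$ converts the first expression for $\nu$ into $\tfrac12\sum_{i=1}^n\esp{(Z-Z_i')^2}$, which completes the proof.
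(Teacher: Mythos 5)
Your proof is correct. Note that the paper does not actually prove this statement --- it is quoted verbatim from Boucheron--Lugosi--Massart (Theorem~3.1 there) and used as a black box --- so there is no internal proof to compare against; your argument (Doob martingale decomposition, orthogonality of increments, the identification $\Delta_i=\mathbb{E}\bigl[Z-Z^{(i)}\,\big|\,X_1,\ldots,X_i\bigr]$ via independence, conditional Jensen, and the identity $\mathbb{E}[(W-W')^2]=2\operatorname{Var}(W)$ applied conditionally on $(X_j)_{j\neq i}$) is exactly the standard proof and is sound. The only blemish is a garbled index in the orthogonality step: you want that for $i<j$ the increment $\Delta_i$ is $\sigma(X_1,\ldots,X_{j-1})$-measurable, so that $\mathbb{E}[\Delta_i\Delta_j]=\mathbb{E}\bigl[\Delta_i\,\mathbb{E}[\Delta_j\mid X_1,\ldots,X_{j-1}]\bigr]=0$; as written (``$\Delta_i$ is measurable with respect to $(X_1,\ldots,X_{i-1})$ whenever $i<j$'') the statement is not what you mean, though the intended argument is clear.
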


\subsubsection{Generalized Efron-Stein's inequality}\label{subsec.generalized.efron.stein}


\begin{thm}[Theorem~15.5 in \cite{BouLugMas_2013}]
Let $X_1,\ldots,X_n$ $n$ independent random variables, $f : \R^n\rightarrow\R$ a measurable function, and define $\zeta=f(X_1,\ldots,X_n)$ and $\zeta_i'=f(X_1,\ldots,X_i',\ldots,X_n)$, with $X'_1,\ldots,X'_n$  independent copies of $X_i$.
Furthermore let $V_+~=~\esp{  \sum_{i} ^n \gc{\paren{\zeta-\zeta_i'}_+}^2 \mid X_1^n}$ and
 $\zeta_-~=~\esp{  \sum_{i} ^n \gc{\paren{Z-Z_i'}_-}^2 \mid X_1^n}$.
Then there exists a constant $\kappa\leq 1,271$ such that for all $q$ in $[2,+\infty[$,
\begin{align*}
	\norm{\paren{\zeta-\E \zeta}_+}_q \leq \sqrt{2\kappa q\norm{V_+}_{q/2}}\enspace,\qquad \mbox{and}\qquad 	\norm{\paren{\zeta-\E \zeta}_-}_q \leq \sqrt{2\kappa q\norm{V_-}_{q/2}}\enspace.
\end{align*}
\end{thm}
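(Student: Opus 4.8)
The plan is to establish only the upper-tail bound $\norm{(Z-\E Z)_+}_q \leq \sqrt{2\kappa q\norm{V_+}_{q/2}}$; the lower-tail bound then follows for free by applying this inequality to $-Z$. Indeed, the resampled versions of $-Z$ are $-Z_i'$, so that $\gp{(-Z)-(-Z_i')}_+ = (Z_i'-Z)_+ = (Z-Z_i')_-$, which forces the $V_+$ functional of $-Z$ to coincide with the $V_-$ functional of $Z$; likewise $\gp{(-Z)-\E(-Z)}_+ = (Z-\E Z)_-$. Hence proving the $+$ statement for an arbitrary $f$ automatically yields the $-$ statement. It therefore suffices to treat a single one-sided inequality, which I would do through the $\Phi$-entropy (entropy method) of \cite{Bou_Bou_Lug_Mas:2005}.

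The backbone of the argument is the sub-additivity (tensorization) property of $\Phi$-entropy: for a suitable convex $\Phi$ and a nonnegative $Y=g(X_1,\ldots,X_n)$,
\begin{align*}
  H_\Phi(Y) := \E\croch{\Phi(Y)} - \Phi\gp{\E Y} \leq \sum_{i=1}^n \E\croch{ H_\Phi^{(i)}(Y) },
\end{align*}
where $H_\Phi^{(i)}$ is the conditional $\Phi$-entropy given $(X_j)_{j\neq i}$. Specializing $\Phi$ to the power functions $\Phi(x)=x^{\theta}$ with $\theta\in(1,2]$ turns a statement about $\norm{(Z-\E Z)_+}_q$ into a sum of local conditional contributions, each of which can be analyzed one coordinate at a time.

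The key technical step is a pointwise (one-dimensional) estimate for each conditional term. Exploiting that $Z_i'$ is an independent copy of the missing coordinate, I would bound the conditional $\Phi$-entropy $H_\Phi^{(i)}$ by a local fluctuation term of order $(Z-Z_i')_+^2$ weighted by an appropriate power of $Z$; after summation over $i$ and one application of Hölder's inequality (which is precisely what produces the $\norm{\cdot}_{q/2}$ norm), the weight separates out and the squared increments $(Z-Z_i')_+^2$ assemble into the functional $V_+$. Combined with tensorization, this yields a recursion whose effect is that $\norm{(Z-\E Z)_+}_q^2$ increases by at most a constant multiple of $\norm{V_+}_{q/2}$ as $q$ grows by a fixed amount. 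The base case $q=2$ is exactly the Efron--Stein bound of Theorem~\ref{thm.efron.stein} together with the exchangeability identity $\E\croch{V_+}=\Var(Z)$ (so that $\norm{(Z-\E Z)_+}_2^2\leq\Var(Z)=\norm{V_+}_1\leq\norm{V_+}_{q/2}$ for $q\geq 2$). Telescoping the recursion from $q=2$ then makes the squared norm grow linearly in $q$, which is the announced $\sqrt{2\kappa q\norm{V_+}_{q/2}}$ rate.

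The main obstacle is the sharp pointwise inequality: one must control the conditional $\Phi$-entropy by the conditional Efron--Stein quantity with the \emph{optimal} numerical constant, since this is exactly where the value $\kappa\leq 1.271$ is generated. This reduces to optimizing an explicit real-variable function governing the best constant in the one-dimensional $\Phi$-entropy estimate, and it is the delicate part of the proof; by contrast the tensorization step and the subsequent telescoping of the recursion are essentially mechanical once the constant has been secured. For the complete derivation and the verification that $\kappa\leq 1.271$, I would refer to Chapter~15 of \cite{BouLugMas_2013} and to \cite{Bou_Bou_Lug_Mas:2005}.
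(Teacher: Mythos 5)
The first thing to say is that the paper itself contains no proof of this statement: it is imported verbatim as Theorem~15.5 of \cite{BouLugMas_2013} (the generalized Efron--Stein moment inequality of Boucheron, Bousquet, Lugosi and Massart), and the only thing actually proved in the appendix is the corollary derived from it. So there is no in-paper argument to compare yours against line by line. Judged on its own, your sketch correctly identifies the route taken in the cited source: the reduction of the lower-tail inequality to the upper-tail one by replacing $Z$ with $-Z$ (which indeed turns $(Z-Z_i')_-$ into the positive part for $-Z$ and hence swaps the roles of $V_+$ and $V_-$) is exactly right, and the overall architecture --- tensorization of $\Phi$-entropy for $\Phi(x)=x^{\theta}$, a one-dimensional conditional estimate, a recursion in $q$ seeded by the Efron--Stein inequality, and telescoping so that $\norm{(Z-\E Z)_+}_q^2$ grows linearly in $q$ --- is the actual proof scheme of \cite{Bou_Bou_Lug_Mas:2005} and Chapter~15 of \cite{BouLugMas_2013}.

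That said, as a proof your proposal is not self-contained: the two steps carrying all the content, namely the sharp one-dimensional inequality and the verification that the resulting constant is at most $1.271$, are precisely the ones you defer to the reference, so in the end your argument rests on the same citation the paper makes. Two smaller points deserve correction. First, in the cited source the value of $\kappa$ is not generated by the pointwise $\Phi$-entropy estimate alone; it emerges from optimizing the moment recursion itself, as the supremum over $q\geq 2$ of the constant forced by the induction step, whose limit as $q\to\infty$ gives $\kappa=\sqrt{e}/\paren{2(\sqrt{e}-1)}\approx 1.2708$. Second, your base case invokes the ``identity'' $\E\croch{V_+}=\Var(Z)$; exchangeability only gives $\E\croch{(Z-Z_i')_+^2}=\tfrac12\E\croch{(Z-Z_i')^2}$, so in fact $\Var(Z)\leq \E\croch{V_+}=\norm{V_+}_1\leq\norm{V_+}_{q/2}$ by Efron--Stein and Jensen. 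The inequalities all point the right way, so the base case survives, but it is an inequality chain rather than an identity. With these caveats your text is an accurate \emph{outline}; to stand as a proof it would need the one-dimensional lemma and the induction written out, or else it should simply be presented, as the paper does, as a quoted external theorem.
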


\begin{cor}\label{cor.generalized.Efron.Stein.appendix}
With the same notation, it comes
\begin{align}
	\norm{ \zeta-\E\zeta }_q & \leq \sqrt{2\kappa q} \sqrt{\norm{\sum_{i=1}^n \paren{\zeta-\zeta_i^{\prime}}^2}_{q/2}}  \leq \sqrt{4\kappa q } \sqrt{\norm{\sum_{i=1}^n \paren{\zeta-\esp{\zeta \mid (X_j)_{j\neq i}}}^2}_{q/2}} \label{ineg.moment.integrated.obs} \enspace.
\end{align}
Moreover considering $ \zeta^{(j)} = f( X_1, \ldots, X_{j-1}, X_{j+1}, \ldots, X_n) $ for every $1\leq j\leq n$, it results
\begin{eqnarray}
\norm{ \zeta-\E \zeta }_q &  \leq 2\sqrt{2\kappa q } \sqrt{ \norm{\sum_{i=1}^n \paren{\zeta-\zeta^{(j)} }^2}_{q/2}} \label{ineg.moment.removed.obs} \enspace.
\end{eqnarray}
\end{cor}

%

\subsubsection{McDiarmid's inequality}
\label{subsec.mc.diarmid}

\begin{thm}\label{thm.bounded.differences.ineq}
Let $X_1,...,X_n$ be independent random variables taking values in a set $A$, and assume that $f: A^n \rightarrow \mathbb{R}$ satisfies
\begin{eqnarray*}
\underset{x_1,...,x_n,x_i'}{\sup}\left|f(x_1,...,x_i,...,x_n) - f(x_1,...,x_i',...,x_n) \right| \leq c_i, \ 1\leq i \leq n \ .
\end{eqnarray*}
Then for all $\varepsilon>0$, one has
\begin{eqnarray*}
\prob{f(X_1,...,X_n) - E\gc{f(X_1,...,X_n)} \geq \varepsilon} &\leq& e^{-2\varepsilon^2/\sum_{i=1}^{n}c_i^2} \\
\prob{E\gc{f(X_1,...,X_n)} - f(X_1,...,X_n) \geq \varepsilon} &\leq& e^{-2\varepsilon^2/\sum_{i=1}^{n}c_i^2}
\end{eqnarray*}
A proof can be found in \cite{DeGyLu_1996} (see Theorem~9.2).
\end{thm}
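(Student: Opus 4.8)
The statement is the classical bounded-differences (McDiarmid) inequality, and the plan is to prove it by the Doob-martingale method, establishing the upper tail first and deducing the lower tail by symmetry. Write $\mathcal{F}_i = \sigma(X_1,\ldots,X_i)$ for $0\le i\le n$ and introduce the Doob martingale $V_i = \esp{f(X_1,\ldots,X_n)\mid \mathcal{F}_i}$, so that $V_0 = \esp{f(X_1,\ldots,X_n)}$ while $V_n = f(X_1,\ldots,X_n)$. Setting $D_i = V_i - V_{i-1}$ produces the telescoping decomposition $f(X_1,\ldots,X_n) - \esp{f(X_1,\ldots,X_n)} = \sum_{i=1}^n D_i$ into martingale increments satisfying $\esp{D_i\mid\mathcal{F}_{i-1}}=0$.

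The crucial step, which I expect to be the main obstacle, is to bound the conditional oscillation of each increment. Because the $X_j$ are independent, $V_i$ is obtained by integrating $f$ over $X_{i+1},\ldots,X_n$, so $D_i$ is a measurable function of $X_1,\ldots,X_i$ whose dependence on the fresh coordinate $X_i$ is the only source of variation conditionally on $\mathcal{F}_{i-1}$. Replacing $X_i$ by an independent copy inside the two conditional expectations defining $D_i$ and applying the hypothesis $\sup|f(\ldots,x_i,\ldots)-f(\ldots,x_i',\ldots)|\le c_i$ uniformly, one shows that, conditionally on $\mathcal{F}_{i-1}$, the increment $D_i$ lies in an $\mathcal{F}_{i-1}$-measurable interval $\gc{L_i,U_i}$ of length $U_i - L_i \le c_i$. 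The delicate point is precisely this uniform control: the range of $D_i$ in $X_i$ must be dominated by $c_i$ no matter what values the earlier coordinates take.

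With the conditional boundedness in hand, I would combine the Chernoff bound with the classical Hoeffding lemma. For any $\lambda>0$, $\prob{f(X_1,\ldots,X_n)-\esp{f(X_1,\ldots,X_n)}\ge\varepsilon}\le e^{-\lambda\varepsilon}\,\esp{\exp\gp{\lambda\sum_{i=1}^n D_i}}$. Conditioning on $\mathcal{F}_{n-1}$ and using $\esp{e^{\lambda D_n}\mid\mathcal{F}_{n-1}}\le \exp\gp{\lambda^2 c_n^2/8}$ (Hoeffding's lemma for a centered variable of range at most $c_n$), then peeling off $D_{n-1},\ldots,D_1$ in turn by the tower property, yields $\esp{\exp\gp{\lambda\sum_{i=1}^n D_i}}\le \exp\gp{\lambda^2\sum_{i=1}^n c_i^2/8}$. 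Thus $\prob{f(X_1,\ldots,X_n)-\esp{f(X_1,\ldots,X_n)}\ge\varepsilon}\le \exp\gp{-\lambda\varepsilon+\lambda^2\sum_{i=1}^n c_i^2/8}$, and choosing $\lambda = 4\varepsilon/\sum_{i=1}^n c_i^2$ gives the announced bound $\exp\gp{-2\varepsilon^2/\sum_{i=1}^n c_i^2}$. Finally, since $-f$ obeys the bounded-differences condition with the same constants $c_i$, applying the upper-tail result to $-f$ delivers the symmetric lower-tail inequality, completing the proof.
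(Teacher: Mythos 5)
Your proof is correct: the Doob martingale decomposition, the conditional bound $L_i \leq D_i \leq U_i$ with $U_i - L_i \leq c_i$ (which requires the independence of the $X_j$ exactly as you note), the conditional Hoeffding lemma giving $\esp{e^{\lambda D_i}\mid \mathcal{F}_{i-1}} \leq e^{\lambda^2 c_i^2/8}$, the Chernoff optimization at $\lambda = 4\varepsilon/\sum_i c_i^2$, and the symmetry argument for the lower tail are all sound. The paper does not prove this statement itself but defers to Theorem~9.2 of \cite{DeGyLu_1996}, whose proof is precisely this classical martingale-plus-Hoeffding argument, so your proposal matches the referenced proof.
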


\subsubsection{Rosenthal's inequality}

\begin{prop}[Eq.~(20) in \cite{IbragShar2002}]
\label{prop.rosenthal.inequality}
Let $X_1,\ldots,X_n$ denote independent real random variables with symmetric distributions.
Then for every $q>2$ and $\gamma>0$,
\begin{align*}
  E\croch{ \abs{\sum_{i=1}^n X_i}^q } \leq B(q,\gamma) \acc{ \gamma \sum_{i=1}^n E\croch{\abs{X_i}^q} \vee \paren{ \sqrt{ \sum_{i=1}^n E\croch{X_i^2} } }^q  } , \hfill
\end{align*}
where $a\vee b = \max(a,b)$ ($a,b\in \R$), and $B(q,\gamma)$ denotes a positive constant only depending on $q$ and $\gamma$.
Furthermore, the optimal value of $B(q,\gamma)$ is given by
$$\begin{array}{rcll}
  B^*(q,\gamma)  & = &\ 1+ \frac{E\croch{\abs{ N}^q} }{\gamma}&,\ \mbox{if}\quad 2< q \leq 4, \\
& = &\ \gamma^{-q/(q-1)} E\croch{ \abs{ Z - Z^\prime }^q } &,\ \mbox{if}\quad 4< q,
\end{array}
$$
where $N$ denotes a standard Gaussian variable, and $Z,Z^\prime$ are \iid random variables with Poisson distribution $\mathcal{P}\paren{ \frac{\gamma^{1/(q-1)}}{2} }$.

\end{prop}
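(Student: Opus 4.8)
The statement is the sharp form of Rosenthal's inequality for sums of independent \emph{symmetric} random variables, so the plan is to separate two questions: first establishing the inequality with \emph{some} admissible constant, and then pinning down the \emph{optimal} value $B^*(q,\gamma)$, which is where the case split at $q=4$ originates.

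For the inequality itself I would exploit symmetry to write $S_n=\sum_i X_i$ with the same law as $\sum_i \varepsilon_i|X_i|$, where the $\varepsilon_i$ are independent Rademacher signs independent of the $|X_i|$. Conditioning on the $|X_i|$ reduces the problem to a weighted Rademacher sum, to which a Khintchine-type estimate applies. Combined with a truncation at a level $\tau$ — bounding the contribution of $\{|X_i|\le\tau\}$ through $\sum_i E X_i^2$ by a Bennett/Bernstein argument and the contribution of $\{|X_i|>\tau\}$ crudely through $\sum_i E|X_i|^q$ — and then optimizing $\tau$, one obtains a bound of exactly the stated shape $B\,(\gamma\sum_i E|X_i|^q \vee (\sum_i E X_i^2)^{q/2})$, with the weight $\gamma$ recording how $\tau$ trades the two regimes. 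This yields the $\vee$-structure but only a non-explicit constant.

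The real content is the value of $B^*(q,\gamma)$, and for that I would attack the extremal problem head-on: maximize $E|S_n|^q$ over all $n$ and all independent symmetric summands subject to $\sum_i E X_i^2$ and $\sum_i E|X_i|^q$ being held fixed. Since both constraints and the objective are additive over the summands and unchanged under refining a summand into independent pieces, the supremum is approached by triangular arrays whose limits are the \emph{infinitely divisible} symmetric laws; concretely the extremal $S$ has the form $\sigma N + J$, where $N$ is standard Gaussian and $J$ is a symmetric compound-Poisson term carrying the $q$-th-moment budget (for $q>2$ the Gaussian part feeds $\sum_i E X_i^2$ but contributes nothing to $\sum_i E|X_i|^q$). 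After using scale invariance this collapses to a low-dimensional optimization over the Gaussian variance and a single Poisson intensity $\theta$, i.e. over laws of the form $\sigma N + a(Z-Z')$ with $Z,Z'$ independent Poisson$(\theta/2)$.

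The last step is the convexity/monotonicity analysis of this reduced problem, and this is exactly where the threshold $q=4$ appears. For $2<q\le 4$ the maximizer sits at the diffusive end (essentially pure Gaussian variance with a vanishing jump), producing $B^*(q,\gamma)=1+E|N|^q/\gamma$; for $q>4$ it sits at a genuinely discrete interior configuration, a symmetric Poisson with intensity tuned to $\theta=\gamma^{1/(q-1)}$, producing $B^*(q,\gamma)=\gamma^{-q/(q-1)}E|Z-Z'|^q$. The hard part will be this extremality argument in two pieces: justifying, by a limiting/compactness step, that arbitrary symmetric summands cannot beat infinitely divisible laws, and then carrying out the variational computation that locates the optimizer and explains why $x\mapsto|x|^q$ lying below versus above quartic growth flips which extremizer wins. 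The truncation bound of the second paragraph is only order-sharp, so it gives no shortcut to this final computation.
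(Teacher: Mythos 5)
The paper does not prove this proposition: it is imported verbatim as Eq.~(20) of \cite{IbragShar2002}, and the only things the paper itself establishes about it are the downstream upper bounds on $B^*(q,\gamma)$ in Proposition~\ref{prop.rosenthal.inequality.upper.bound} and Lemma~\ref{lem.rosenthal.ineq.optimal.constant.upper.bound}. So there is no in-paper argument to compare yours against; the relevant comparison is with the cited source. Your outline does track the structure of the actual proof in that literature: the sharp constant is obtained by solving the extremal problem of maximizing $E|S_n|^q$ under fixed $\sum_i E[X_i^2]$ and $\sum_i E|X_i|^q$, the extremizers are limits of accompanying infinitely divisible laws of the form $\sigma N + a(Z-Z')$, and the threshold $q=4$ separates the Gaussian-dominated regime from the Poisson-dominated one. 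Your observation that in the triangular-array limit the Gaussian component exhausts the variance budget while contributing nothing to the $q$-th-moment budget is also correct.

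As a proof, however, the proposal has genuine gaps precisely at the steps that carry all the content. First, the reduction to infinitely divisible extremals is asserted rather than proved: additivity of the constraints and of the objective under refinement of summands does not by itself show that the supremum over arbitrary independent symmetric summands is attained on the infinitely divisible closure; this requires a genuine accompanying-laws/compactness argument, and it is also where the symmetry hypothesis does real work. Second, the variational computation over $(\sigma,a,\theta)$ that produces the exact values $1+E|N|^q/\gamma$ and $\gamma^{-q/(q-1)}E|Z-Z'|^q$, together with the convexity analysis that flips the optimizer at $q=4$, is named but not carried out; ``below versus above quartic growth'' is a heuristic, not an argument. Finally, your first paragraph (symmetrization, Khintchine, truncation) is dispensable: it yields only a non-sharp constant and feeds nothing into the sharp analysis, as you yourself concede. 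If this proposition were to be proved rather than cited, essentially all of the work in your second and third paragraphs would remain to be done.
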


\medskip

\begin{prop}
\label{prop.rosenthal.inequality.upper.bound}
Let $X_1,\ldots,X_n$ denote independent real random variables with symmetric distributions.
Then for every $q>2$,
\begin{align*}
    E\croch{ \abs{\sum_{i=1}^n X_i}^q } \leq \paren{ 2\sqrt{2e} }^q  \max\acc{ q^{q} \sum_{i=1}^n E\croch{\abs{X_i}^q} , \paren{\sqrt{q} }^q  \paren{ \sqrt{ \sum_{i=1}^n E\croch{X_i^2} } }^q  } .
\end{align*}

\end{prop}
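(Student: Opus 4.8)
The plan is to deduce the inequality from the sharp form of Rosenthal's inequality in Proposition~\ref{prop.rosenthal.inequality} by making an explicit, $q$-dependent choice of the free parameter $\gamma$ and then controlling the optimal constant $B^*(q,\gamma)$. Concretely, I would take $\gamma = q^{q/2}$. With this choice the two terms produced by Proposition~\ref{prop.rosenthal.inequality} align perfectly with the two terms of the target bound: writing $A=\sum_i\mathbb{E}|X_i|^q$ and $C=(\sqrt{\sum_i\mathbb{E}X_i^2})^q$, one has $B^*\{\gamma A\vee C\}=(B^*\gamma A)\vee(B^*C)$, and since $\gamma=q^{q/2}$ turns the requirement $B^*\gamma\le(2\sqrt{2e})^q q^q$ (the coefficient of $A$) into exactly the same requirement $B^*\le(2\sqrt{2e})^q q^{q/2}$ as the coefficient of $C$, the whole statement reduces to the single scalar inequality
\begin{equation*}
B^*(q,q^{q/2})\le(2\sqrt{2e})^q\,q^{q/2},\qquad q>2 .
\end{equation*}
I would then verify this in the two regimes in which Proposition~\ref{prop.rosenthal.inequality} describes $B^*$.

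On the range $2<q\le4$ one has $B^*(q,\gamma)=1+\mathbb{E}|N|^q/\gamma$ with $N$ standard Gaussian, and everything is bounded by absolute constants. Using $\mathbb{E}|N|^q=2^{q/2}\Gamma(\tfrac{q+1}{2})/\sqrt\pi$ together with Stirling's bound for $\Gamma$, one checks directly that $\mathbb{E}|N|^q\le q^{q/2}$ here (the ratio $\mathbb{E}|N|^q/q^{q/2}$ equals $1/2$ at $q=2$ and $3/16$ at $q=4$, staying well below $1$ throughout), hence $B^*\le2$. Since the right-hand side $(2\sqrt{2e})^q q^{q/2}$ is bounded below by its limit $16e$ as $q\downarrow2$, the reduced inequality holds with a wide margin.

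The substantial case is $q>4$, where $B^*(q,\gamma)=\gamma^{-q/(q-1)}\,\mathbb{E}|Z-Z'|^q$ with $Z,Z'$ i.i.d.\ Poisson$(\lambda)$ and $\lambda=\gamma^{1/(q-1)}/2$. The key observation is that the chosen $\gamma$ makes $\gamma^{q/(q-1)}=(2\lambda)^q$, so that $B^*=(2\lambda)^{-q}\,\mathbb{E}|Z-Z'|^q$; moreover $2\lambda=q^{q/(2(q-1))}=\sqrt q\,q^{1/(2(q-1))}$ satisfies $\sqrt q\le2\lambda\le\sqrt{eq}$, the upper bound being equivalent to $\ln q\le q-1$. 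Thus $\lambda\asymp\sqrt q$, and the reduced inequality becomes the moment estimate $\|Z-Z'\|_q\le4\sqrt{2e}\,\lambda\sqrt q$. To establish it I would first use the triangle inequality in $L^q$, $\|Z-Z'\|_q\le\|Z-\lambda\|_q+\|Z'-\lambda\|_q=2\|Z-\lambda\|_q$, reducing matters (via the lower bound $\lambda\ge\sqrt q/2$) to $\|Z-\lambda\|_q\le\sqrt{2e}\,q$. I would then bound the centred Poisson moment $\|Z-\lambda\|_q$ by a Bennett/sub-gamma moment inequality (the centred Poisson being sub-gamma with variance factor $\lambda$ and bounded scale, \cite{BouLugMas_2013}). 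Because here $\lambda\le\sqrt{eq}/2$, the ``variance'' contribution $\sqrt{\lambda q}\asymp q^{3/4}$ is negligible against the scale contribution of order $q$, so $\|Z-\lambda\|_q$ is at most a constant times $q$, which yields the required bound.

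I expect the main obstacle to be exactly this last step: producing a sub-gamma/Poisson moment bound with explicit enough constants that, after inserting the pinned value $\lambda=q^{q/(2(q-1))}/2$, the resulting constant is no larger than $2\sqrt{2e}$ uniformly in $q>4$. Fortunately the estimates are far from tight — at the endpoint $q=4$ one can compute $\|Z-Z'\|_4$ directly from the Skellam cumulants ($\mathbb{E}(Z-Z')^4=2\lambda+12\lambda^2$) and see an order-of-magnitude margin, while for large $q$ the centred Poisson moment grows only like $q/\log q$, so the inequality is very slack there. The care therefore lies not in any sharp estimate but in selecting a clean, citable moment bound and in bookkeeping the constants through the substitution $\gamma=q^{q/2}$.
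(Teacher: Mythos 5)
Your overall strategy coincides with the paper's: both start from Proposition~\ref{prop.rosenthal.inequality}, tune the free parameter $\gamma$ as a power of $q$, and bound the optimal constant $B^*(q,\gamma)$ separately on $2<q\le 4$ (via Gaussian moments) and on $q>4$ (via moments of a difference of i.i.d.\ Poisson variables). Your choice $\gamma=q^{q/2}$, which makes the two constraints collapse into the single inequality $B^*(q,q^{q/2})\le(2\sqrt{2e})^q q^{q/2}$, is a clean variant of the paper's choices ($\gamma=1$ for $2<q\le4$, $\gamma=q^{(q-1)/2}$ for $q>4$), and your treatment of the Gaussian regime is correct as written.

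The one step that is not actually established is the Poisson regime. After the (correct) reduction to $\|Z-\lambda\|_q\le\sqrt{2e}\,q$ with $\lambda=\tfrac12 q^{q/(2(q-1))}\in[\tfrac12\sqrt q,\tfrac12\sqrt{eq}]$, you appeal to an unspecified ``Bennett/sub-gamma moment inequality.'' That is precisely the point where the constants must be checked: the standard citable version (Theorem~2.3 in \cite{BouLugMas_2013}) is one-sided and stated for even integer moments, in the form $\mathbb{E}[(X)_+^{2q}]\le q!(8v)^q+(2q)!(4c)^{2q}$, and after converting it to an $L^q$ bound for general real $q>4$ and accounting for both tails, the coefficient of $q$ one obtains is not obviously below $\sqrt{2e}\approx 2.33$; the margin exists but is thin, so ``a constant times $q$'' does not suffice --- the constant is the whole issue. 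The gap is easily closed, and essentially by the device the paper itself uses in Lemma~\ref{lem.diff.poisson}: bound $\|Z-\lambda\|_q\le\lambda+\|Z\|_q$ and invoke the Touchard-polynomial estimate $\mathbb{E}[Z^q]\le\sum_{i}\binom{q}{i}\lambda^i q^{q-i}=(\lambda+q)^q$, which gives $\|Z-Z'\|_q\le 2(2\lambda+q)\le(2+\sqrt{e})\,q<2\sqrt{2e}\,q\le 4\sqrt{2e}\,\lambda\sqrt{q}$ for $q>4$. The paper instead conditions on $N=Z+Z'$ (so that $Z$ given $N$ is binomial $\mathcal{B}(N,1/2)$), bounds centred binomial moments by Chernoff (Lemma~\ref{lem.binom.moment.upper.bound}), and only then applies the Touchard bound to $\mathbb{E}[N^{q/2}]$; this yields the sharper Lemma~\ref{lem.diff.poisson} and a comfortable margin, at the price of two auxiliary lemmas. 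In short: right plan, correct reduction, but the final Poisson moment estimate must be made explicit rather than cited generically.
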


\begin{proof}[Proof of Proposition~\ref{prop.rosenthal.inequality.upper.bound}]

From Lemma~\ref{lem.rosenthal.ineq.optimal.constant.upper.bound}, let us observe
\begin{itemize}
  \item if $2<q\leq 4$, choosing $\gamma=1$ provides
\begin{align*}
  B^*(q,\gamma) & \leq \paren{2 \sqrt{2e} \sqrt{q} }^{q} .
\end{align*}

  \item if $4<q$, $\gamma= q^{(q-1)/2}$ leads to
\begin{align*}
  B^*(q,\gamma) & \leq  q^{-q/2}  \paren{ \sqrt{ 4e q \paren{ q^{1/2} + q} } }^q \leq  q^{-q/2}  \paren{ \sqrt{8e} q }^q = \paren{ 2\sqrt{2e} \sqrt{q} }^q .
\end{align*}
\end{itemize}
Plugging the previous upper bounds in Rosenthal's inequality (Proposition~\ref{prop.rosenthal.inequality}), it results for every $q>2$
\begin{align*}
  E\croch{ \abs{\sum_{i=1}^n X_i}^q } \leq \paren{ 2\sqrt{2e} \sqrt{q} }^q \max\acc{ \paren{\sqrt{q} }^{q} \sum_{i=1}^n E\croch{\abs{X_i}^q} , \paren{ \sqrt{ \sum_{i=1}^n E\croch{X_i^2} } }^q  } .
\end{align*}

\end{proof}

\medskip

\begin{lem}\label{lem.rosenthal.ineq.optimal.constant.upper.bound}
With the same notation as Proposition~\ref{prop.rosenthal.inequality} and for every $\gamma>0$, it comes
\begin{itemize}
  \item for every $2<q\leq 4$,
\begin{align*}
  B^*(q,\gamma) & \leq 1+ \frac{  \paren{ \sqrt{2e} \sqrt{q} }^{q} }{\gamma} ,
\end{align*}

  \item for every $4<q$,
\begin{align*}
  B^*(q,\gamma) & \leq  \gamma^{-q/(q-1)}  \paren{ \sqrt{ 4e q \paren{ \gamma^{1/(q-1)} + q} } }^q .
\end{align*}

\end{itemize}

\end{lem}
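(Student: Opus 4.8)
The plan is to bound the two branches of $B^*(q,\gamma)$ from Proposition~\ref{prop.rosenthal.inequality} separately, reducing each to a single absolute-moment estimate obtained by integrating a tail bound. The common engine is the following observation: for a symmetric random variable $X$ whose log-moment generating function satisfies $\log\E\croch{e^{tX}}\leq \nu t^2/\gc{2(1-ct)}$ on $(0,1/c)$, the sub-gamma tail bound of \cite{BouLugMas_2013} gives $\P\croch{\abs{X}>\sqrt{2\nu u}+cu}\leq 2e^{-u}$ for all $u>0$. Writing $\E\croch{\abs{X}^q}=\int_0^\infty q r^{q-1}\P\croch{\abs{X}>r}\,dr$ and using that $r=\sqrt{2\nu u}+cu$ forces $u\geq\min\paren{r^2/(8\nu),\,r/(2c)}$, together with $e^{-\min(a,b)}\leq e^{-a}+e^{-b}$, splits the integral into the two Gamma integrals $\int_0^\infty r^{q-1}e^{-r^2/(8\nu)}\,dr=\tfrac12(8\nu)^{q/2}\Gamma(q/2)$ and $\int_0^\infty r^{q-1}e^{-r/(2c)}\,dr=(2c)^q\Gamma(q)$. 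Bounding $q\Gamma(q/2)=2\Gamma(q/2+1)$ and $q\Gamma(q)=\Gamma(q+1)$ through $\Gamma(x+1)\leq x^x$ for $x\geq1$ then yields the clean estimate $\E\croch{\abs{X}^q}\leq 2(4\nu q)^{q/2}+2(2cq)^q$.

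For $2<q\leq4$ I would apply this to the standard Gaussian $N$, for which the sharp tail $\P\croch{\abs{N}>r}\leq 2e^{-r^2/2}$ is available directly, so that no scale term is needed. The same computation, now with exponent $r^2/2$, gives $\E\croch{\abs{N}^q}\leq q\,2^{q/2}\Gamma(q/2)=2^{q/2}\,2\Gamma(q/2+1)\leq 2\,q^{q/2}$. Since $q\geq2$ implies $(2e)^{q/2}\geq 2e>2$, this is at most $(2e)^{q/2}q^{q/2}=\paren{\sqrt{2e}\sqrt q}^q$. Inserting this into $B^*(q,\gamma)=1+\E\croch{\abs{N}^q}/\gamma$ produces the first claimed bound.

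For $q>4$ I would take $X=Z-Z'$ with $Z,Z'$ independent $\mathrm{Poisson}\paren{\lambda}$ and $\lambda=\gamma^{1/(q-1)}/2$. Its log-MGF equals $2\lambda(\cosh t-1)$, and the elementary inequality $\cosh t-1\leq (t^2/2)/(1-t)$ on $(0,1)$ shows that $X$ is sub-gamma with variance factor $\nu=2\lambda=\gamma^{1/(q-1)}$ (indeed $\nu=\Var(Z-Z')$) and scale $c=1$. The moment estimate above then reads $\E\croch{\abs{Z-Z'}^q}\leq 2(4\nu q)^{q/2}+2(2q)^q\leq 4\gc{(4\nu q)^{q/2}\vee(2q)^q}$, and it remains to dominate this by $\paren{4eq(\nu+q)}^{q/2}$. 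Using $(\nu+q)^{q/2}\geq\max(\nu,q)^{q/2}$, the target already exceeds $\max\paren{(4e\nu q)^{q/2},\,(2\sqrt e\,q)^q}$, while for $q>4$ the slack $e^{q/2}\geq e^2>4$ absorbs both the prefactor $4$ and the passage from the bases $4\nu q,\,2q$ to $4e\nu q,\,2\sqrt e\,q$. Multiplying through by $\gamma^{-q/(q-1)}$ delivers the second bound.

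The delicate point is precisely this final termwise comparison for $q>4$: one must verify $4(4\nu q)^{q/2}\leq(4e\nu q)^{q/2}$ and $4(2q)^q\leq(2\sqrt e\,q)^q$ uniformly in $\nu>0$, both of which reduce to $4\leq e^{q/2}$ and hence to $q>4$. This is where the deliberately generous constant $\sqrt{4e}$ in the statement is spent, and it is also why the Gaussian case must be treated with the sharp tail $e^{-r^2/2}$ rather than the generic sub-gamma tail, which would be too lossy for $q$ near $2$.
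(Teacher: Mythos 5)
Your proof is correct, but it follows a genuinely different route from the paper's. The paper bounds the two moments $\E\croch{\abs{N}^q}$ and $\E\croch{\abs{Z-Z'}^q}$ by direct computation: for the Gaussian it evaluates the moment exactly (splitting into even and odd integer $q$) and then applies Stirling bounds (Lemma~\ref{lem.moment.gaussian}); for the Poisson difference it conditions on $N=Z+Z'$ so that $Z\mid N$ is binomial, bounds the binomial central moments via a Chernoff tail integration (Lemma~\ref{lem.binom.moment.upper.bound}), and controls $\E\croch{N^{q/2}}$ through Jensen's inequality and Touchard polynomials (Lemma~\ref{lem.diff.poisson}). You instead run a single tail-integration engine in both cases, using the sharp Gaussian tail $\P\croch{\abs{N}>r}\leq 2e^{-r^2/2}$ for the first branch and, for the second, the sub-gamma bound obtained from the explicit log-moment generating function $2\lambda(\cosh t-1)\leq \lambda t^2/(1-t)$ of the centered Poisson difference, which gives $\E\croch{\abs{X}^q}\leq 2(4\nu q)^{q/2}+2(2cq)^q$ with $(\nu,c)=(\gamma^{1/(q-1)},1)$; the final termwise comparison against $\paren{4eq(\nu+q)}^{q/2}$ reduces to $4\leq e^{q/2}$, which indeed holds in the regime $q>4$ where that branch is invoked (and the analogous step $2\leq(2e)^{q/2}$ in the Gaussian branch holds for all $q>2$). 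What your approach buys is uniformity and validity for all real $q$: the paper's Lemmas~\ref{lem.moment.gaussian}, \ref{lem.binom.moment.upper.bound} and~\ref{lem.diff.poisson} are stated for all $q$ but proved only by even/odd integer case distinctions, whereas your Gamma-function computation $\int_0^\infty r^{q-1}e^{-r^2/(8\nu)}\,dr=\tfrac12(8\nu)^{q/2}\Gamma(q/2)$ together with $\Gamma(x+1)\leq x^x$ for $x\geq 1$ needs no such restriction and dispenses with the binomial/Touchard detour entirely. What the paper's route buys is the standalone intermediate moment bounds, which it reuses elsewhere, and slightly more explicit constants at each stage; both arguments land on exactly the same final inequalities.
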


\begin{proof}[Proof of Lemma~\ref{lem.rosenthal.ineq.optimal.constant.upper.bound}]

If $2<q\leq 4$,
\begin{align*}
  B^*(q,\gamma)  & =  1+ \frac{E\croch{\abs{ N}^q} }{\gamma} \leq 1+ \frac{ \sqrt{2} e \sqrt{q} \paren{ \frac{q}{e} }^{\frac{q}{2}} }{\gamma} \leq 1+ \frac{ \sqrt{2e}^q \sqrt{e}^q \paren{ \frac{q}{e} }^{\frac{q}{2}} }{\gamma}  = 1+ \frac{  \paren{ \sqrt{2e} \sqrt{q} }^{q} }{\gamma} \enspace ,
\end{align*}
by use of Lemma~\ref{lem.moment.gaussian} and $\sqrt{q}^{1/q} \leq \sqrt{e}$ for every $q>2$.

If $q>4$,
\begin{align*}
  B^*(q,\gamma)  & = \gamma^{-q/(q-1)} E\croch{ \abs{ Z - Z^\prime }^q } \\
& \leq \gamma^{-q/(q-1)} 2^{q/2+1} e \sqrt{q} \croch{\frac{q}{e} \paren{ \gamma^{1/(q-1)} + q} }^{q/2} \\
& \leq \gamma^{-q/(q-1)} 2^{q/2} \sqrt{2e}^q \sqrt{e}^q \croch{\frac{q}{e} \paren{ \gamma^{1/(q-1)} + q} }^{q/2} \\
& \leq \gamma^{-q/(q-1)}  \croch{ 4e q \paren{ \gamma^{1/(q-1)} + q} }^{q/2} =  \gamma^{-q/(q-1)}  \paren{ \sqrt{ 4e q \paren{ \gamma^{1/(q-1)} + q} } }^q ,
\end{align*}
applying Lemma~\ref{lem.diff.poisson} with $\lambda = 1/2 \gamma^{1/(q-1)}$.

\end{proof}

\subsection{Technical lemmas}

%

\subsubsection{Basic computations for resampling applied to the $k$NN algorithm}

\begin{lem}\label{BasicResultForResamplingKnn}
  For every $1\leq i \leq n$ and $1\leq p\leq n$, one has
\begin{align} \label{eq.sum.all.points}
\probi{e}{i\in \bar{e}}  & = \frac{p}{n}  ,\\
 \sum_{j=1}^n \P_e\croch{ i\in \eb,\ j\in V_k^e(X_i) } & = \frac{kp}{n} \enspace , \\
  \sum_{ k< \sigma_i(j)\leq k+p} \P_e\croch{ i\in \eb,\ j\in V_k^e(X_i) } & = \frac{kp}{n} \frac{p-1}{n-1} \enspace\cdot \label{eq.sum.faraway.neighbors.points}
\end{align}

\end{lem}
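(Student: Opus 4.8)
All three identities are purely combinatorial, and I would prove them in the order stated, obtaining the third from the second by decomposing according to neighbour rank. Throughout I use that, on the event $\{i\in\eb\}$, the set $V_k^e(X_i)$ has \emph{exactly} $k$ elements, which is legitimate because $n-p\geq k$ under the standing assumption $p+k\leq n$ (and distances are assumed to have no ties, so each rank $\sigma_i(j)$ is well defined).

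First, for \eqref{eq.sum.all.points} I would argue by symmetry. Since $\abs{\eb}=p$ is deterministic and all indices play symmetric roles under the uniform draw of $e$ over $\mathcal{E}_{n-p}$, summing over $i$ gives $p=\espi{e}{\abs{\eb}}=\sum_{i=1}^n\probi{e}{i\in\eb}=n\,\probi{e}{i\in\eb}$, whence $\probi{e}{i\in\eb}=p/n$; a direct count $\binom{n-1}{p-1}/\binom{n}{p}$ gives the same. The second identity is then immediate: on $\{i\in\eb\}$ one has $\sum_{j=1}^n\1_{\{j\in V_k^e(X_i)\}}=\abs{V_k^e(X_i)}=k$, so
\[
\sum_{j=1}^n\P_e\croch{i\in\eb,\ j\in V_k^e(X_i)}=\espi{e}{\1_{\{i\in\eb\}}\sum_{j=1}^n\1_{\{j\in V_k^e(X_i)\}}}=k\,\probi{e}{i\in\eb}=\frac{kp}{n}.
\]

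The heart of the argument is \eqref{eq.sum.faraway.neighbors.points}, which I would obtain by splitting the sum just computed according to the rank $\sigma_i(j)$ of $X_j$ among the neighbours of $X_i$ in the whole sample, into the three ranges $\sigma_i(j)\leq k$, $k<\sigma_i(j)\leq k+p$, and $\sigma_i(j)>k+p$. The governing remark is that, given $i\in\eb$ and $j\in e$, membership $j\in V_k^e(X_i)$ holds iff at most $k-1$ of the $\sigma_i(j)-1$ points closer to $X_i$ than $X_j$ belong to the training set $e$. Two elementary facts follow. If $\sigma_i(j)\leq k$, then there are at most $k-1$ closer points in total, so the condition is automatic and each such term equals $\P_e\croch{i\in\eb,\ j\in e}=\binom{n-2}{n-p-1}/\binom{n}{n-p}=\frac{p}{n}\frac{n-p}{n-1}$; there are exactly $k$ such indices, contributing $\frac{kp}{n}\frac{n-p}{n-1}$. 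If instead $\sigma_i(j)>k+p$, then on $\{i\in\eb,\ j\in e\}$ at most $p-1$ of the closer points can avoid $e$ (since $i$ already occupies one of the $p$ test slots), forcing at least $(\sigma_i(j)-1)-(p-1)\geq k+1$ of them into $e$, so $j\notin V_k^e(X_i)$ and the term vanishes.

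Subtracting these two evaluated blocks from the total $\frac{kp}{n}$ of the second identity leaves exactly the middle range:
\[
\sum_{k<\sigma_i(j)\leq k+p}\P_e\croch{i\in\eb,\ j\in V_k^e(X_i)}=\frac{kp}{n}-\frac{kp}{n}\frac{n-p}{n-1}=\frac{kp}{n}\frac{p-1}{n-1},
\]
which is \eqref{eq.sum.faraway.neighbors.points}. I expect the only delicate step to be the counting in the far range: one must argue correctly that because the test set has size $p$ and already contains $i$, no more than $p-1$ of the strictly closer neighbours can lie outside $e$, which is what makes that block vanish; the $\sigma_i(j)\leq k$ block and the final subtraction are then routine.
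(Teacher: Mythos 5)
Your proof is correct and follows essentially the same route as the paper: establish $\probi{e}{i\in\eb}=p/n$, use $\abs{V_k^e(X_i)}=k$ on $\{i\in\eb\}$ for the second identity, and obtain the third by evaluating the block $\sigma_i(j)\leq k$ (each term equal to $\P_e\croch{i\in\eb,\ j\in e}=\frac{p}{n}\frac{n-p}{n-1}$) and subtracting it from the total $\frac{kp}{n}$. In fact you are slightly more careful than the paper, which silently uses that the terms with $\sigma_i(j)>k+p$ vanish and contains a typographical inversion ($\frac{n-1}{n-p}$ instead of $\frac{n-p}{n-1}$) in the intermediate step; your explicit counting argument for the far range fills that gap.
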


\begin{proof}[Proof of Lemma~\ref{BasicResultForResamplingKnn}]
The first equality is straightforward.
The second one results from simple calculations as follows.
\begin{align*}
    \sum_{j=1}^n \P_e\croch{ i\in \eb,\ j\in V_k^e(X_i) } & = \sum_{j=1}^n  {n\choose p}^{-1} \sum_{e} \1_{i\in\eb} \1_{j\in V_k^e(X_i)}  =  {n\choose p}^{-1} \sum_{e} \1_{i\in\eb}  \paren{  \sum_{j=1}^n \1_{j\in V_k^e(X_i)} }\\
& = \paren{ {n\choose p}^{-1} \sum_{e} \1_{i\in\eb} } k  = \frac{p}{n}\, k \enspace.
\end{align*}

For the last equality, let us notice every $j \in V_i$ satisfies
\begin{align*}
  \P_e\croch{ i\in \eb,\ j\in V_k^e(X_i) } = \P_e\croch{ j\in V_k^e(X_i) \mid i\in \eb}  \P_e\croch{ i\in \eb} =  \frac{n-1}{n-p} \frac{p}{n} \enspace,
\end{align*}
hence
\begin{align*}
  \sum_{ k< \sigma_i(j)\leq k+p} \P_e\croch{ i\in \eb,\ j\in V_k^e(X_i) } & = \sum_{ j=1}^n \P_e\croch{ i\in \eb,\ j\in V_k^e(X_i) } - \sum_{ \sigma_i(j)\leq k }\P_e\croch{ i\in \eb,\ j\in V_k^e(X_i) } \\
& = k \frac{p}{n} - k \frac{n-1}{n-p} \frac{p}{n} = k \frac{p}{n} \frac{p-1}{n-1} \enspace\cdot
\end{align*}
\end{proof}

\subsubsection{Stone's lemma}
\begin{lem}[\cite{DeGyLu_1996}, Corollary~11.1, p.~171]
\label{Stone}
Given $n$ points $(x_1,...,x_n)$ in $\mathbb{R}^d$, any of these points belongs to the $k$ nearest neighbors of at most $k\gamma_d$ of the other points, where $\gamma_d$ increases on $d$.
\end{lem}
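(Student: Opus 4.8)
The plan is to use the classical cone-covering argument. First I would fix one of the points $x=x_i$ and partition $\mathbb{R}^d\setminus\{x\}$ into a finite family of cones $C_1,\dots,C_{\gamma_d}$ with common apex $x$, each cone having small angular aperture in the sense that any two nonzero vectors $u,v$ pointing into the same cone satisfy $\angle(u,v)<\pi/3$. The existence of such a covering with a number $\gamma_d$ of cones depending only on $d$ follows from the compactness of the unit sphere $S^{d-1}$: one covers $S^{d-1}$ by finitely many spherical caps of angular radius less than $\pi/6$ and takes the cones they generate. Crucially $\gamma_d$ depends neither on $n$, nor on $k$, nor on the positions of the points.

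The geometric heart of the argument is the following elementary fact: if $y$ and $z$ lie in the same cone $C_\ell$ and $\|x-y\|\le\|x-z\|$, then $\|y-z\|<\|x-z\|$. I would prove this via the law of cosines: since the angle $\theta=\angle yxz$ satisfies $\theta<\pi/3$ we have $\cos\theta>1/2$, whence
\[
\|y-z\|^2=\|x-y\|^2+\|x-z\|^2-2\|x-y\|\,\|x-z\|\cos\theta<\|x-z\|^2+\|x-y\|\bigl(\|x-y\|-\|x-z\|\bigr)\le\|x-z\|^2,
\]
using $\|x-y\|\le\|x-z\|$ in the last step.

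With this in hand I would conclude as follows. Fix a cone $C_\ell$ and list the sample points falling in $C_\ell$ in order of increasing distance to $x$. Suppose $z$ is such a point having at least $k$ other points of $C_\ell$ strictly closer to $x$ than $z$. By the geometric fact, each of these $k$ points $y$ satisfies $\|y-z\|<\|x-z\|$, so $z$ has at least $k$ sample points nearer to it than $x$ is; hence $x$ cannot belong to the $k$ nearest neighbors of $z$. Therefore only the $k$ points of $C_\ell$ closest to $x$ can possibly have $x$ among their $k$ nearest neighbors. Summing over the $\gamma_d$ cones, at most $k\gamma_d$ points of the sample have $x$ among their $k$ nearest neighbors, which is the claim.

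The main obstacle is bookkeeping rather than depth: one must (i) guarantee that the aperture bound $\pi/3$ is achievable with a dimension-only count $\gamma_d$, which the compactness argument handles, and (ii) deal with ties, that is points equidistant from $x$ or from each other, so that ``the $k$ nearest neighbors'' is well defined. I would resolve (ii) by fixing once and for all a tie-breaking rule (for instance by index order), under which the strict inequality $\|y-z\|<\|x-z\|$ still suffices to exclude $x$ from the neighbor list of $z$; the remainder of the argument is unchanged.
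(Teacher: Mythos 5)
Your proof is correct and is exactly the classical cone-covering argument; the paper does not prove this lemma itself but simply cites Corollary~11.1 of Devroye, Gy\"orfi and Lugosi (1996), whose proof is precisely the one you reconstruct (cones of aperture below $\pi/3$, the law-of-cosines comparison, at most $k$ surviving points per cone). One small point in the bookkeeping: in the final counting step you should exclude $z$ as soon as at least $k$ other points of its cone lie at distance \emph{at most} $\|x-z\|$ from $x$, rather than strictly closer --- your geometric lemma, stated with the non-strict hypothesis $\|x-y\|\le\|x-z\|$ and strict conclusion $\|y-z\|<\|x-z\|$, already covers this, and without it a cluster of points equidistant from $x$ could leave more than $k$ candidates in a single cone.
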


\subsubsection{Stability of the $k$NN classifier when removing $p$ observations}
\begin{lem}[\cite{DeWa79}, Eq.~ (14)]
	\label{Lemma : HOresult}
For every $1\leq k \leq n$, let $\A_k$ denote $k$-NN classification algorithm defined by Eq.~\eqref{def.knn.classifier}, and let $Z_1,\ldots,Z_n$ denote $n$ \iid random variables such that for every $1\leq i\leq n$, $Z_i = (X_i,Y_i) \sim P$.
Then for every $1\leq p \leq n-k$, 
\begin{eqnarray*}
\P\croch{ \A_k(Z_{1,n};X) \neq \A_k(Z_{1,n-p};X) } \leq \frac{4}{\sqrt{2\pi}}\frac{ p \sqrt{k}}{ n} \enspace ,
\end{eqnarray*}
where $Z_{1,i} = \paren{Z_1,\ldots,Z_i}$ for every $1\leq i\leq n$, and $(X,Y) \sim P$ is independent of $Z_{1,n}$.
\end{lem}

\subsubsection{Exponential concentration inequality for the L1O estimator}
\begin{lem}[\cite{DeGyLu_1996}, Theorem~24.4]
	\label{Lemma : UpperBoundL1OEstimator}
For every $1\leq k \leq n$, let $\A_k$ denote $k$-NN classification algorithm defined by Eq.~\eqref{def.knn.classifier}. Let also $\Rh_1(\cdot)$ denote the L1O estimator defined by Eq.~\eqref{def.Lpo.estimator} with $p=1$.
Then for every $\varepsilon>0$,
\begin{eqnarray*}
\prob{\left| \widehat{R}_1(\A_k,Z_{1,n}) - \E\croch{ \widehat{R}_1(\A_k,Z_{1,n})  } \right|> \varepsilon} \leq 2\exp\ga{-n \frac{ \varepsilon^2}{\gamma_d^2 k^2} } .
\end{eqnarray*}
\end{lem}

\subsubsection{Moment upper bounds for the L1O estimator}

\begin{lem}\label{Lemma : MomentUpperBoundL1OEstimator}
	For every $1\leq k \leq n$, let $\A_k$ denote $k$-NN classification algorithm defined by Eq.~\eqref{def.knn.classifier}. Let also $\Rh_1(\cdot)$ denote the L1O estimator defined by Eq.~\eqref{def.Lpo.estimator} with $p=1$.
	Then for every $q\geq 1$, 
\begin{eqnarray}\label{ineq.moment.pair_L1o}
  \esp{ \abs{ \Rh_1\paren{ \A_k,Z_{1,n}} - \E\croch{\Rh_1\paren{ \A_k,Z_{1,n}}}   }^{2q} } \leq q! \paren{ 2 \frac{\paren{k\gamma_d}^2}{n} }^q .
\end{eqnarray}
\end{lem}
The proof is straightforward from the combination of Lemmas~\ref{lem.subgaussian.exp.to.moment} and~\ref{Lemma : UpperBoundL1OEstimator}.

\subsubsection{Upper bound on the optimal constant in the Rosenthal's inequality}

\begin{lem}\label{lem.moment.gaussian}
  Let $N$ denote a real-valued standard Gaussian random variable.
Then for every $q>2$, one has
\begin{align*}
  \E\croch{ \abs{N}^q } & \leq \sqrt{2} e \sqrt{q} \paren{ \frac{q}{e} }^{\frac{q}{2}} .
\end{align*}

\end{lem}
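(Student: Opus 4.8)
The plan is to start from the exact formula for the absolute moments of a standard Gaussian and then control the resulting Gamma value by a non-asymptotic Stirling bound. First I would recall that for every $q>0$,
\[
\E\croch{\abs{N}^q} = \frac{2^{q/2}}{\sqrt{\pi}}\,\Gamma\paren{\frac{q+1}{2}},
\]
which follows by writing $\E\croch{\abs{N}^q} = \tfrac{2}{\sqrt{2\pi}}\int_0^\infty x^q e^{-x^2/2}\,dx$ and performing the substitution $u = x^2/2$, so that the integral becomes $\tfrac{2^{q/2}}{\sqrt{2}}\,\Gamma\paren{\frac{q+1}{2}}$.

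Next I would invoke the classical Stirling upper bound $\Gamma(x)\leq \sqrt{2\pi}\,x^{x-1/2}e^{-x}e^{1/(12x)}$, valid for all $x>0$, applied with $x=(q+1)/2$ (which is positive on the whole range $q>2$). This gives
\[
\E\croch{\abs{N}^q} \leq \frac{2^{q/2}}{\sqrt{\pi}}\sqrt{2\pi}\paren{\frac{q+1}{2}}^{q/2}e^{-(q+1)/2}e^{1/(6(q+1))}
= \sqrt{2}\,(q+1)^{q/2}e^{-(q+1)/2}e^{1/(6(q+1))},
\]
where the key simplification is $2^{q/2}\paren{\frac{q+1}{2}}^{q/2} = (q+1)^{q/2}$.

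The remaining step is elementary bookkeeping. I would factor out $q^{q/2}e^{-q/2}$ and use $(1+1/q)^q\leq e$ to bound $(q+1)^{q/2} = q^{q/2}\paren{1+\tfrac1q}^{q/2}\leq e^{1/2}q^{q/2}$. Collecting the exponential factors (the $e^{1/2}$ just produced cancels the $e^{-1/2}$ coming from $e^{-(q+1)/2}$), the claimed inequality $\E\croch{\abs{N}^q}\leq \sqrt{2}\,e\sqrt{q}\paren{\tfrac{q}{e}}^{q/2}$ reduces to
\[
e^{1/(6(q+1))}\leq e\sqrt{q},
\]
which holds trivially for $q>2$, since the left-hand side is below $e^{1/18}<e\leq e\sqrt{q}$.

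I do not expect a genuine obstacle here; the only points needing care are verifying that the multiplicative constants collapse exactly to the target prefactor $\sqrt{2}\,e\sqrt{q}$, and confirming that the Stirling bound is used in its non-asymptotic form so that the estimate is valid for all $q>2$ rather than merely in the large-$q$ limit.
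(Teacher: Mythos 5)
Your proof is correct, but it follows a genuinely different route from the paper's. The paper computes $\E\croch{\abs{N}^q}$ by splitting into the cases $q$ even and $q$ odd, reducing each to an explicit ratio of factorials ($\sqrt{2/\pi}\, q!/(2^{q/2}(q/2)!)$ in the even case, and a similar expression after the substitution $x=\sqrt{2t}$ in the odd case), and then applies two-sided Stirling bounds for factorials. You instead write the moment in closed form as $\tfrac{2^{q/2}}{\sqrt{\pi}}\Gamma\paren{\tfrac{q+1}{2}}$ and apply the non-asymptotic Stirling bound for the Gamma function directly. Your route buys two things: it treats all real $q>2$ uniformly, whereas the paper's parity argument literally only covers integer $q$ (a point that matters, since the lemma is invoked in Proposition~\ref{prop.rosenthal.inequality.upper.bound} for arbitrary real $q\in(2,4]$); and it actually yields the sharper bound $\sqrt{2}\,e^{1/18}\paren{q/e}^{q/2}$, so the stated factor $e\sqrt{q}$ is pure slack in your derivation. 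The only ingredient you should make sure to cite precisely is the inequality $\Gamma(x)\leq\sqrt{2\pi}\,x^{x-1/2}e^{-x}e^{1/(12x)}$ for all $x>0$, which is the classical Binet form of Stirling's formula and is indeed valid non-asymptotically; with that in hand, your bookkeeping ($2^{q/2}\paren{\tfrac{q+1}{2}}^{q/2}=(q+1)^{q/2}$, then $(1+1/q)^{q/2}\leq e^{1/2}$ cancelling the $e^{-1/2}$) checks out exactly.
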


\begin{proof}[Proof of Lemma~\ref{lem.moment.gaussian}]

If $q$ is even ($q=2k>2$), then
\begin{align*}
  \E\croch{ \abs{N}^q } & = 2\int_{0}^{+\infty} x^q  \frac{1}{\sqrt{2\pi}} e^{-\frac{x^2}{2}} dx = \sqrt{\frac{2}{\pi}} (q-1) \int_{0}^{+\infty} x^{q-2}  e^{-\frac{x^2}{2}} dx \\
& = \sqrt{\frac{2}{\pi}} \frac{(q-1)!}{2^{k-1} (k-1)!} = \sqrt{\frac{2}{\pi}} \frac{q!}{2^{q/2} (q/2)!} \enspace\cdot
\end{align*}
Then using for any positive integer $a$
\begin{align*}
 \sqrt{2\pi a} \paren{ \frac{a}{e} }^a <  a ! < \sqrt{2e\pi a} \paren{ \frac{a}{e} }^a ,
\end{align*}
it results
\begin{align*}
  \frac{q!}{2^{q/2} (q/2)!} < \sqrt{2e}\, e^{-q/2} q^{q/2} ,
\end{align*}
which implies
\begin{align*}
  \E\croch{ \abs{N}^q } & \leq 2 \sqrt{\frac{e}{\pi}} \paren{ \frac{q}{e} }^{q/2} < \sqrt{2} e \sqrt{q} \paren{ \frac{q}{e} }^{\frac{q}{2}} \enspace \cdot
\end{align*}

If $q$ is odd ($q=2k+1>2$), then
\begin{align*}
  \E\croch{ \abs{N}^q } & = \sqrt{\frac{2}{\pi}} \int_{0}^{+\infty} x^q   e^{-\frac{x^2}{2}} dx = \sqrt{\frac{2}{\pi}} \int_{0}^{+\infty} \sqrt{2t}^q   e^{-t} \frac{dt}{\sqrt{2t}} ,
\end{align*}
by setting $x=\sqrt{2t}$.
In particular, this implies
\begin{align*}
  \E\croch{ \abs{N}^q } & \leq \sqrt{\frac{2}{\pi}} \int_{0}^{+\infty} \paren{2t}^k   e^{-t}  dt =
\sqrt{\frac{2}{\pi}} 2^k k! =  \sqrt{\frac{2}{\pi}} 2^{\frac{q-1}{2}} \paren{\frac{q-1}{2}}!  < \sqrt{2} e \sqrt{q} \paren{\frac{q}{e}}^{\frac{q}{2}} .
\end{align*}

\end{proof}

\medskip

\begin{lem}
\label{lem.binom.moment.upper.bound}
Let $S$ denote a binomial random variable such that $S \sim \mathcal{B}(k,1/2)$ ($k\in\N^*$).
Then for every $q>3$, it comes
\begin{align*}
  \E\croch{ \abs{ S-\E\croch{S} }^q } & \leq 4\sqrt{ e } \sqrt{q } \sqrt{\frac{qk}{2e}}^{q} \enspace\cdot
\end{align*}

\end{lem}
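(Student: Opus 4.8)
The plan is to exploit the fact that $S-\esp{S}$ is a sum of $k$ independent, bounded, symmetric random variables, which makes it sub-Gaussian, and then to convert the resulting tail bound into a control of its $q$-th absolute moment. First I would write $S-\esp{S}=\sum_{i=1}^{k}\gp{B_i-1/2}$, where $B_1,\dots,B_k$ are independent Bernoulli$(1/2)$ variables. Each summand is centered, symmetric, and takes values in $[-1/2,1/2]$, so its moment generating function satisfies $\esp{e^{\lambda(B_i-1/2)}}=\cosh(\lambda/2)\leq e^{\lambda^2/8}$. Multiplying over $i$ and optimizing the Chernoff bound in $\lambda$ yields the sub-Gaussian tail
\[
\prob{\abs{S-\esp{S}}>t}\leq 2\,e^{-2t^2/k},\qquad t>0 .
\]

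Next I would turn this tail estimate into a moment bound via the identity $\esp{\abs{S-\esp{S}}^q}=\int_0^{\infty} q\,t^{q-1}\,\prob{\abs{S-\esp{S}}>t}\,dt$. Inserting the tail bound and using the Gaussian-type integral $\int_0^\infty t^{q-1}e^{-a t^2}\,dt=\tfrac12\,a^{-q/2}\Gamma(q/2)$ with $a=2/k$ gives
\[
\esp{\abs{S-\esp{S}}^q}\leq q\,\gp{\tfrac{k}{2}}^{q/2}\Gamma\gp{\tfrac{q}{2}}=2\,\gp{\tfrac{k}{2}}^{q/2}\Gamma\gp{\tfrac{q}{2}+1},
\]
where the last step uses $\Gamma(q/2+1)=(q/2)\Gamma(q/2)$.

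It then remains to control the Gamma factor. Applying the continuous Stirling upper bound $\Gamma(x+1)\leq\sqrt{2\pi x}\,(x/e)^x e^{1/(12x)}$ with $x=q/2$ gives $\Gamma(q/2+1)\leq\sqrt{\pi q}\,\gp{q/(2e)}^{q/2}e^{1/(6q)}$, whence
\[
\esp{\abs{S-\esp{S}}^q}\leq 2\sqrt{\pi q}\,e^{1/(6q)}\gp{\tfrac{kq}{4e}}^{q/2}.
\]
Since $q>3$ one has $e^{1/(6q)}\leq e^{1/18}$ together with $2\sqrt{\pi}\,e^{1/18}\leq 4\sqrt{e}$, while trivially $(kq/(4e))^{q/2}\leq(kq/(2e))^{q/2}$; collecting these factors produces the claimed bound $4\sqrt{e}\sqrt{q}\,\gp{qk/(2e)}^{q/2}$ with a comfortable margin (the argument in fact delivers the sharper $(kq/(4e))^{q/2}$).

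The main obstacle I anticipate is purely technical rather than conceptual: because the moment order $q$ is an arbitrary real number larger than $3$, the integer factorial manipulations used elsewhere (as in the proof of Lemma~\ref{lem.moment.gaussian}) are unavailable and must be replaced by the Stirling estimate for $\Gamma(q/2+1)$ at possibly half-integer arguments, and one must then verify that the loose numerical constants ($2\sqrt{\pi}\,e^{1/18}$ versus $4\sqrt{e}$, and the $2^{-q/2}$ slack) indeed close the gap, which they do.
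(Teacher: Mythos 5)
Your argument is correct, and at the top level it follows the same strategy as the paper's own proof: a Chernoff-type tail bound for the centered binomial, followed by the layer-cake identity $\E\croch{\abs{S-\E S}^q}=q\int_0^\infty t^{q-1}\,\P\paren{\abs{S-\E S}>t}\,dt$. The execution differs in two ways that are worth recording. First, you use the two-sided Hoeffding bound $\P\paren{\abs{S-\E S}>t}\leq 2e^{-2t^2/k}$, whereas the paper works with the weaker one-sided multiplicative Chernoff bound $e^{-u^2/k}$. Second, and more substantively, you evaluate the resulting integral exactly as $2\paren{k/2}^{q/2}\Gamma\paren{q/2+1}$ and control the Gamma factor with the continuous Stirling bound, whereas the paper reduces the integral to the Gaussian-moment computations of Lemma~\ref{lem.moment.gaussian} through a separate treatment of even and odd $q$ — a case analysis that implicitly restricts the argument to integer $q$. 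Your route therefore covers every real $q>3$, as the statement (and its downstream use in Rosenthal's inequality for arbitrary real $q$) actually requires, and it delivers the sharper factor $\paren{qk/(4e)}^{q/2}$ in place of $\paren{qk/(2e)}^{q/2}$; the stated constant then follows with room to spare since $2\sqrt{\pi}\,e^{1/18}\leq 4\sqrt{e}$ and $2^{-q/2}\leq 1$. The only ingredient to verify is the Stirling inequality $\Gamma(x+1)\leq\sqrt{2\pi x}\,(x/e)^x e^{1/(12x)}$ at non-integer $x>0$, which is a standard Robbins-type bound for the Gamma function, so there is no gap.
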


\begin{proof}[Proof of Lemma~\ref{lem.binom.moment.upper.bound}]

Since $S-\E(S)$ is symmetric, it comes
\begin{align*}
  \E\croch{ \abs{ S-\E\croch{S} }^q } & = 2 \int_{0}^{+\infty} \P\croch{  S < \E\croch{S} - t^{1/q}  } \, dt =  2 q \int_{0}^{+\infty} \P\croch{  S < \E\croch{S} - u  } u^{q-1}\, du .
\end{align*}
Using Chernoff's inequality and setting $u = \sqrt{k/2} v$, it results
\begin{align*}
  \E\croch{ \abs{ S-\E\croch{S} }^q } & \leq  2 q \int_{0}^{+\infty}  u^{q-1} e^{-\frac{u^2}{k}} du = 2 q \sqrt{\frac{k}{2}}^{q} \int_{0}^{+\infty}  v^{q-1} e^{-\frac{v^2}{2}} dv .
\end{align*}

If $q$ is even, then $q-1>2$ is odd and the same calculations as in the proof of Lemma~\ref{lem.moment.gaussian} apply, which leads to
\begin{align*}
  \E\croch{ \abs{ S-\E\croch{S} }^q } & \leq  2 \sqrt{\frac{k}{2}}^{q} 2^{q/2} \paren{\frac{q}{2}}! \leq 2 \sqrt{\frac{k}{2}}^{q} 2^{q/2} \sqrt{\pi e q } \paren{\frac{q}{2e}}^{q/2} =    2\sqrt{\pi e } \sqrt{q } \sqrt{\frac{qk}{2e}}^{q}  <  4\sqrt{ e } \sqrt{q } \sqrt{\frac{qk}{2e}}^{q} \enspace\cdot
\end{align*}

If $q$ is odd, then $q-1>2$ is even and another use of the calculations in the proof of Lemma~\ref{lem.moment.gaussian}
provides
\begin{align*}
\E\croch{ \abs{ S-\E\croch{S} }^q } & \leq  2 q \sqrt{\frac{k}{2}}^{q}  \frac{(q-1)!}{ 2^{(q-1)/2}\frac{q-1}{2}!} =  2  \sqrt{\frac{k}{2}}^{q}  \frac{q!}{ 2^{(q-1)/2}\frac{q-1}{2}!} .
\end{align*}
Let us notice
\begin{align*}
  \frac{q!}{ 2^{(q-1)/2}\frac{q-1}{2}!} & \leq \frac{ \sqrt{2\pi e q} \paren{\frac{q}{e}}^{q} }{ 2^{(q-1)/2} \sqrt{\pi (q-1)} \paren{\frac{q-1}{2e}}^{(q-1)/2}  } = \sqrt{2e}\sqrt{\frac{q}{q-1}} \frac{ \paren{\frac{q}{e}}^{q} }{ \paren{\frac{q-1}{e}}^{(q-1)/2}  } \\
& = \sqrt{2e} \sqrt{\frac{q}{q-1}} \paren{\frac{q}{e}}^{(q+1)/2}  \paren{ \frac{ q }{q-1} }^{(q-1)/2}
\end{align*}
and also that
\begin{align*}
\sqrt{\frac{q}{q-1}}   \paren{ \frac{ q }{q-1} }^{(q-1)/2} \leq \sqrt{2e} .
\end{align*}
This implies
\begin{align*}
  \frac{q!}{ 2^{(q-1)/2}\frac{q-1}{2}!} & \leq
2e \paren{\frac{q}{e}}^{(q+1)/2} = 2\sqrt{e} \sqrt{q} \paren{\frac{q}{e}}^{q/2} ,
\end{align*}
hence
\begin{align*}
\E\croch{ \abs{ S-\E\croch{S} }^q } & \leq  2  \sqrt{\frac{k}{2}}^{q}   2\sqrt{e} \sqrt{q} \paren{\frac{q}{e}}^{q/2} =
4 \sqrt{e} \sqrt{q} \sqrt{\frac{qk}{2e}}^{q}   \enspace \cdot
\end{align*}

\end{proof}

\medskip

\begin{lem}
  \label{lem.diff.poisson}
Let $X,Y$ be two \iid random variables with Poisson distribution $\mathcal{P}(\lambda)$ ($\lambda>0$).
Then for every $q>3$, it comes
\begin{align*}
  \E\croch{ \abs{ X - Y}^q } & \leq  2^{q/2+1} e \sqrt{q} \croch{\frac{q}{e} \paren{2\lambda + q} }^{q/2} .
\end{align*}

\end{lem}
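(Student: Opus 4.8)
The plan is to exploit that $W:=X-Y$ is centered and symmetric, with the explicit moment generating function $\E\croch{e^{tW}}=\E\croch{e^{tX}}\E\croch{e^{-tY}}=e^{2\lambda(\cosh t-1)}$ for all $t\in\mathbb{R}$, and then to recover the absolute moment from a tail bound exactly as in the proofs of Lemma~\ref{lem.moment.gaussian} and Lemma~\ref{lem.binom.moment.upper.bound}. By symmetry $\P\croch{\abs{W}\ge u}=2\P\croch{W\ge u}$, so $\E\croch{\abs{W}^q}=2q\int_0^\infty u^{q-1}\P\croch{W\ge u}\,du$; the whole point is to feed a good exponential tail into this identity.

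First I would derive a Bernstein-type tail from Chernoff's inequality, $\P\croch{W\ge u}\le e^{-tu}e^{2\lambda(\cosh t-1)}$ for every $t>0$. Using the elementary inequality $\cosh t-1\le \frac{t^2/2}{1-t^2/6}$ (valid for $0<t<\sqrt6$) together with $\frac{1}{1-t^2/6}\le\frac{1}{1-t/\sqrt6}$ on the same range, the cumulant is controlled by a sub-gamma profile with variance factor $v=2\lambda$ and scale $b=1/\sqrt6$, namely $2\lambda(\cosh t-1)\le\frac{\lambda t^2}{1-t/\sqrt6}=\frac{v t^2}{2(1-bt)}$. Optimising over $t$ then yields $\P\croch{W\ge u}\le\exp\paren{-\frac{u^2}{2(2\lambda+u/\sqrt6)}}$, a tail that carries both relevant regimes: a Gaussian one with variance proxy $2\lambda$ for $u\lesssim\lambda$, and an exponential one for $u\gtrsim\lambda$.

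I would then split the moment integral at the crossover $u_0\sim 2\sqrt6\,\lambda$. On $\croch{0,u_0}$ the denominator is at most $8\lambda$, so the tail is dominated by $e^{-u^2/(8\lambda)}$, and the substitution $u=2\sqrt{\lambda}\,w$ turns this piece into a Gaussian integral $\int w^{q-1}e^{-w^2/2}\,dw$; this is exactly the quantity evaluated (through the even/odd case distinction) in the proof of Lemma~\ref{lem.moment.gaussian}, and it produces the sharp factor $(q/e)^{q/2}$ multiplied by $(2\lambda)^{q/2}$. On $\croch{u_0,\infty}$ the exponent is at least $\frac{\sqrt6}{4}u$, so this piece is bounded by $2q\,(4/\sqrt6)^q\,\Gamma(q)=2\,(4/\sqrt6)^q\,q!$, which after Stirling contributes the complementary term $q^{q/2}(q/e)^{q/2}$. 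Finally, invoking the superadditivity $(2\lambda)^{q/2}+q^{q/2}\le(2\lambda+q)^{q/2}$ (valid since $q/2\ge1$) merges the two contributions into the announced $\croch{\frac{q}{e}(2\lambda+q)}^{q/2}$, and collecting the numerical constants gives the prefactor $2^{q/2+1}e\sqrt q$.

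The main obstacle is the bookkeeping of constants in this last step: each regime must be bounded so that the shared factor stays below $2^{q/2}$ times the Gaussian and Poissonian scales, which is what forces the precise sub-gamma scale $b=1/\sqrt6$ (one only needs $4/\sqrt6\le\sqrt{2e}$-type slack, so this choice leaves room to spare). A useful sanity check for the appearance of the variance proxy $2\lambda$ is the conditioning representation: with $N:=X+Y\sim\mathcal{P}(2\lambda)$ one has $X-Y=2(X-N/2)$ where $X\mid N\sim\mathcal{B}(N,1/2)$, so Lemma~\ref{lem.binom.moment.upper.bound} applied conditionally and averaged over $N$ reproduces the same $2\lambda$ and the same $(q/e)^{q/2}$ scaling, confirming the shape of the bound even though the constants it yields are slightly weaker than those obtained by the direct tail computation above.
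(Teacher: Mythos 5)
Your argument is correct, but it is not the route the paper takes. The paper's proof is exactly the ``sanity check'' you relegate to your last paragraph: it writes $X-Y=2\paren{X-N/2}$ with $N=X+Y\sim\mathcal{P}(2\lambda)$, applies Lemma~\ref{lem.binom.moment.upper.bound} conditionally on $N$ (using that $X\mid N\sim\mathcal{B}(N,1/2)$), and then controls $\E_N\croch{N^{q/2}}$ via Jensen's inequality and the Touchard-polynomial bound $\E\croch{N^{q}}\leq \tfrac12\paren{2\lambda+q}^{q}$, which is where the $\paren{2\lambda+q}^{q/2}$ factor comes from. Your main route instead works directly from the exact moment generating function $e^{2\lambda(\cosh t-1)}$, dominates the cumulant by a sub-gamma profile with $v=2\lambda$, $b=1/\sqrt6$, converts the resulting two-regime tail into moments by splitting the integral at $u_0\sim 2\sqrt6\,\lambda$, and recombines via $(2\lambda)^{q/2}+q^{q/2}\leq(2\lambda+q)^{q/2}$. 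I checked the pieces: the domination $\cosh t-1\leq\frac{t^2/2}{1-t^2/6}$ holds termwise since $(2k)!\geq 2\cdot 6^{k-1}$; the Gaussian regime yields at most $2^{q/2+1}e\sqrt q\,(q/e)^{q/2}(2\lambda)^{q/2}$ (using $\sqrt\pi\leq e$ in Stirling); the exponential regime yields $2(4/\sqrt6)^q q!\leq 2^{q/2+1}e\sqrt q\,(q/e)^{q/2}q^{q/2}$ since $16/(6e)<1$; and the superadditivity step is valid because $q/2\geq1$. So the constants do close, with slack, as you claim. What each approach buys: the paper's conditioning argument is modular (it recycles the binomial lemma and needs no Chernoff optimization for the difference variable itself), whereas yours is self-contained, exposes the sub-gamma structure of $X-Y$ explicitly, and would generalize immediately to other infinitely divisible differences; its only cost is that you must actually carry out the Chernoff optimization giving $\P\croch{W\geq u}\leq\exp\paren{-\frac{u^2}{2(2\lambda+u/\sqrt6)}}$ (standard, via $\sqrt{1-s}+s/2\leq1$), a step you assert rather than write out.
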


\begin{proof}[Proof of Lemma~\ref{lem.diff.poisson}]
Let us first remark that
\begin{align*}
  \E\croch{ \abs{ X - Y}^q } & = \E_N\croch{ \E\croch{ \abs{ X - Y}^q \mid N} }  = 2^q \E_N\croch{ \E\croch{ \abs{ X - N/2 }^q \mid N} } ,
\end{align*}
 where $N=X+Y$.
Furthermore, the conditional distribution of $X$ given $N=X+Y$ is a binomial distribution $\mathcal{B}(N,1/2)$.
Then Lemma~\ref{lem.binom.moment.upper.bound} provides that
\begin{align*}
  \E\croch{ \abs{ X - N/2 }^q \mid N} & \leq 4\sqrt{ e } \sqrt{q } \sqrt{\frac{qN}{2e}}^{q} \qquad a.s.\,,
\end{align*}
which entails that
\begin{align*}
  \E\croch{ \abs{ X - Y}^q } & \leq 2^q \E_N\croch{4\sqrt{ e } \sqrt{q } \sqrt{\frac{qN}{2e}}^{q}} = 2^{q/2+2} \sqrt{ e } \sqrt{q} \sqrt{\frac{q}{e}}^{q} \E_N\croch{ N^{q/2} } .
\end{align*}
It only remains to upper bound the last expectation where $N$ is a Poisson random variable $\mathcal{P}(2\lambda)$ (since $X,Y$ are \iid):
\begin{align*}
  \E_N\croch{ N^{q/2} } & \leq \sqrt{  \E_N\croch{ N^{q} } }
\end{align*}
by Jensen's inequality.
Further introducing Touchard polynomials and using a classical upper bound, it comes
\begin{align*}
  \E_N\croch{ N^{q/2} } & \leq \sqrt{ \sum_{i=1}^q (2\lambda)^i  \frac{1}{2} {q\choose i } i^{q-i} } \leq \sqrt{ \sum_{i=0}^q (2\lambda)^i  \frac{1}{2} {q\choose i } q^{q-i} } \\
& = \sqrt{ \frac{1}{2} \sum_{i=0}^q {q\choose i } (2\lambda)^i    q^{q-i} } =  \sqrt{ \frac{1}{2} \paren{2\lambda + q }^q }  =   2^{\frac{-1}{2}} \paren{2\lambda + q }^{q/2} .
\end{align*}
Finally, one concludes
\begin{align*}
\E\croch{ \abs{ X - Y}^q } & \leq
 2^{q/2+2} \sqrt{ e } \sqrt{q} \sqrt{\frac{q}{e}}^{q}     2^{\frac{-1}{2}} \paren{2\lambda + q }^{q/2} <
 2^{q/2+1} e \sqrt{q} \croch{\frac{q}{e} \paren{2\lambda + q} }^{q/2} .
\end{align*}

\end{proof}

\end{document}